\documentclass[11pt,reqno]{amsart}

\usepackage[letterpaper,margin=1.15in]{geometry}
\usepackage{amsmath,amssymb,amsthm,mathtools,mathrsfs}
\usepackage{microtype}
\usepackage[colorlinks=true,linkcolor=blue,citecolor=blue,urlcolor=blue]{hyperref}
\usepackage[nameinlink,capitalise,noabbrev]{cleveref}
\usepackage{enumitem}
\usepackage{mlmodern}
\usepackage{fix-cm}
\usepackage{ulem}

\usepackage{xcolor}

\numberwithin{equation}{section}

\newtheorem{theorem}{Theorem}[section]
\newtheorem{proposition}[theorem]{Proposition}
\newtheorem{lemma}[theorem]{Lemma}
\newtheorem{corollary}[theorem]{Corollary}
\theoremstyle{definition}
\newtheorem{definition}[theorem]{Definition}

\theoremstyle{remark}
\newtheorem{remark}[theorem]{Remark}

\DeclareMathOperator{\divv}{div_{\textit{v}}}
\DeclareMathOperator{\supp}{supp}
\DeclareMathOperator{\sgn}{sgn}
\newcommand{\R}{\mathbb R}
\newcommand{\K}{\mathscr K}
\newcommand{\A}{\mathscr A}
\newcommand{\M}{\mathcal M}
\newcommand{\W}{\mathbb W}
\newcommand{\cC}{\mathcal C}

\newcommand{\cS}{\mathcal S}
\newcommand{\norm}[1]{\lVert#1\rVert}
\newcommand{\abs}[1]{\lvert#1\rvert}
\newcommand{\pair}[2]{\langle #1,#2\rangle}
\newcommand{\eps}{\varepsilon}

\title[Kinetic equations with singular Morrey-type drift]
{Kinetic SDEs and Kinetic Fokker-Planck equations with critical Morrey-type drifts, and local blow-ups}

\author{Jean-Fran\c{c}ois Jabir}

\address{ HSE University, Department of Statistics and Data Analysis and LSA, 
 Moscow, Russia.}
\email{jjabir@hse.ru}

\author{Damir Kinzebulatov}

\address{Universit\'{e} Laval, D\'{e}partement de math\'{e}matiques et de statistique, Qu\'{e}bec, QC, Canada}

\email{damir.kinzebulatov@mat.ulaval.ca}

\author{Chengjun Yue}

\address{Universit\'{e} Laval, D\'{e}partement de math\'{e}matiques et de statistique, Qu\'{e}bec, QC, Canada}

\email{chengjun.yue.1@ulaval.ca}

\thanks{The research of J.-F. J. is supported by the Russian Science Foundation project (project No. 24-11-00123). The research of D.K. is supported by NSERC Discovery grant (RGPIN-2024-04236). The research of C.\,Y.\,is supported by the CRM-Laval postdoctoral fellowship.}

\keywords{Kinetic Fokker-Planck equation; Morrey-type singular drift; Adams estimates; Hypo-elliptic PDEs; Singular kinetic SDEs; 
	 Feller propagator}

\begin{document}

\begin{abstract}
We develop a Sobolev theory for the kinetic
Fokker-Planck equation subject to a singular Morrey-form $L^\infty+\M^q_{\K}$ force field which can produce local blow-ups due to strong attraction to a submanifold.
Our approach is based on the fractional representation for the Duhamel series of the equation and on establishing 
kinetic Adams-type estimates. Accordingly, we construct weak solutions of  Brownian-driven kinetic SDEs in a critical Morrey set-up,  obtain kinetic Krylov-type estimates and a strongly continuous Feller propagator, and establish 
uniqueness criteria for a large class of weak solutions.
\end{abstract}

\maketitle

\fontsize{10.4pt}{4.4mm}\selectfont

\setcounter{tocdepth}{1}

\tableofcontents

\section{Introduction}\label{sec:intro}

\subsection{Brief overview of the paper.}   Kinetic equations describe the statistical evolution of inertial particles subject to environmental noise. These equations 
arise naturally in Vlasov systems in statistical mechanics and the study of Langevin-type dynamics \cite{Langevin-08}. Notably, related SDEs generically correspond to stochastic Newtonian dynamics $$\ddot{X}_t=F_t(X_t,\dot{X}_t)+\sqrt{2}\dot{W}_t$$ which arise in models of molecular motion, electromagnetic phenomena, turbulence, social or economic multi-agent systems as well as in deep learning methods (we may refer to \cite{Bossy-05}, \cite{Serfaty-26}, \cite{ParTos-13}, \cite{ChChBaJo-18} and references therein).

The goal of this paper is to develop a Sobolev theory for the forward equation
\begin{equation}\label{eq:forward-intro}
 \lambda\eta(t,x,v)+\K\eta(t,x,v)-\divv(b\eta)(t,x,v)=f{(t,x,v)},
 \qquad(t,x,v)\text{ in }\R^{1+2d},\quad  d \geq 2,
\end{equation}
with the  kinetic
forward Kolmogorov operator
\begin{equation*}
 \K:=\partial_t+v\cdot\nabla_x-\Delta_v.
\end{equation*}
Here
the diffusion acts only in velocity; however,
transport couples velocity and position, and the commutator
identity
\[
 [\partial_{v_j},\partial_t+v\cdot\nabla_x]=\partial_{x_j}
\]
shows how velocity regularization is transmitted to the position
variables. 
The drift $b:\R^{1+2d}\rightarrow\R^d$ is Borel measurable and may have critical
singularities in time, position, and velocity, i.e.\,can introduce singular attractive forces and irregular alignment interactions. We then apply our Sobolev theory to develop weak  solution and Feller theory for the  
kinetic stochastic
system
\begin{equation}\label{eq:sde-intro}
\left\{
\begin{array}{l}
  dX_t=V_t\,dt,\\[2mm]
  dV_t=-b(t,X_t,V_t)\,dt+\sqrt2\,dW_t, \quad (X_0,V_0)=(x_0,v_0) \in \mathbb R^{2d}.
 \end{array}
 \right.
\end{equation}

Recall that the kinetic dilation related to $\K$ is $(t,x,v)\longmapsto (r^2t,r^3x,rv)$,
and so the
dimension of space-time is
\begin{equation}\label{eq:Q}
 Q:=2+3d+d=4d+2.
\end{equation}
The corresponding definition of the kinetic cylinder\footnote{$\cC_r(t,x,v)$ has measure comparable with $r^Q$.} is
\begin{equation}\label{eq:backward-cylinder}
	\begin{split}
		\cC_r(t,x,v):=\bigl\{(s,y,w):{}&-r^2<s-t<0,\ \abs{w-v}<r,\ \abs{y-x-(s-t)v}<r^3\bigr\},
	\end{split}
\end{equation}	
which determines (see also Section \ref{sec:geometry}) the Morrey class $g\in \M^q_{\K},\,q>1$, with
\begin{equation}\label{kinetic-Morrey}
g\in L^q_{loc}(\R^{1+2d}),\ \norm{g}_{\M^q_{\K}}
	:=\sup_{(t,x,v)\in\R^{1+2d},\ r>0}
	r\left(\frac1{\abs{\cC_r(t,x,v)}}
	\int_{\cC_r(t,x,v)}\abs{g(\xi)}^q\,d\xi\right)^{1/q}<\infty.
\end{equation}
This class is critical in the sense that it is invariant under the kinetic scaling $g_r(t,x,v)=r g(r^2t,r^3x,rv).$

Our main hypothesis on the drift component $b$ in \eqref{eq:forward-intro} and \eqref{eq:sde-intro} consists of the class 
\begin{equation}\label{Morrey-drift}
 b=b_{\mathrm s}+b_{\mathrm b},
 \qquad b_{\mathrm s}\in\M^q_{\K},\qquad
 b_{\mathrm b}\in L^\infty(\R^{1+2d},\R^d),\tag{$\mathrm{H}_{\K}$}
\end{equation}
with Morrey norm $\norm{b_{\mathrm s}}_{\M^q_{\K}}$ required to be sufficiently small.  The Morrey class $\M^q_{\K}$ becomes larger as $q$ gets smaller, so it is reasonable to consider 
$q$ close to $1$ (thus, up to the strict threshold 
 $q>1$, we are dealing with the largest possible critical Morrey class). Fixing $q$ determines also how small $\norm{b_{\mathrm s}}_{\M^q_{\K}}$ must be; this 
 smallness condition on the Morrey norm turns out to be 
  necessary  for \eqref{eq:sde-intro}: Proposition~\ref{prop:hardy-nonexistence} gives a
counterexample to weak existence if the Morrey norm of the drift is too large.

Our class $\M^q_{\K}+L^\infty$ of drifts is general in the sense that it does not presuppose any specific structure of the drift, such as being divergence-free or having gradient form. In particular, it is invariant under $L^\infty$ multipliers. This provides us with flexible means for modifying and constructing  interaction kernels in particle systems with attracting interactions, as we may briefly illustrate at once.

\subsection*{Examples}
	The kinetic Morrey class $\M^q_{\K}$ contains 
	the following vector fields:

1.~Let \(e\in\mathbb R^d\) be fixed. If \(1<q<2\), then, for 
 \(t_0\in\mathbb R\),
		\[
		b(t,x,v)=\kappa |t-t_0|^{-1/2}e
		\in\mathcal M_{\mathcal K}^q.
		\]

If \(1<q<d\), then
		\[
		b(t,x,v)=\kappa\frac{v}{|v|^2}
		\in\mathcal M_{\mathcal K}^q,
		\]

If \(1<q<3d\), then, for 
 some reference point \(x_0\in\mathbb R^d\),
		\[
		b(t,x,v)=\kappa |x-x_0|^{-1/3}e
		\in\mathcal M_{\mathcal K}^q,
		\]
the proof of the three above inclusions is in Proposition \ref{prop:many-particle}. These previous examples can be combined  i.e.\,we can add them up (so that e.g.\,the set of singularities constitutes a dense subset of $\mathbb R^{1+2d}$) or multiply by functions in $L^\infty$. 
Multiplying $b$ by a constant $\kappa>0$ scales its Morrey norm by $\kappa$.

\smallskip
		
2.~If $|b|$ is in the Lebesgue class $L^{p_t}\bigl(\mathbb R_t,
       L^{p_v}(\mathbb R_v^d, L^{p_x}(\mathbb R_x^d))\bigr)$ (i.e. $\int \big(\int\|b(s,\cdot,v)\|^{p_v}_{L^{p_x}}\,dv\big)^{p_t/p_v}\,ds<\infty$) with $p_t,p_x,p_v\in(1,\infty]$ satisfying
		\[
 \frac{2}{p_t}+\frac{d}{p_v}+\frac{3d}{p_x}=1,
\]
then it is in a 
 Morrey class (Proposition \ref{prop:lebesgue-class}).

\smallskip

3.~A motivation for allowing local blow-ups of the drift comes from interacting particle systems with singular attractive forces. Indeed, a system of \(N\) Langevin-type particles in the physical dimension \(d_0\) can be viewed as a single kinetic system in dimension \(d=d_0N\) with $b$ modeling internal forces. For example, the (time-homogeneous) pairwise 
interaction drifts of the form 
$$
b=(b^1,\dots,b^N):\mathbb R^{d_0N} \rightarrow \mathbb R^{d_0N}, \qquad b^i:\mathbb R^{d_0N} \rightarrow \mathbb R^{d_0},
$$
where
$$
 b^i(x)=\frac{1}{N}\sum_{j=1, j\ne i}^N\kappa_{ij}
 \frac{x^i-x^j}{|x^i-x^j|^{4/3}}, \quad x=(x^1,\dots,x^N) \in \mathbb R^{d_0N}
$$
or
$$
  b^i(v)=\frac{1}{N}\sum_{j=1, j\ne i}^N \gamma_{ij}
 \frac{v^i-v^j}{|v^i-v^j|^2}, \quad v=(v^1,\dots,v^N) \in \mathbb R^{d_0N},
$$
and where $1 \leq i \leq N$, $\kappa_{ij}$, $\gamma_{ij} \geq 0$ and both drifts are set to zero at the collisions,
have kinetic degree \(-1\) and therefore exhibit the critical behaviour considered in this paper.  The pair-collision sets \(\{x^i=x^j\}\) and the velocity-synchronization sets \(\{v^i=v^j\}\) are then submanifolds of the phase space $\mathbb R^{2d}$. 
Other examples 
are the many-particle drifts $b^i(x,v)=\frac{1}{N}\sum_{j=1,j \neq i}^N K(x^i-x^j,v^i-v^j)$ with 
$$
K(\xi,\nu)=\kappa|\xi|^{-\frac{1}{3}}\frac{\nu}{1+|\nu|^2},\qquad{\xi,\nu\in\mathbb R^{d_0}},
$$
or more broadly the family of interaction kernels
	$$
	b^i(x,v)=\frac{1}{N}\sum_{j=1, j\ne i}^N a_{ij}(x^i-x^j,v^i-v^j) \biggl[ \kappa_{ij}\frac{x^i-x^j}{|x^i-x^j|^{4/3}} +  \gamma_{ij}\frac{v^i-v^j}{|v^i-v^j|^2} \biggr],
	$$
	where $a_{ij}$ are bounded measurable.
These can be viewed as extended singular versions of the characteristic interaction forces leading to swarming at large time (see \cite{CattiauxDelebecquePedeches} and references therein). The $a_{ij}$'s model non-negative communication rates (essentially acting for close-by particles) which, with a velocity drag force, introduce 
velocity alignment and agglomeration in compact clouds of particles at large time.  This phenomenon is particularly known in the case  $a_{i,j}(x^i-x^j,v^i-v^j)\frac{v^i-v^j}{|v^i-v^j|^2}=\frac{v^i-v^j}{(1+|x^i-x^j|^2)^\kappa}$ in the deterministic and stochastic cases. Introducing a further concentrating force $\frac{v^i-v^j}{|v^i-v^j|^2}$ (assuming e.g. the communication rates are bounded) introduces a stronger form of velocity alignment, and the additional force $\frac{x^i-x^j}{|x^i-x^j|^{4/3}}$ a faster agglomeration of the dynamics. Although large-time behaviors are not discussed in our paper, such questions may be re-addressed in future works.

 The inclusion of the above many-particle drifts in our Morrey class is proved in Proposition \ref{prop:many-particle}. Let us also add that the possibility to select $q>1$ close to $1$ in our Morrey class hypothesis on the drift  is important since it allows us to handle the above many-particle drifts in dimension $d_0=2$.

\smallskip

 Our results thus provide a wellposedness 
  and Feller theory for such stochastic particle systems when the Morrey norm of the attractive interaction is sufficiently small. Conversely, the nonexistence counterexample exhibited in Proposition \ref{prop:hardy-nonexistence} shows that a sufficiently strong inward Hardy-type position-velocity concentrating force 
   can drive the process into the singular set and prevent its continuation as a weak solution. Let us finally add that our condition on the Morrey norm is $d$-dependent, and does not yield weak solution theory of particle systems with an $N$-independent condition on the strength of interaction; in the non-degenerate case, however, something can be said about such general critical interactions when $d_0 \geq 3$ \cite{Ki_AIHP}.

\subsection{Main contributions of the paper} Our first main result, Theorem \ref{thm:main}, develops the Sobolev theory of \eqref{eq:forward-intro} by constructing the Duhamel series for  \eqref{eq:forward-intro} in terms of the fractional powers of
$\lambda+\K$ and $b$. To ensure convergence of the Duhamel series, we obtain Adams-type inequalities \cite{Adams2} on the kinetic homogeneous group: for $1<p<\infty$,
\begin{align*}
 \norm{\abs b^{1/p}(\lambda+\K)^{-1/(2p)}}_{p\rightarrow p}
 &\lesssim \norm b_{\M^q_{\K}}^{1/p},\\
 \norm{\abs b^{1/p}\nabla_v
        (\lambda+\K)^{-1/2-1/(2p)}}_{p\rightarrow p}
 &\lesssim \norm b_{\M^q_{\K}}^{1/p}.
\end{align*}
This yields 
the regularity range in \eqref{eq:forward-intro}:
\[
 \eta\in\W_{\K}^{1/p,p},
 \qquad
 f\in\W_{\K}^{-1-1/p',p},\qquad \frac1{p}+\frac1{p'}=1,
\]
(see \eqref{kinetic-Sobolev} for the definition of $\W^{\gamma,p}_{\K}$) and uniqueness in the corresponding weak class.  Since $Q=4d+2$, a point
mass distribution in $\mathbb R^{1+2d}$ 
belongs to the right-hand-side space whenever
\begin{equation*}
 1<p<\frac{Q-1}{Q-2}=\frac{4d+1}{4d},
\end{equation*}
and so,  choosing appropriately the source term $f$ we can consider \eqref{eq:forward-intro} with a Dirac delta-function in the initial condition.

The second main result, Theorem \ref{thm:sde}, concerns the kinetic SDE 
\eqref{eq:sde-intro}.
Our PDE theory provides kinetic Krylov estimates, existence of a weak solution, conditional weak uniqueness in the Krylov-type class and produces a
strongly continuous backward Feller propagator.

\subsection{Our approach}
1.~The approach to the Sobolev theory of 
Kolmogorov operators via the fractional representation of the Duhamel/Neumann series appeared in the non-degenerate case in \cite{Ki_ASNS}. 
Therein the second-named author 
developed regularity theory of elliptic operator $-\Delta + b \cdot \nabla$ with drift $b$ in the class of weakly form-bounded drifts that is, in fact, larger than the corresponding non-degenerate Morrey class with $q>1$ close to $1$. 
This approach can be viewed as a rather far-reaching extension of the idea
behind the Birman-Schwinger principle for Schr\"odinger operators and is  related to  much earlier work of Sem\"{e}nov \cite{BS} on obtaining characterization of the regularity of the eigenfunctions by working in $L^p$.

The Sobolev theory of \cite{Ki_ASNS} and the ensuing Feller theory were used in \cite{KiS_SPA} to prove for the associated non-degenerate  SDEs  weak existence and uniqueness among all weak solutions 
constructed via regularization of the drift. 
 A parabolic version of \cite{Ki_ASNS, KiS_SPA} was developed in the non-degenerate setting for
time-inhomogeneous Morrey drifts in \cite{Ki_NoDEA} and extended to
stable-driven 
 SDEs in \cite{KiY}.

2.~We prove conditional weak uniqueness for the kinetic stochastic system \eqref{eq:sde-intro} in the class of weak solutions satisfying Krylov estimates ($=$ reasonable occupation estimates). This result is not contained in \cite{KiS_SPA} -- indeed, it requires a parabolic argument -- and has been proved earlier 
in the nondegenerate setting for distributional drifts
\cite{ZhangZhaoDistributional}, Lebesgue class drifts
\cite{XiaXieZhangZhao}, and weak Lebesgue class drifts \cite{RoecknerZhaoWeak}. We also refer to Krylov \cite{Krylov3} for recent results in the non-degenerate setting on weak existence and conditional uniqueness for SDEs with drift in the Morrey class with $q>c_d$, $c_d>\frac{d}{2}$, and a VMO diffusion matrix.

3.~The maximal-function arguments that we use to prove kinetic Adams estimate build on works
\cite{AM, CoifmanRochberg, DH, Heinonen} that deal with a more general setting of 
spaces of homogeneous type.

\subsection{Literature on kinetic Kolmogorov equations and SDEs}

The first result on the operator $\K$ can be traced back 
to Kolmogorov 
\cite{Kolmogoroff} who gave the explicit Gaussian transition density
of integrated Brownian motion and exhibited smoothing despite the absence
of diffusion in the position variables.  H\"ormander's theorem
\cite{Hormander} later placed this phenomenon in a general framework: the
velocity diffusion fields and their first commutators with the transport
field generate all spatial directions.  The intrinsic geometry is therefore
governed by noncommutative space--time translations and anisotropic
dilations rather than by the Euclidean parabolic structure.  Garofalo and
Lanconelli \cite{GarofaloLanconelli} related the level sets of the
fundamental solution to invariant Harnack inequalities, and Lanconelli and
Polidoro \cite{LanconelliPolidoro} developed the associated Lie-group and
dilation structure for a broad class of ultraparabolic Kolmogorov operators.  These ideas
led, among other results, to global lower and Gaussian upper bounds for
fundamental solutions \cite{Polidoro1997,PascucciPolidoro2003}.  We refer
to \cite{AnceschiPolidoro} for a survey of this classical theory.

The regularity theory has developed along several lines.
The
intrinsic Sobolev--Morrey theory of Polidoro and Ragusa
\cite{PolidoroRagusa} was followed by variational kinetic Sobolev
frameworks \cite{AAMN}, kinetic maximal $L^p$ estimates
\cite{NiebelZacher,DongYastrzhembskiy}, and sharp intrinsic Sobolev
embeddings \cite{PascucciPesce,AuscherNiebel}. 
Golse, Imbert, Mouhot and Vasseur extended De Giorgi--Nash--Moser methods
to kinetic Fokker--Planck equations with bounded measurable principal
coefficients, obtaining H\"older regularity and a Harnack inequality
\cite{GIMV}.  Fundamental solutions and Gaussian estimates for variable
or rough principal coefficients have also been constructed, see, for
example, \cite{AuscherImbertNiebel}.  These works principally concern
roughness of the diffusion matrix, local regularity of already constructed
weak solutions, or lower-order terms with substantially more
integrability than those considered below.

For kinetic SDEs and their backward Kolmogorov equations, H\"older
assumptions on the degenerate drift yield weak and strong well-posedness,
with sharp or nearly sharp thresholds in several models
\cite{ChaudruStrong,ChaudruWeak}.  Mixed Sobolev assumptions were treated
in \cite{FedrizziEtAl,ZhangHamiltonian}. In particular, these works use
the smoothing properties of the Kolmogorov equation to construct regular
stochastic flows or to prove well-posedness of the associated martingale
problem.  Heat kernel and gradient estimates under low-regularity
assumptions were obtained in \cite{ChaudruMenozziPesceZhang}.  Singular
coefficients of distributional order have also been treated by
paracontrolled methods in \cite{HaoZhangZhuZhu}.  More recently,
\cite{ChenHaoZhang} established weak well-posedness for time-dependent
distribution-valued drifts in weighted anisotropic H\"older spaces under a
strict subcritical condition and the assumption that $\divv b$ is
bounded.  

Uniqueness of probability solutions to
Fokker--Planck--Kolmogorov equations with partially degenerate
diffusion matrices and unbounded coefficients was studied,
under suitable additional assumptions, by Bogachev,
R\"ockner and Shaposhnikov~\cite{BRS2015}.

Singular interaction kernels depending jointly on position and
velocity also arise in the work of Hao, Jabir, Menozzi, R\"ockner
and Zhang~\cite{HJMRZ2024}, who establish quantitative propagation
of chaos for moderately interacting particle approximations of
kinetic McKean--Vlasov equations with interaction kernels in
anisotropic Besov spaces.

For mixed-Lebesgue drifts in the strictly subcritical range, maximum
principles and kinetic Krylov estimates leading to weak existence for
even density-dependent kinetic SDEs were established in
\cite{ZhangMcKean}.  The scaling-critical Hardy fields considered below
are outside those mixed-Lebesgue hypotheses.

The closest result to the present work is that of Ren and Zhang
\cite{RenZhang}.  For kinetic SDEs with H\"older diffusion and an
unbounded velocity drift in an intrinsic kinetic Kato class, they
construct a weak solution and prove uniqueness within a class requiring
the one-time marginal laws to be absolutely continuous for almost every time.  They also
obtain sharp two-sided heat kernel bounds under a strengthened Kato
assumption.  Their hypotheses include mixed Lebesgue drifts in the
strictly subcritical range
\[
 \frac{2}{p_t}+\frac{d}{p_v}+\frac{3d}{p_x}<1.
\]
The present paper addresses a different scaling-critical endpoint.  A
drift of kinetic degree $-1$, such as e.g.\,Lebesgue drift
\[
 \frac{2}{p_t}+\frac{d}{p_v}+\frac{3d}{p_x}=1
\]
or having even stronger singularity, such as
$|v|^{-1}$, $|t-t_0|^{-1/2}$, or $|x-x_0|^{-1/3}$, does not belong to the Kato class, but it
belongs to the critical kinetic Morrey class of this work. Our drifts are not required to be divergence-free or have gradient structure.

Let us add that, very recently, Chen and Ren \cite{ChenRen} obtained PDE well-posedness for
anisotropic mixed-Lebesgue drifts in a scaling-supercritical range by
exploiting the cancellation $\divv b=0$.  This result is complementary to
ours since its cancellation mechanism is unavailable for attracting drifts, or more generally the drifts
considered in the present paper.

\subsection*{Notations}

\begin{itemize}
\item We write
\[
 z=(x,v)\in\R^{2d},\qquad
 \zeta=(t,z)=(t,x,v)\in\R^{1+2d},\qquad
 \xi=(s,y,w)\in\R^{1+2d},
\]
and use
\[
 dz=dx\,dv,\qquad d\zeta=dt\,dx\,dv.
\]

\item The symbol $\abs{\cdot}$ denotes the Euclidean norm when applied to a
vector and the Lebesgue measure when applied to a measurable set.  We write
$\mathbf 1_E$ for the indicator function of a measurable set $E$.  When
acting on a space--time function, $\mathbf 1_{[s,T]}$ means
$\mathbf 1_{[s,T]\times\R^{2d}}$.

\item $L^p$ denotes the Lebesgue space
\[
 L^p(\R^{1+2d},d\zeta),
 \qquad
 \norm{f}_p:=\norm{f}_{L^p(\R^{1+2d})}.
\]
The same notation is used for finite-dimensional vector-valued functions,
with $\abs{f}$ denoting their Euclidean norm.  We write
$L^p_{\rm loc}$ for the space of measurable functions $f$ such that
\[
 \norm{\mathbf 1_Kf}_p<\infty
 \qquad\text{for every compact }K\subset\R^{1+2d}.
\]
The kinetic metric $d_\ast(\zeta,\xi)$ introduced below (see \eqref{eq:dstar}) induces the usual Euclidean topology on $\mathbb R^{1+2d}$, so this definition is equivalent to requiring local
$L^p$-integrability on every kinetic ball.

\item For $m\ge1$, 
\[
 C_\infty(\R^m)
 :=
 \left\{
 f\in C_b(\R^m):
 \lim_{\abs y\to\infty}f(y)=0
 \right\},
\]
endowed with the supremum norm. 

\item $C_c^\infty(\R^m)$ denotes
the space of smooth compactly supported functions.  

\item $\cS(\R^m)$ denotes the Schwartz space (i.e. the set of all $f\in C^\infty$ such that $\sup_{y\in\R^m}|y|^l|D^{\alpha}f(y)|<\infty$ for all $l\ge 1$, $\alpha\in\mathbb N^{m}$).

\item For $1\le p\le\infty$, the conjugate exponent is denoted by $p'$:
\[
 \frac1p+\frac1{p'}=1.
\]

\item If $T:L^p\rightarrow L^r$ is bounded, we write
$
 \norm{T}_{p\rightarrow r}
 :=
 \sup_{\norm f_p\le1}\norm{Tf}_r.
$
In particular, $\norm{T}_{p\rightarrow p}$ denotes the operator norm of $T$ on
$L^p$.

\item For brevity, we often make use of the notation
$$
\langle h\rangle:=\int_{\mathbb R^{1+2d}}h\,d\zeta, \quad \langle f,g\rangle:=\langle fg \rangle. $$
Throughout this paper, all functions and vector
fields are real-valued.

\item We write
$
 A\lesssim B
$
when $A\le CB$ for some positive 
 constant $C$. Constants denoted by $c$ or $C$ may change from line to line. 

\item For $f\in C([s,T], \R^m)$, we define the 
uniform modulus of
continuity by
\[
 \mathfrak w_{s,T}(f,\delta)
 :=
 \sup_{\substack{s\le u,t\le T\\\abs{t-u}\le\delta}}
 \abs{f(t)-f(u)}.
\]
When the interval $[s,T]$ is fixed, we write simply
$\mathfrak w(f,\delta)$.

\item $[ M,N]_t$ denotes the cross-variation bracket  between two continuous local martingales and $[M]_t=[M,M]_t$ the quadratic variation of $M$.

\item 
For locally finite Radon measures on a locally compact space,
vague convergence $\rightharpoonup$ means convergence of integrals against
every continuous compactly supported test function.

\item We also use $\rightharpoonup$ for weak convergence in function spaces.

\item Finally, we denote by $\Rightarrow$ weak convergence of probability measures i.e. for $\{\mu_n\}_n$ and $\mu$ probabilities on a metric space $E$,  
\[
\mu_n\Rightarrow\mu \quad\Leftrightarrow\quad \int_E f\,d\mu_n\rightarrow \int_E f\,d\mu
 \qquad\text{for every }f\in C_b(E).
\]
\end{itemize}

\bigskip

\section{Kinetic geometry and kinetic Bessel potentials}\label{sec:geometry}

\subsection{The kinetic group}
For $\zeta=(t,x,v)$ and $\xi=(s,y,w)$ in $\R^{1+2d}$, set the composition law
\begin{equation}\label{CompositionLaw}
 \zeta\circ\xi=(t+s,x+y+s v,v+w).
\end{equation}
Then the inverse element is given by
\[
 \zeta^{-1}=(-t,-x+tv,-v).
\]
Lebesgue measure is a left and right Haar measure, and the vector fields
\[
 Y:=\partial_t+v\cdot\nabla_x,
 \qquad X_j:=\partial_{v_j},\quad 1\le j\le d,
\]
are left invariant.  The maps
\begin{equation*}
 \delta_r(t,x,v):=(r^2t,r^3x,rv),\qquad r>0,
\end{equation*}
are group automorphisms.  Lebesgue measure scales by $r^Q$, with $Q$ as in
\eqref{eq:Q}, and $\K=Y-\sum_jX_j^2$ is homogeneous of degree two.

Define the left-invariant, generally non-symmetric, distance
\begin{equation}\label{eq:d}
 d(\zeta,\xi)
 :=\max\left\{
  \abs{t-s}^{1/2},\ \abs{w-v},\
  \abs{y-x-(s-t)v}^{1/3}\right\}.
\end{equation}
It is immediate from the group law that
$d(\zeta,\xi)=d(0,\zeta^{-1}\circ\xi)$.  It also satisfies the triangle
inequality. 
Indeed, 
let $\zeta=(t,x,v)$, $\theta=(r,z,u)$, $\xi=(s,y,w)$,
$a=d(\zeta,\theta)$ and $c=d(\theta,\xi)$. Then
\begin{align*}
	|y-x-(s-t)v|
	&\le |z-x-(r-t)v|+|y-z-(s-r)u|+|(s-r)(u-v)|\\
	&\le a^3+c^3+ac^2\le(a+c)^3.
\end{align*}
The time and velocity components are straightforward.
Moreover,
\begin{equation}\label{eq:reverse-comparable}
 2^{-1/3}d(\xi,\zeta)\le d(\zeta,\xi)\le2^{1/3}d(\xi,\zeta).
\end{equation}
Consequently,
\begin{equation}\label{eq:dstar}
 d_*(\zeta,\xi):=d(\zeta,\xi)+d(\xi,\zeta)
\end{equation}
is a left-invariant metric equivalent to $d$.  The corresponding balls
$B_r(\zeta):=\{\xi:d_*(\zeta,\xi)<r\}$ satisfy
\begin{equation}\label{eq:ball-volume}
 c_d r^Q\le\abs{B_r(\zeta)}\le C_d r^Q.
\end{equation}
Thus $(\R^{1+2d},d_*,dtdxdv)$ is a doubling space of homogeneous type.

With this set-up, we can define the Morrey class $\M^q_\K$ equally as in Section \ref{sec:intro}.  
\begin{definition}\label{def:morrey}
	Let $q>1$.  A Borel function $g$ belongs to the critical kinetic Morrey class
	$\M^q_{\K}$ if $g\in L^q_{\mathrm{loc}}$ and
	\begin{equation*}
		\norm g_{\M^q_{\K}}
		:=\sup_{\zeta\in\R^{1+2d},\ r>0}
		r\left(\frac1{\abs{B_r(\zeta)}}
		\int_{B_r(\zeta)}\abs{g(\xi)}^q\,d\xi\right)^{1/q}<\infty.
	\end{equation*}
\end{definition}
This class where backward cylinders $C_r$ defined in \eqref{eq:backward-cylinder} are replaced by $d_\ast$-balls $B_r$, is equivalent to the one introduced in \eqref{kinetic-Morrey}, with an equivalent norm. Indeed, recalling that \begin{equation*}
	\begin{split}
		\cC_r(t,x,v):=\bigl\{(s,y,w):{}&-r^2<s-t<0,\ \abs{w-v}<r,\ \abs{y-x-(s-t)v}<r^3\bigr\},
	\end{split}
\end{equation*}
we have
\[
\cC_r(t_0,x_0, v_0)\subset B_{3r}(t_0,x_0, v_0)
\]
and, conversely,
\[
B_r(t_0,x_0,v_0)
\subset
\cC_{\sqrt 2r}
\bigl(t_0+r^2,x_0+r^2v_0,v_0\bigr).
\]
\subsection{The kinetic Gaussian kernel}

Let $\{\mathsf P_\tau\}_{\tau\ge0}$ be the contraction semigroup on
$L^p(\R^{1+2d})$, $1\le p\le\infty$, generated by 
 $-\K$.
Explicitly,
\begin{equation}\label{eq:spacetime-semigroup}
 (\mathsf P_\tau f)(t,x,v)
 =\int_{\R^{2d}}p_\tau(x,v;y,w)f(t-\tau,y,w)\,dy\,dw,
\end{equation}
where
\begin{equation*}
 C_\tau=\begin{pmatrix}
  \tau I_d&-\frac{\tau^2}{2}I_d\\[2mm]
  -\frac{\tau^2}{2}I_d&\frac{\tau^3}{3}I_d
 \end{pmatrix},
 \qquad
 \Xi_\tau=\binom{w-v}{y-x+\tau v},
\end{equation*}
and
\begin{equation}\label{eq:heat-kernel}
 p_\tau(x,v;y,w)
 =\left(\frac{\sqrt3}{2\pi\tau^2}\right)^d
   \exp\left(-\frac14\pair{C_\tau^{-1}\Xi_\tau}{\Xi_\tau}\right).
\end{equation}
The above corresponds to the transition density of
\[
dX_r=-V_r\,dr, \quad dV_r=\sqrt2\,dW_r,
\]
started from $(x,v)$. It follows directly from
\eqref{eq:heat-kernel} that, for suitable positive
constants $c_i=c_i(d)$,
\begin{equation}\label{eq:gaussian-upper}
\begin{aligned}
 p_\tau(x,v;y,w)
 &\le c_1\tau^{-2d}
  \exp\left[-c_2\left(
   \frac{\abs{w-v}^2}{\tau}
  +\frac{\abs{y-x+\tau v}^2}{\tau^3}\right)\right],\\
 \abs{\nabla_v p_\tau(x,v;y,w)}
 &\le c_3\tau^{-1/2}\,\widetilde p_\tau(x,v;y,w),
\end{aligned}
\end{equation}
where $\widetilde p_\tau$ is a Gaussian of the same form as in the upper-bound with a smaller
constant in the exponential.  Analogous estimates hold for the adjoint
semigroup.

For $\lambda>0$ and $\K$ the forward Kolmogorov operator, define the operator $\A$ and its adjoint $\A^*$ as
\begin{equation*}
 \A:=\lambda+\K,
 \qquad \A^*=\lambda+\K^*=\lambda-\partial_t-v\cdot\nabla_x-\Delta_v.
\end{equation*}
(Although $\A$ depends on the choice of $\lambda$, for notation convenience, we make this dependency implicit in our writing.)

For $\gamma>0$, define the negative fractional powers by
\begin{equation}\label{eq:fractional-power}
 \A^{-\gamma/2}f
 =\frac1{\Gamma(\gamma/2)}
   \int_0^\infty \tau^{\gamma/2-1}e^{-\lambda\tau}
       \mathsf P_\tau f\,d\tau,
\end{equation}
and define $(\A^*)^{-\gamma/2}$  and $\A^{\gamma/2}=(\A^{-\gamma/2})^{-1}$ analogously. In particular,
\begin{equation*}
 \A^{-1}f(t,x,v)
 =\int_{-\infty}^t e^{-\lambda(t-s)}
  \int_{\R^{2d}}p_{t-s}(x,v;y,w)f(s,y,w)\,dy\,dw\,ds.
\end{equation*}
Fundamentals on fractional powers of operators, the specific derivations of $\A^{\gamma/2}$ and other key properties used throughout are collected in Appendix \ref{app:fractional-powers} and below.

In particular, we write $\A^{-\gamma/2}(\cdot,\xi):=\A^{-\gamma/2}\delta_{\{\xi\}}$
and $(\A^\ast)^{-\gamma/2}(\cdot,\xi):=(\A^\ast)^{-\gamma/2}\delta_{\{\xi\}}$,
where $\delta_{\{\xi\}}$ is the Dirac mass at $\xi\in\R^{1+2d}$.
\begin{proposition}[Fractional kernel bounds]\label{prop:kernels}
Let $0<\gamma<Q$, $1<r<\infty$, and $\lambda>0$.
The following assertions hold, with constants independent of $\lambda$ in
the pointwise estimates.
\begin{enumerate}[label=\textup{(\roman*)}]
\item The integral kernels of $\A^{-\gamma/2}$ and
$(\A^*)^{-\gamma/2}$ satisfy
\begin{equation}\label{eq:fractional-kernel-bound}
 \abs{\A^{-\gamma/2}(\zeta,\xi)}
 +\abs{(\A^*)^{-\gamma/2}(\zeta,\xi)}
 \le C_{d,\gamma}d_*(\zeta,\xi)^{\gamma-Q}.
\end{equation}
\item The differentiated kernels satisfy
\begin{equation}\label{eq:gradient-fractional-kernel-bound}
 \begin{split}
 &\abs{\nabla_v\A^{-(1+\gamma)/2}(\zeta,\xi)}\\
 &\qquad
 +\abs{\nabla_v(\A^*)^{-(1+\gamma)/2}(\zeta,\xi)}
 \le C_{d,\gamma}d_*(\zeta,\xi)^{\gamma-Q}.
 \end{split}
\end{equation}
\item One has
\begin{align}
 \norm{\A^{-\gamma/2}}_{r\rightarrow r}
 +\norm{(\A^*)^{-\gamma/2}}_{r\rightarrow r}
 &\le2\lambda^{-\gamma/2},\label{eq:power-Lp}\\
 \norm{\nabla_v\A^{-(1+\gamma)/2}}_{r\rightarrow r}
 +\norm{\nabla_v(\A^*)^{-(1+\gamma)/2}}_{r\rightarrow r}
 &\le C_{d,r,\gamma}\lambda^{-\gamma/2}.
 \label{eq:gradient-power-Lp}
\end{align}
\end{enumerate}
\end{proposition}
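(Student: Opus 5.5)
We would derive everything from the subordination formula \eqref{eq:fractional-power} together with the Gaussian bounds \eqref{eq:gaussian-upper}. Writing $\xi=(s,y,w)$ and $\zeta=(t,x,v)$, the kernel of $\A^{-\gamma/2}$ is
\[
 \A^{-\gamma/2}(\zeta,\xi)=\frac{\mathbf 1_{\{t>s\}}}{\Gamma(\gamma/2)}(t-s)^{\gamma/2-1}e^{-\lambda(t-s)}p_{t-s}(x,v;y,w),
\]
since the time variable is pure transport in $\mathsf P_\tau$: the $\tau$-integral collapses onto $\tau=t-s$. Dropping $e^{-\lambda\tau}\le1$ makes the pointwise constants independent of $\lambda$.

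\textbf{Part (i).} Insert \eqref{eq:gaussian-upper} with $\tau=t-s$. The exponent then controls
\[
 \frac{|w-v|^2}{\tau}+\frac{|y-x+\tau v|^2}{\tau^3}.
\]
Note that $y-x+\tau v=y-x-(s-t)v$ is exactly the third component of $d(\zeta,\xi)$. Set $\rho=d_*(\zeta,\xi)\simeq d(\zeta,\xi)$ by \eqref{eq:reverse-comparable}.
\begin{itemize}
\item If $\tau^{1/2}\gtrsim\rho$, the bound is $\tau^{\gamma/2-1-2d}=\tau^{(\gamma-Q)/2}\lesssim\rho^{\gamma-Q}$, using $Q=4d+2$ and $\gamma<Q$.
\item If $\tau^{1/2}\ll\rho$, then either $|w-v|\simeq\rho$ or $|y-x+\tau v|^{1/3}\simeq\rho$. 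The exponential factor is then $\le e^{-c(\rho^2/\tau)}$, and $\sup_\tau \tau^{(\gamma-Q)/2}e^{-c\rho^2/\tau}\lesssim\rho^{\gamma-Q}$.
\end{itemize}
The adjoint kernel is $\A^{-\gamma/2}(\xi,\zeta)$, so symmetry of $d_*$ gives the same bound.

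\textbf{Part (ii).} Differentiating in $v$ under the integral and using the second line of \eqref{eq:gaussian-upper} costs a factor $\tau^{-1/2}$. The kernel of $\nabla_v\A^{-(1+\gamma)/2}$ is therefore bounded by $\tau^{(1+\gamma)/2-1-1/2}\widetilde p_\tau=\tau^{\gamma/2-1}\widetilde p_\tau$, and the same case analysis applies. For the adjoint, the derivative falls on the first variable of the reversed kernel. This requires the analogous gradient bound for the adjoint transition density, which the paper states holds. One checks that $\nabla_v$ of $y-x+\tau v$ produces a factor $\tau/\tau^{3/2}=\tau^{-1/2}$ as well, so it is harmless.

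\textbf{Part (iii).} For \eqref{eq:power-Lp}, use Minkowski's inequality in \eqref{eq:fractional-power} and contractivity of $\mathsf P_\tau$ on $L^r$:
\[
 \norm{\A^{-\gamma/2}f}_r\le\Gamma(\gamma/2)^{-1}\int_0^\infty\tau^{\gamma/2-1}e^{-\lambda\tau}d\tau\,\norm f_r=\lambda^{-\gamma/2}\norm f_r .
\]
The same holds for the adjoint, giving the constant $2$.

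For \eqref{eq:gradient-power-Lp}, the naive route is $\norm{\nabla_v\mathsf P_\tau}_{r\to r}\lesssim\tau^{-1/2}$, which follows from the gradient Gaussian bound since $\widetilde p_\tau$ has uniformly bounded mass. This gives the integrand $\tau^{(1+\gamma)/2-1-1/2}e^{-\lambda\tau}=\tau^{\gamma/2-1}e^{-\lambda\tau}$. Integrating yields $C\Gamma(\gamma/2)\lambda^{-\gamma/2}/\Gamma((1+\gamma)/2)$, which suffices for all $\gamma>0$. The constant depends only on $d,\gamma$, though the statement allows $r$-dependence.

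\textbf{Main obstacle.} We expect the only delicate point to be the adjoint gradient estimate in (ii) and (iii), where the differentiation in the first variable interacts with the shear $\tau v$. There we would compute directly from \eqref{eq:heat-kernel}, checking that $\nabla_v$ of the quadratic form is bounded by $\tau^{-1/2}$ times a polynomial in the scaled variables. That polynomial is then absorbed into a slightly smaller Gaussian.
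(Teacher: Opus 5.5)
Your proposal is correct and follows the paper's proof essentially step for step. The shared steps are: collapsing the subordination integral onto $\tau=t-s$; comparing the Gaussian exponent with $d(\zeta,\xi)^2/\tau$; noting that a velocity derivative costs exactly $\tau^{-1/2}$, so the power $\tau^{(\gamma-Q)/2}$ is unchanged; and, for part (iii), combining Minkowski's inequality with contractivity of $\mathsf P_\tau$ (respectively with the $\tau^{-1/2}$ gradient bound for $\mathsf P_\tau$). One minor correction to your ``main obstacle'': for the adjoint kernel the derivative falls on the endpoint velocity $w$ of $p_\tau(x,v;y,w)$, which does not enter the shear $y-x+\tau v$. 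It is the non-adjoint $v$-derivative, which you already handled, that interacts with the shear, and in both cases the direct computation gives $\tau^{-1/2}$ times linear factors in the normalized variables, absorbed into a wider Gaussian.
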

For the sake of completeness, 
the proof of Proposition \ref{prop:kernels}, which essentially relies on suitable estimates on $\{\mathsf P_\tau\}_{\tau\ge0}$, is included in Appendix \ref{prop:kernels_proof}.

For later use, define the intrinsic Riesz potential
\begin{equation}\label{eq:Tgamma}
 I_\gamma f(\zeta)
 :=\int_{\R^{1+2d}}
       \frac{\abs{f(\xi)}}{d_*(\zeta,\xi)^{Q-\gamma}}\,d\xi,
 \qquad 0<\gamma<Q.
\end{equation}
Clearly, $I_\gamma$ is positively homogeneous
and subadditive.  On nonnegative functions it coincides with the action of the linear 
operator having the same integral kernel.
Thus Proposition \ref{prop:kernels} gives the pointwise bounds
\begin{align}
 \abs{\A^{-\gamma/2}f}
 +\abs{(\A^*)^{-\gamma/2}f}
 &\le CI_\gamma f,\label{eq:power-by-I}\\
 \abs{\nabla_v\A^{-(1+\gamma)/2}f}
 +\abs{\nabla_v(\A^*)^{-(1+\gamma)/2}f}
 &\le CI_\gamma f.
 \label{eq:gradient-by-I}
\end{align}

\begin{proposition}[Kinetic Sobolev embeddings]\label{prop:sobolev}
Let $0<\gamma<Q$.
\begin{enumerate}[label=\textup{(\roman*)}]
\item If $1<r<Q/\gamma$ and $1/s=1/r-\gamma/Q$, then
\begin{equation}\label{eq:HLS}
 \norm{\A^{-\gamma/2}f}_{s}
 +\norm{(\A^*)^{-\gamma/2}f}_{s}
 \le C\norm f_r.
\end{equation}
\item If $1<r<\infty$ and $\gamma>Q/r$, then $\A^{-\gamma/2}$ and
$(\A^*)^{-\gamma/2}$ map $L^r$ continuously into
$C_\infty(\R^{1+2d})$.
\end{enumerate}
\end{proposition}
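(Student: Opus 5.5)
The plan is to reduce both assertions to properties of the intrinsic Riesz potential $I_\gamma$ on the homogeneous space $(\R^{1+2d},d_*,d\zeta)$, using the pointwise domination \eqref{eq:power-by-I} from Proposition~\ref{prop:kernels}.

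\emph{Part (i).} Since $|\A^{-\gamma/2}f|+|(\A^*)^{-\gamma/2}f|\le C I_\gamma f$, it suffices to prove the Hardy--Littlewood--Sobolev bound $\|I_\gamma f\|_s\le C\|f\|_r$. I will use Hedberg's argument. Fix $\zeta$ and $R>0$, and split $I_\gamma f(\zeta)$ into the regions $d_*(\zeta,\xi)<R$ and $d_*(\zeta,\xi)\ge R$.

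For the near part, decompose into dyadic annuli $2^{-k-1}R\le d_*<2^{-k}R$. The ball-volume bound \eqref{eq:ball-volume} gives
\[
\int_{d_*<R}\frac{|f(\xi)|}{d_*(\zeta,\xi)^{Q-\gamma}}\,d\xi\lesssim\sum_k (2^{-k}R)^{\gamma}\,\frac{1}{|B_{2^{-k}R}(\zeta)|}\int_{B_{2^{-k}R}(\zeta)}|f|\lesssim R^\gamma Mf(\zeta),
\]
where $M$ is the Hardy--Littlewood maximal function relative to $d_*$-balls.

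For the far part, apply H\"older's inequality. The dyadic sum $\sum_{k\ge0}(2^kR)^{(\gamma-Q)r'}(2^kR)^{Q}$ converges because $\gamma<Q/r$, so the far part is bounded by $C R^{\gamma-Q/r}\|f\|_r$.

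Optimizing in $R$, namely taking $R=(\|f\|_r/Mf(\zeta))^{r/Q}$, gives
\[
I_\gamma f(\zeta)\lesssim (Mf(\zeta))^{1-\gamma r/Q}\|f\|_r^{\gamma r/Q}=(Mf(\zeta))^{r/s}\|f\|_r^{1-r/s}.
\]
Since $(\R^{1+2d},d_*,d\zeta)$ is a doubling space of homogeneous type, $M$ is bounded on $L^r$ for $r>1$ by the Coifman--Weiss theory. Raising the pointwise bound to the power $s$ and integrating then gives $\|I_\gamma f\|_s\lesssim\|f\|_r$.

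\emph{Part (ii).} First I prove boundedness. When $\gamma>Q/r$, the kernel $d_*(\zeta,\cdot)^{\gamma-Q}$ is locally in $L^{r'}$ but not globally, so I will not use it alone. Instead I combine the pointwise kernel bound near the diagonal with exponential decay at infinity coming from $e^{-\lambda\tau}$. More precisely, I will show that the kernel of $\A^{-\gamma/2}$ is bounded by $C\min\{d_*^{\gamma-Q},\,e^{-c\lambda d_*^2}\cdots\}$; alternatively, I will split $\A^{-\gamma/2}=\A^{-\gamma_1/2}\A^{-\gamma_2/2}$ and use part (i) together with \eqref{eq:power-Lp} to reach an $L^{s}$ space with a large exponent. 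The cleanest route is the following. The kernel $K(\zeta,\xi)$ is a function $k(\zeta^{-1}\circ\xi)$ by left invariance. Using the subordination formula \eqref{eq:fractional-power} and the Gaussian bounds \eqref{eq:gaussian-upper}, one checks that $\|k\|_{L^{r'}}\lesssim\int_0^\infty\tau^{\gamma/2-1}e^{-\lambda\tau}\,\tau^{-Q'/(2r)}\,d\tau$, with the appropriate heat-kernel $L^{r'}$ norm in $(x,v)$ and time. I expect the exponent counting to give integrability exactly when $\gamma>Q/r$. Young's inequality on the group then yields $\|\A^{-\gamma/2}f\|_\infty\le\|k\|_{r'}\|f\|_r$.

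Next comes continuity. Left translation is continuous in $L^{r'}$, so $\zeta\mapsto\A^{-\gamma/2}f(\zeta)=\int k(\zeta^{-1}\circ\xi)f(\xi)\,d\xi$ is uniformly continuous. Vanishing at infinity follows by density: for $f\in C_c$, the function $\A^{-\gamma/2}f$ decays because $k\in L^{r'}$ and its tail vanishes; the uniform bound then lets us pass to general $f\in L^r$. The same argument applies to $(\A^*)^{-\gamma/2}$.

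The main obstacle is the global $L^{r'}$ integrability of the kernel in (ii). Controlling the time-$\tau$ integral of the $L^{r'}$ norms of the Gaussian $p_\tau$, including its space-time structure, requires the scaling count $\|p_\tau\|_{L^{r'}}$-type estimates. Small $\tau$ is handled by $\gamma>Q/r$, and large $\tau$ by $e^{-\lambda\tau}$.
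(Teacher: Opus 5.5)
Part (i) is exactly the paper's argument: the Hedberg splitting at radius $R$, the choice $R=(\norm f_r/\mathsf Mf(\zeta))^{r/Q}$, and $L^r$-boundedness of $\mathsf M$. The architecture of your (ii) is also the paper's: a uniform $L^{r'}$ bound on kernel sections, H\"older, continuity of left translations in $L^{r'}$, and density for vanishing at infinity.

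The gap is the step you flag yourself: the bound $\norm{K(\zeta,\cdot)}_{r'}<\infty$ for $\gamma>Q/r$ is never established. The displayed estimate, which integrates norms over $\tau$, does not work as written, under either reading of the undefined $Q'$.
\begin{itemize}
\item Space--time norm. For fixed $\zeta$ and $\tau$, the kernel of $\mathsf P_\tau$ in $\xi=(s,y,w)$ is the measure $\delta_{\{s=t-\tau\}}\otimes p_\tau(x,v;y,w)\,dy\,dw$. It lives on a single time slice, so it has no space--time $L^{r'}$ norm to integrate in $\tau$.
\item Velocity--position norm. If instead you insert $\norm{p_\tau(x,v;\cdot)}_{L^{r'}(dy\,dw)}=C\tau^{-2d/r}$ (your $Q'=4d$), the right-hand side is an $L^1_\tau L^{r'}_{y,w}$ quantity. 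It is finite as soon as $\gamma>4d/r$, below the true threshold $Q/r=(4d+2)/r$. The inequality is therefore false for $4d/r<\gamma\le Q/r$.
\end{itemize}
Your alternative of factoring through part (i) does not avoid this. HLS never reaches $L^\infty$, so the last factor needs the same kernel bound.

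The missing observation, used in the paper, is that the $\tau$-integral in \eqref{eq:fractional-power} collapses. Since $\mathsf P_\tau$ shifts time by exactly $\tau$, the kernel is explicit:
\[
K(\zeta,\xi)=\Gamma(\gamma/2)^{-1}\mathbf 1_{\{t>s\}}\,\tau^{\gamma/2-1}e^{-\lambda\tau}p_\tau(x,v;y,w),\qquad \tau=t-s .
\]
So $\norm{K(\zeta,\cdot)}_{r'}^{r'}$ is an exact iterated integral in $(s,y,w)$. The substitution $a=(w-v)/\sqrt\tau$, $c=(y-x+\tau v)/\tau^{3/2}$ has Jacobian $\tau^{2d}$ and gives $\int p_\tau^{r'}\,dy\,dw=C\tau^{-2d(r'-1)}$. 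Hence
\[
\norm{K(\zeta,\cdot)}_{r'}^{r'}=C\int_0^\infty\tau^{((\gamma-Q)r'+Q)/2-1}e^{-\lambda r'\tau}\,d\tau .
\]
This is finite exactly when $(\gamma-Q)r'+Q>0$, i.e.\ $\gamma>Q/r$, and it gives $\norm{K(\zeta,\cdot)}_{r'}\le C\lambda^{-(\gamma-Q/r)/2}$ uniformly in $\zeta$.

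Your formula with $Q'=Q$ is in fact recoverable, up to constants, but it has to be derived slab by slab. Split into $2^j\le t-s<2^{j+1}$; each slab has $L^{r'}$ norm $\lesssim 2^{j(\gamma-Q/r)/2}e^{-\lambda 2^j}$, and the triangle inequality sums them.

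With this in place, the rest of your (ii) goes through. The continuity you obtain from left translations is with respect to $d_*$, which induces the Euclidean topology, so it suffices. Your tail-of-$k$ argument for vanishing at infinity is a valid variant of the paper's, which instead uses the pointwise bound $d_*^{\gamma-Q}$ together with $\norm f_1$ for compactly supported $f$.
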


We included the proof of Proposition \ref{prop:sobolev} in Appendix \ref{prop:sobolev_proof}.

\subsection{Kinetic Bessel-potential spaces}\label{sec:spaces}

Fix $\lambda>0$ and $1<p<\infty$.  For $\gamma>0$, define
\begin{equation*}
 \W_{\K}^{\gamma,p}
 :=\A^{-\gamma/2}L^p(\R^{1+2d}),
 \qquad
 \norm u_{\W_{\K}^{\gamma,p}}
 :=\norm{\A^{\gamma/2}u}_p.
\end{equation*}
For $\gamma<0$, let $\W_{\K}^{\gamma,p}$ be the completion of $L^p$ in the
norm
\begin{equation}\label{kinetic-Sobolev}
 \norm f_{\W_{\K}^{\gamma,p}}:=\norm{\A^{\gamma/2}f}_p.
\end{equation}
Define $\W_{\K^*}^{\gamma,p}$ using $\A^*$ in the same way. We have the duality relation
\begin{equation}\label{eq:Bessel-duality}
 (\W_{\K}^{\gamma,p})^*=\W_{\K^*}^{-\gamma,p'},
\end{equation}
with the distributional pairing extending the usual $L^2$ pairing.  These spaces
refine the scale of kinetic Sobolev spaces; for further details see \cite{NiebelZacher,AuscherNiebel}.

The operator $\A$ is an isometric isomorphism
\begin{equation*}
 \A:\W_{\K}^{\gamma,p}\rightarrow\W_{\K}^{\gamma-2,p}.
\end{equation*}
In particular, if
$
 f\in\W_{\K}^{-1-1/p',p}
       =\W_{\K}^{1/p-2,p}$,
then
$
 \A^{-(1-1/(2p))}f\in L^p.
$

The following consequence of Proposition \ref{prop:sobolev} will be needed in order to consider a delta-function in the initial condition for the forward equation.

\begin{corollary}[Point evaluation]\label{cor:point-evaluation}
If $s>Q/p'$, then
\begin{equation*}
 \W_{\K^*}^{s,p'}\hookrightarrow C_\infty(\R^{1+2d}),
\end{equation*}
and therefore
\begin{equation*}
 \delta_{\{\zeta_0\}}\in\W_{\K}^{-s,p}
 \qquad\text{for every }\zeta_0\in\R^{1+2d}.
\end{equation*}
\end{corollary}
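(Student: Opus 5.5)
The plan is to apply Proposition \ref{prop:sobolev}(ii) to the adjoint operator and then pass to the dual space via \eqref{eq:Bessel-duality}.

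\emph{Step 1: the embedding.} Let $u\in\W_{\K^*}^{s,p'}$. By definition $u=(\A^*)^{-s/2}g$ with $g:=(\A^*)^{s/2}u\in L^{p'}$ and $\norm u_{\W_{\K^*}^{s,p'}}=\norm g_{p'}$. We have $1<p'<\infty$ and $s>Q/p'$, which are exactly the hypotheses of Proposition \ref{prop:sobolev}(ii) with $r=p'$ and $\gamma=s$. That result says $(\A^*)^{-s/2}$ maps $L^{p'}$ continuously into $C_\infty(\R^{1+2d})$. Hence $u$ has a representative in $C_\infty$ with
\[
\sup|u|\le C\norm g_{p'}=C\norm u_{\W_{\K^*}^{s,p'}} .
\]
One caveat: Proposition \ref{prop:sobolev}(ii) is stated only for $\gamma<Q$. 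If $s\ge Q$, I would write $(\A^*)^{-s/2}=(\A^*)^{-\gamma/2}(\A^*)^{-(s-\gamma)/2}$ with some $\gamma\in(Q/p',Q)$. The factor $(\A^*)^{-(s-\gamma)/2}$ is bounded on $L^{p'}$ by \eqref{eq:power-Lp}, or simply by the subordination formula \eqref{eq:fractional-power} and contractivity of the semigroup. The semigroup property of the fractional powers, from Appendix \ref{app:fractional-powers}, then reduces this case to the previous one.

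\emph{Step 2: the Dirac mass.} Fix $\zeta_0$ and set $\ell(u):=u(\zeta_0)$ for $u\in\W_{\K^*}^{s,p'}$. By Step 1, $|\ell(u)|\le C\norm u_{\W_{\K^*}^{s,p'}}$, so $\ell$ is a bounded linear functional on $\W_{\K^*}^{s,p'}$. By the duality \eqref{eq:Bessel-duality}, applied with the roles of $\K$ and $\K^*$ exchanged, $(\W_{\K^*}^{s,p'})^*=\W_{\K}^{-s,p}$, and $\ell$ is identified with an element of $\W_{\K}^{-s,p}$. This element acts as $\delta_{\{\zeta_0\}}$ under the pairing that extends the $L^2$ pairing, because $\langle\delta_{\{\zeta_0\}},\varphi\rangle=\varphi(\zeta_0)$ for $\varphi\in C_c^\infty$.

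The main obstacle is making this identification rigorous, since $\W_{\K}^{-s,p}$ is defined abstractly as a completion. I would handle it by mollification. Take $\rho_\eps\in C_c^\infty$ approximating $\delta_{\{\zeta_0\}}$. Then
\[
\norm{\rho_\eps-\rho_{\eps'}}_{\W_\K^{-s,p}}=\sup_{\norm u\le1}\bigl|\langle \rho_\eps-\rho_{\eps'},u\rangle\bigr|.
\]
This tends to $0$ because $\langle\rho_\eps,u\rangle\to u(\zeta_0)$ uniformly on the unit ball of $\W_{\K^*}^{s,p'}$. That uniformity comes from equicontinuity of this ball: $(\A^*)^{-s/2}$ is compact into $C$ locally, or, more simply, the kernel $(\A^*)^{-s/2}(\zeta,\cdot)$ depends continuously on $\zeta$ in $L^{p}$, which is the same fact used in the proof of Proposition \ref{prop:sobolev}(ii). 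The Cauchy limit is then $\delta_{\{\zeta_0\}}$, and it lies in $\W_{\K}^{-s,p}$.
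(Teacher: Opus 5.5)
Your proposal is correct and follows essentially the same route as the paper: factor $(\A^*)^{-s/2}=(\A^*)^{-\gamma/2}(\A^*)^{-(s-\gamma)/2}$ with $Q/p'<\gamma<\min\{s,Q\}$, apply Proposition~\ref{prop:sobolev}(ii), and dualize the embedding via $C_\infty^*\hookrightarrow(\W_{\K^*}^{s,p'})^*=\W_{\K}^{-s,p}$. Your mollification argument, based on $L^{p}$-continuity of the kernel sections, soundly identifies the point-evaluation functional with $\delta_{\{\zeta_0\}}$ inside the abstract completion, a step the paper leaves implicit.
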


\begin{proof}
Choose $\gamma$ with $Q/p'<\gamma<\min\{s,Q\}$.  The factorization
$
 (\A^*)^{-s/2}
 =(\A^*)^{-\gamma/2}(\A^*)^{-(s-\gamma)/2},
$
Proposition~\ref{prop:kernels}(iii), and
Proposition~\ref{prop:sobolev}(ii) give the asserted embedding for every
$s>Q/p'$, including $s\ge Q$. Owing to the duality $C^*_\infty(\mathbb R^{1+2d})\hookrightarrow (\W_{\K^*}^{s,p'})^*$, we have $\delta_{\{\zeta_0\}}\in\W_{\K}^{-s,p}$.
\end{proof}

\bigskip

\section{Main results}\label{sec:main-results}

\subsection{Hypothesis, smallness condition  and Duhamel blocks} \

\medskip
\noindent
\textbf{Hypothesis $(\mathrm H_{\K})$.}
There exist $q>1$ and a decomposition of the drift into a singular and a bounded part, respectively:
\begin{equation}
\label{eq:HK}
 b=b_{\mathrm s}+b_{\mathrm b},
 \qquad b_{\mathrm s}\in\M^q_{\K},
 \quad b_{\mathrm b}\in L^\infty(\R^{1+2d},\R^d).\tag{$\mathrm H_{\K}$}
\end{equation}
The norm $\norm{b_{\mathrm s}}_{\M^q_{\K}}$ will be chosen sufficiently small later on; see \eqref{eq:smallness}.
\medskip

Define the power drift $$b^{1/p'}:=b|b|^{-1+1/p'} \quad \text{on $\{0<|b|<\infty\}$}$$ and set it equal to zero elsewhere.
For $1<p<\infty$, define, on smooth test functions, the operators
\begin{align}
 H_p(b)&:=\abs b^{1/p}\A^{-1/(2p)},
 \label{eq:Hp}\\
 G_p(b)&:=-\A^{-(1-1/(2p))}
       \divv\bigl(b^{\frac{1}{p'}}\,\cdot\bigr),
 \label{eq:Gp}
\end{align}
which will serve as the building blocks for the Duhamel series for $\eta$, solution to \eqref{eq:forward-intro}.

As shown in Proposition \ref{prop:rellich},  $H_p(b),G_p(b)$ are bounded operators on $L ^p$ 
and
\begin{align}
 \norm{H_p(b)}_{p\rightarrow p}
 &\le C_{d,p,q}\norm{b_{\mathrm s}}_{\M^q_{\K}}^{1/p}
 +C_{d,p}\lambda^{-1/(2p)}\norm{b_{\mathrm b}}_\infty^{1/p}
 =:\mathfrak h_{p,\lambda}(b)  \label{eq:hbound} \\
 \norm{G_p(b)}_{p\rightarrow p}
 =\norm{G_p^*(b)}_{p'\rightarrow p'}
 & \le C_{d,p',q}\norm{b_{\mathrm s}}_{\M^q_{\K}}^{1/p'}
 +C_{d,p'}\lambda^{-1/(2p')}\norm{b_{\mathrm b}}_\infty^{1/p'} 
 =:\mathfrak g_{p,\lambda}(b),
 \label{eq:gbound}
\end{align}
where the adjoint of $G_p{(b)}$ is
\begin{equation*}
 G_p^*{(b)}=b^{\frac{1}{p'}}\cdot
       \nabla_v(\A^*)^{-(1-1/(2p))}.
\end{equation*}
We will use the same notation for the extension of these operators by continuity to $L^p$.

(Let us emphasize that in the kinetic setting one cannot commute the gradient $\nabla_v$ and the free resolvent since 
$
 [\partial_{v_j},\partial_t+v\cdot\nabla_x]=\partial_{x_j}.
$)

For fixed $1<p<\infty$, we can state our smallness condition on the drift $b$: there exists
$\lambda>0$ such that
\begin{equation}\label{eq:smallness}
	\mathfrak g_{p,\lambda}(b)\mathfrak h_{p,\lambda}(b)<1.
	\tag{$\mathrm H_{\mathrm s,p}$}
	\end{equation}
In particular, by \eqref{eq:hbound}-\eqref{eq:gbound}, for every fixed $d,p,q$, there is $c_{d,p,q}>0$ such that
\eqref{eq:smallness} holds for all sufficiently large $\lambda$ whenever
\begin{equation*}
 \norm{b_{\mathrm s}}_{\M^q_{\K}}<c_{d,p,q}.
\end{equation*}

\subsection{Regularization of the drift}\label{sec:regularization}

\begin{definition}\label{def:admissible}
A sequence of bounded smooth vector fields $\{b_n\}_n$ is an admissible (or $p$-admissible)  
regularization of $b$ 
if
\begin{enumerate}[label=\textup{(\roman*)}]
\item $b_n\rightarrow b$ almost everywhere and in $L^q_{\mathrm{loc}}$ where $q$ is the exponent in the decomposition  \eqref{eq:HK};

\item For all $n$, $b_n\in L^\infty+\M^q_\K$ where the singular and bounded parts satisfy, uniformly in $n$, the bounds in
\eqref{eq:smallness} with $b_n$ in place of $b$, namely,
\[
 \sup_n\mathfrak g_{p,\lambda}(b_n)
 \mathfrak h_{p,\lambda}(b_n)<1;
\]
\item As $n\rightarrow \infty$,
\begin{equation}\label{eq:strong-factors}
 H_p(b_n)\rightarrow H_p(b),
 \qquad G_p(b_n)\rightarrow G_p(b)
 \quad\text{strongly on }L^p.
\end{equation}
\end{enumerate}
\end{definition}

\begin{proposition}[Existence of admissible regularizations]
\label{prop:regularization}
Let $b$ satisfy \eqref{eq:HK} and \eqref{eq:smallness}.  Then there is an
admissible regularization $\{b_n\}_n$ for $b$ for which
\begin{equation}\label{eq:regularization-bounds}
 \norm{b_{n,\mathrm s}}_{\M^q_{\K}}
 \le\norm{b_{\mathrm s}}_{\M^q_{\K}},
 \qquad
 \norm{b_{n,\mathrm b}}_\infty
 \le\norm{b_{\mathrm b}}_\infty.
\end{equation}
\end{proposition}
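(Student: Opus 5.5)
We construct the regularization by mollifying each part of the decomposition separately, using the group structure so that the Morrey norm cannot grow. Let $\rho\in C_c^\infty(\R^{1+2d})$ be nonnegative with $\langle\rho\rangle=1$, and set $\rho_\eps:=\eps^{-Q}\rho\circ\delta_{1/\eps}$. Convolve on the kinetic group (right translations), $(g*\rho_\eps)(\zeta)=\int g(\zeta\circ\theta^{-1})\rho_\eps(\theta)\,d\theta$, which is smooth. To get bounded fields we first truncate and then mollify:
\[
b_{n,\mathrm s}:=\bigl(\mathbf 1_{\{|b_{\mathrm s}|\le n\}}b_{\mathrm s}\bigr)*\rho_{1/n},\qquad b_{n,\mathrm b}:=b_{\mathrm b}*\rho_{1/n},\qquad b_n:=b_{n,\mathrm s}+b_{n,\mathrm b}.
\]
The $L^\infty$ bound $\norm{b_{n,\mathrm b}}_\infty\le\norm{b_{\mathrm b}}_\infty$ is immediate. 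For the Morrey bound we argue as follows. Truncation does not increase $|b_{\mathrm s}|$. By Minkowski's inequality, $\bigl(\int_{B_r(\zeta)}|g*\rho_\eps|^q\bigr)^{1/q}\le\int\rho_\eps(\theta)\bigl(\int_{B_r(\zeta)}|g(\xi\circ\theta^{-1})|^q d\xi\bigr)^{1/q}d\theta$. The right translation $\xi\mapsto\xi\circ\theta^{-1}$ preserves Lebesgue measure, so the inner integral is an integral over a translated set of the same measure. The difficulty is that right translation does not map $d_*$-balls to $d_*$-balls, because $d_*$ is only left-invariant.

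\textbf{Main obstacle.} To avoid this, we use \emph{left} convolution, $\int\rho_\eps(\theta)g(\theta^{-1}\circ\xi)\,d\theta$. Then $\xi\mapsto\theta^{-1}\circ\xi$ maps $B_r(\zeta)$ onto $B_r(\theta^{-1}\circ\zeta)$ by left invariance of $d_*$, and the Morrey sup bounds every term uniformly. This gives $\norm{b_{n,\mathrm s}}_{\M^q_\K}\le\norm{b_{\mathrm s}}_{\M^q_\K}$ exactly, and smoothness is still inherited from $\rho$. The bounds \eqref{eq:regularization-bounds} then give $\mathfrak g_{p,\lambda}(b_n)\le\mathfrak g_{p,\lambda}(b)$ and $\mathfrak h_{p,\lambda}(b_n)\le\mathfrak h_{p,\lambda}(b)$, so condition (ii) follows from \eqref{eq:smallness} with the same $\lambda$.

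For condition (i), standard approximate-identity arguments apply on the homogeneous group, since Lebesgue points and the doubling property of Section~\ref{sec:geometry} are available. They give $b_n\to b$ in $L^q_{\rm loc}$ and, along a subsequence, almost everywhere.

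For condition (iii) we prove strong convergence on a dense set and then use uniform bounds. By \eqref{eq:hbound}--\eqref{eq:gbound} the norms of $H_p(b_n)$ and $G_p(b_n)$ are uniformly bounded, so it suffices to test on $f\in C_c^\infty$. For $H_p$, we write $(|b_n|^{1/p}-|b|^{1/p})\A^{-1/(2p)}f$ and split into a large cylinder $K$ and its complement. On $K$, $\A^{-1/(2p)}f$ is bounded and $|b_n|^{1/p}\to|b|^{1/p}$ in $L^{pq}(K)$, since $|\,|a|^{1/p}-|c|^{1/p}|^p\le|a-c|$. Off $K$, the decay $\A^{-1/(2p)}f\lesssim I_{1/p}|f|\lesssim d_*^{1/p-Q}$ combined with the uniform Morrey bound makes the tail uniformly small, via a dyadic annuli sum using $q>1$.

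For $G_p$ we use duality. Strong convergence of $G_p(b_n)$ on $L^p$ follows from weak convergence together with the fact that $G_p(b_n)f\to G_p(b)f$ in norm. We test against $g$ smooth and use $\langle G_p(b_n)f,g\rangle=\langle f,b_n^{1/p'}\cdot\nabla_v(\A^*)^{-(1-1/(2p))}g\rangle$. Writing $G_p(b)f=-\A^{-(1-1/(2p))}\divv(b^{1/p'}f)$ with $f$ compactly supported, $b_n^{1/p'}f\to b^{1/p'}f$ in $L^{p'q}$ on a compact set; this uses continuity of $a\mapsto a|a|^{-1/p}$ in the sense $|a^{1/p'}-c^{1/p'}|\lesssim|a-c|^{1/p'}$. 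The remaining step, and the hardest, is to show that $\A^{-(1-1/(2p))}\divv$ is continuous from compactly supported $L^{s}$ into $L^p$ for some $s$. This follows from the kernel bound \eqref{eq:gradient-fractional-kernel-bound} by duality, and from Proposition~\ref{prop:sobolev}(i) with $\gamma=1-1/p$ and $s=p'q\wedge$ the Sobolev exponent, using compact support to lower the exponent. If that exponent bookkeeping fails, the fallback is the same near/far splitting as for $H_p$, applied to the adjoint.
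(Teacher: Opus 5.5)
Your construction (truncating only $b_{\mathrm s}$, tying the mollification scale to the truncation level, left group convolution) handles \eqref{eq:regularization-bounds}, conditions (i)--(ii), and the $H_p$ half of (iii) correctly, modulo a minor point below. The gap is in the strong convergence $G_p(b_n)\to G_p(b)$.

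You reduce it to $G_p(b_n)f-G_p(b)f=D_p\bigl((b_n^{1/p'}-b^{1/p'})f\bigr)$ with $D_p=-\A^{-(1-1/(2p))}\divv$. Here the argument $(b_n^{1/p'}-b^{1/p'})f$ converges only in $L^{p'q}$ of a compact set, so you need $D_p$ to map compactly supported $L^{p'q}$ into $L^p$. Since $\abs{D_pF}\lesssim I_{1/p'}\abs F$, Proposition~\ref{prop:sobolev}(i) gives this only if $\frac1{p'q}-\frac1{p'Q}\le\frac1p$, i.e.\ $\frac1q\le\frac1{p-1}+\frac1Q$. This is sharp for Riesz potentials. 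It fails, e.g., for $q$ close to $1$ and $p\ge3$, a range the statement covers and the paper uses. Indeed, for $p-1>q$ one does not even have $b^{1/p'}f\in L^p$ unless $b\in L^{p-1}_{\mathrm{loc}}$.

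Your fallback does not close this. The near/far splitting applied to the adjoint proves $G_p^*(b_n)\varphi\to G_p^*(b)\varphi$ in $L^{p'}$, which is only weak operator convergence of $G_p(b_n)$. Definition~\ref{def:admissible}(iii) and the resolvent argument in Theorem~\ref{thm:main}(ii) need strong convergence of both factors. The sentence deriving strong convergence from ``weak convergence together with $G_p(b_n)f\to G_p(b)f$ in norm'' is circular. You also cannot apply \eqref{eq:rellich-gradient} directly to $\mathbf 1_K(b_n-b)$: its Morrey norm stays bounded but does not tend to zero. For instance, for $b=\kappa v/\abs v^2$ the tail $b\mathbf 1_{\{\abs b>n\}}$ has $n$-independent Morrey norm by scale invariance.

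The paper avoids this by truncating the \emph{total} field with one scalar factor: $c_n=a_nb$ with $a_n=\chi_n\min\{1,n/\abs b\}$ applied to both components. This gives the exact identities $H_p(c_n)f=a_n^{1/p}H_p(b)f$ and, by duality, $G_p(c_n)f=G_p(b)(a_n^{1/p'}f)$. Strong convergence then follows from dominated convergence and boundedness of $G_p(b)$, with no integrability of $b^{1/p'}f$ required. Only afterwards, for \emph{fixed} $n$, does it mollify; there $c_n$ is bounded with compact support and $D_p$ is bounded on $L^p$ by \eqref{eq:gradient-power-Lp}. The scale $\eps_n$ is then chosen diagonally on a countable dense set. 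Your coupling $\eps=1/n$ rules out this two-step structure.

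Alternatively, your regularization can be rescued by interpolation. Take $K\supset\supp f$, $h_n=\mathbf 1_K\abs{b_n-b}$ and $\Psi=\nabla_v(\A^*)^{-(1-1/(2p))}\varphi$. Combine, via H\"older, the uniform bound $\int h_n\abs\Psi^{p'}\lesssim\sup_n\norm{h_n}_{\M^q_{\K}}\norm\varphi_{p'}^{p'}$ with $\norm\Psi_{p'}\lesssim\norm\varphi_{p'}$. This gives $\int_K h_n^{1/p'}\abs\Psi\lesssim\bigl(\int_K h_n^{(1-\theta)(p-1)}\bigr)^{1/p}\norm\varphi_{p'}$ for any $\theta\in(0,1)$. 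Choosing $(1-\theta)(p-1)\le q$, this tends to zero uniformly over $\norm\varphi_{p'}\le1$.

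Minor point in your $H_p$ tail: handle the bounded component using $\A^{-1/(2p)}f\in L^p$. With only the decay $d_*^{1/p-Q}$, the dyadic sum for that component diverges when $p\le1+1/Q$.
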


\begin{proof}
We separate truncation from mollification.  This is useful because the
operators in \eqref{eq:Hp}--\eqref{eq:Gp} depend nonlinearly on the total
vector field $b$, rather than separately on the two terms in
\eqref{eq:HK}.

Choose a sequence of cut-offs $\chi_n\in C_c^\infty(\R^{1+2d})$ such that
$0\le\chi_n\le1$, $\chi_n=1$ on $B_n(0)$, and $\chi_n\rightarrow1$
pointwise.  With the $min\{\cdot,\cdot\}$ 
below interpreted as equal to $1$ on $\{b=0\}$, let us put for a.e. $\zeta\in \mathbb R^{1+2d}$,
\begin{equation}\label{eq:regularization-scalar-cutoff}
 a_n(\zeta):=\chi_n(\zeta)
       \min\left\{1,\frac{n}{\abs{b(\zeta)}}\right\},
 \qquad
 c_n:=a_nb,
 \qquad
 c_{n,\mathrm s}:=a_nb_{\mathrm s},
 \qquad
 c_{n,\mathrm b}:=a_nb_{\mathrm b}.
\end{equation}
Then $c_n=c_{n,\mathrm s}+c_{n,\mathrm b}$,
$0\le a_n\le1$, and $a_n\rightarrow1$ almost everywhere.  Moreover,
\[
 \abs{c_n}\le n,
 \qquad
 \abs{c_{n,\mathrm s}}\le n+\norm{b_{\mathrm b}}_\infty,
 \qquad
 \abs{c_{n,\mathrm b}}\le\norm{b_{\mathrm b}}_\infty.
\]
Thus both components are bounded and compactly supported. We also have, clearly,
\begin{equation}\label{eq:truncated-component-bounds}
 \norm{c_{n,\mathrm s}}_{\M^q_{\K}}
 \le\norm{b_{\mathrm s}}_{\M^q_{\K}},
 \qquad
 \norm{c_{n,\mathrm b}}_\infty
 \le\norm{b_{\mathrm b}}_\infty.
\end{equation}
Since $a_n\rightarrow1$ and $0\le a_n\le1$, applying the dominated convergence theorem on compact
sets also shows that
\begin{equation}\label{eq:truncated-local-convergence}
 c_{n,\mathrm s}\rightarrow b_{\mathrm s}\quad\hbox{in }L^q_{\mathrm{loc}},
 \qquad
 c_{n,\mathrm b}\rightarrow b_{\mathrm b}\quad\hbox{in }L^q_{\mathrm{loc}},
\end{equation}
as well as a.e.

The reason for using the same scalar $a_n$ in both components $c_{n,\mathrm s}$ and $ c_{n,\mathrm b}$ is explained by observing the pair of
exact identities: for all $f\in L^p$,
\begin{equation}\label{eq:truncated-factor-identities}
 \begin{split}
 H_p(c_n)f&=a_n^{1/p}H_p(b)f,\\
 G_p(c_n)f&=G_p(b)\bigl(a_n^{1/p'}f\bigr).
 \end{split}
\end{equation}
Indeed, the first identity follows immediately from $\abs{c_n}^{1/p}=a_n^{1/p}\abs b^{1/p}$. For the second one, we have 
$c_n^{\frac{1}{p'}}=a_n^{1/p'}b^{\frac{1}{p'}}$
 from which we note that, for $f\in L^p$ and
$\varphi\in L^{p'}$, we have
\[
 \begin{split}
 \langle G_p(c_n)f,\varphi\rangle
 &=\int f\,c_n^{\frac{1}{p'}}\cdot
       \nabla_v(\A^*)^{-(1-1/(2p))}\varphi\,d\zeta\\
 &=\int a_n^{1/p'}f\,b^{\frac{1}{p'}}\cdot
       \nabla_v(\A^*)^{-(1-1/(2p))}\varphi\,d\zeta\\
 &=\langle G_p(b)(a_n^{1/p'}f),\varphi\rangle.
 \end{split}
\] 
 The
first identity in
\eqref{eq:truncated-factor-identities}, dominated convergence in $L^p$, and
the boundedness of $H_p(b)$ imply
\[
 H_p(c_n)f\rightarrow H_p(b)f
 \qquad\hbox{in }L^p
\]
for every $f\in L^p$.  The second identity and
$a_n^{1/p'}f\rightarrow f$ in $L^p$ similarly imply
\[
 G_p(c_n)f\rightarrow G_p(b)f
 \qquad\hbox{in }L^p.
\]
After this truncation, we next apply a mollifier.  Let $\rho\ge0$ be smooth, supported in $B_1(0)$ and
having integral equal to $1$, and put
$\rho_\eps(\zeta):=\eps^{-Q}\rho(\delta_{1/\eps}\zeta)=\eps^{-Q}\rho(t/\eps^2,x/\eps^3,v/\eps)$. For the mollification, we will use the convolution 
along the composition law \eqref{CompositionLaw}:
\begin{equation*}
 (\rho_\eps*g)(\zeta)
 :=\int\rho_\eps(\eta)g(\eta^{-1}\circ\zeta)\,d\eta.
\end{equation*}
This convolution is a contraction in the critical Morrey norm $\|\cdot\|_{\M^q_\K}$.  In fact,
left invariance of the related distance $d_*$ and of Lebesgue measure gives
$
\xi^{-1}\circ B_r(\zeta)=B_r(\xi^{-1}\circ\zeta),
$
and hence 
\begin{align*}
 \norm{\rho_\eps*g}_{L^q(B_r(\zeta))}
 &\le\int\rho_\eps(\xi)
 \norm{g(\xi^{-1}\circ\,\cdot)}_{L^q(B_r(\zeta))}\,d\xi=\int\rho_\eps(\xi)
 \norm g_{L^q(B_r(\xi^{-1}\circ\zeta))}\,d\xi.
\end{align*}
After multiplication by $r\abs{B_r}^{-1/q}$ and taking the supremum over
$r$ and $\zeta$, this yields
\begin{equation}\label{eq:convolution-contractions}
 \norm{\rho_\eps*g}_{\M^q_{\K}}
 \le\norm g_{\M^q_{\K}} \quad \text{ and } \quad
 \norm{\rho_\eps*g}_\infty\le\norm g_\infty.
\end{equation}

For fixed $n$, write now $c_{n,\eps}:=\rho_\eps*c_n$ for which we have the further decomposition $c_{n,\eps}=\rho_\eps*c_{n,\mathrm b}+\rho_\eps*c_{n,\mathrm s}\in L^\infty+\M^q_\K$. The approximation of the identity gives $c_{n,\eps}\rightarrow c_n$ in every finite local Lebesgue
space and almost everywhere as $\eps\downarrow0$.  For $0<\eps\le1$, all
these functions are uniformly bounded and supported in a fixed compact set
depending on $n$.  It follows, first for any $f\in C_c^\infty$ (which, we may recall, is dense in $L^p$), that
\begin{equation}\label{eq:mollified-H-convergence}
 H_p(c_{n,\eps})f\rightarrow H_p(c_n)f
 \qquad\hbox{in }L^p.
\end{equation}
Indeed, the difference being 
$
 \bigl(\abs{c_{n,\eps}}^{1/p}-\abs{c_n}^{1/p}\bigr)
       \A^{-1/(2p)}f,
$
$$
\|H_p(c_{n,\eps})f- H_p(c_n)f\|_p\le \||c_{n,\eps}-c_n|^{\frac1{p}}\A^{-1/(2p)}f\|_p\\
$$
and the convergence to $0$ follows next from \eqref{eq:power-Lp} in Proposition \ref{prop:kernels}(iii) and the dominated convergence theorem (recalling that $|c_{n}|\le n$ and so $|c_{n,\eps}|\le n$).

To handle the second limit $G_p(c_{n,\eps})\rightarrow G_p(c_n)$ in $L^p$
, let us introduce the operator
\[
 D_p:=-\A^{-(1-1/(2p))}\divv,
\]
for which we have $G_p(c_n)(f)=D_p(c^{\frac1{p'}}_n f)$. Its adjoint is
$D_p^*=\nabla_v(\A^*)^{-(1+1/p')/2}$.  Then \eqref{eq:gradient-power-Lp} in 
Proposition \ref{prop:kernels}(iii) with $\gamma=1/p'$ and the duality $\|D_pF\|_{p}=\sup_{\|G\|_{p'}=1}\langle F,D_p^*G\rangle$  give
\begin{equation*}
 \norm{D_pF}_p
 \le C_{d,p}\lambda^{-1/(2p')}\norm F_p,
 \qquad F\in L^p(\R^{1+2d},\R^d).
\end{equation*}
Therefore, for $f\in C_c^\infty$, as $\eps\rightarrow 0$,
\begin{align*}
 \norm{G_p(c_{n,\eps})f-G_p(c_n)f}_p
 &\le C_{d,p}\lambda^{-1/(2p')}
   \norm{\bigl(c^{\frac{1}{p'}}_{n,\eps}-c^{\frac{1}{p'}}_n\bigr)f}_p
 \rightarrow0,
\end{align*}
follows again by dominated convergence.  Uniform bounds and density extend
\eqref{eq:mollified-H-convergence} and the corresponding convergence for
$G_p$ to every fixed $f\in L^p$.

It remains to choose one mollification scale in terms of 
$n$ to eventually set $c_{n,\varepsilon_n}$ as an admissible approximation. First note that we can choose $\varepsilon_n\downarrow0$ rapidly such that
\begin{equation}\label{eq:diagonal-local-errors}
	\norm{\rho_{\eps_n}*c_{n,\mathrm s}-c_{n,\mathrm s}}_{L^q(B_n)}^q
	+\norm{\rho_{\eps_n}*c_{n,\mathrm b}-c_{n,\mathrm b}}_{L^q(B_n)}^q
	<2^{-n}.
\end{equation}
Then let \(\{f_j:j\ge1\}\subset C_c^\infty(\mathbb R^{1+2d})\) be a countable dense subset of \(L^p\). By the preceding operator convergences, we may choose \(\varepsilon_n\downarrow0\), with \(0<\varepsilon_n<1/n\), such that for $1\le j\le n$,
\begin{equation}\label{eq:diagonal-factor-errors}
	\begin{split}
		&\norm{\bigl(H_p(\rho_{\eps_n}*c_n)-H_p(c_n)\bigr)f_j
		}_p\\
		&\qquad+
		\norm{\bigl(G_p(\rho_{\eps_n}*c_n)-G_p(c_n)\bigr)
			f_j
		}_p
		<2^{-n},
	\end{split}
\end{equation}
and \eqref{eq:diagonal-local-errors} holds simultaneously.
Define
\begin{equation*}
 b_{n,\mathrm s}:=\rho_{\eps_n}*c_{n,\mathrm s},
 \qquad
 b_{n,\mathrm b}:=\rho_{\eps_n}*c_{n,\mathrm b},
 \qquad
 b_n:=b_{n,\mathrm s}+b_{n,\mathrm b}
       =\rho_{\eps_n}*c_n.
\end{equation*}
These vector fields are smooth, bounded, and compactly supported.
Equations \eqref{eq:truncated-local-convergence} and
\eqref{eq:diagonal-local-errors} give convergence of the two components in
$L^q_{\mathrm{loc}}$. 
\eqref{eq:diagonal-local-errors}, applied on each fixed ball, also gives
$b_{n,\mathrm s}-c_{n,\mathrm s}\rightarrow0$ and
$b_{n,\mathrm b}-c_{n,\mathrm b}\rightarrow0$ almost everywhere. Hence
$b_n\rightarrow b$ almost everywhere.

Combining \eqref{eq:truncated-factor-identities} with
\eqref{eq:diagonal-factor-errors} proves \eqref{eq:strong-factors} first on
the dense set $\{f_j\}$ and then on all of $L^p$, using the uniform operator
bounds.  Finally, \eqref{eq:truncated-component-bounds} and
\eqref{eq:convolution-contractions} give the stronger estimates
\[
 \norm{b_{n,\mathrm s}}_{\M^q_{\K}}
 \le\norm{b_{\mathrm s}}_{\M^q_{\K}},
 \qquad
 \norm{b_{n,\mathrm b}}_\infty
 \le\norm{b_{\mathrm b}}_\infty.
\]
Thus \eqref{eq:regularization-bounds} holds, and
\[
 \sup_n\mathfrak g_{p,\lambda}(b_n)\mathfrak h_{p,\lambda}(b_n)
 \le\mathfrak g_{p,\lambda}(b)\mathfrak h_{p,\lambda}(b)<1.
\]
This proves admissibility.
\end{proof}

\subsection{Forward and backward Kolmogorov equations} 

We can now state our main results on the solution to Equation \ref{eq:forward-intro}.
\begin{theorem}[Forward PDE theory]\label{thm:main}
Let $d\ge2$ and $b$ satisfy \eqref{eq:HK}. Fix $1<p<\infty$ and
assume \eqref{eq:smallness}.  Then:
\begin{enumerate}[label=\textup{(\roman*)}]
\item For every $f\in\W_{\K}^{-1-1/p',p}$, the equation
\[
 \A\eta-\divv(b\eta)=f
\]
has a unique weak solution $\eta\in\W_{\K}^{1/p,p}$ (see Definition \ref{def:weak}), and, with $H_p(b)$ and $G_p(b)$ as in \eqref{eq:Hp}-\eqref{eq:Gp}, the following Duhamel series representation holds:
\begin{equation}\label{eq:main-representation}
 \eta=\A^{-1/(2p)}(I+G_p(b)H_p(b))^{-1}
       \A^{-(1-1/(2p))}f.
\end{equation}
Moreover,
\begin{equation}\label{eq:apriori}
 \norm\eta_{\W_{\K}^{1/p,p}}
 \le \frac{1}{1-\norm{G_p(b)}_{p\rightarrow p}\norm{H_p(b)}_{p\rightarrow p}}
       \norm f_{\W_{\K}^{-1-1/p',p}};
\end{equation}
\item Let $\{b_n\}_n$ be any admissible regularization of $b$ as is introduced in Section \ref{sec:regularization}, and let $f \in C_c^\infty(\mathbb R^{1+2d})$. Then $\eta_n$ defined by \eqref{eq:main-representation} (with $b_n$ in place of $b$) is a classical solution to the equation
$$
 \A\eta_n-\divv(b_n\eta_n)=f,
$$
and
$$
\eta_n \rightarrow \eta \quad \text{ in } \W_{\K}^{1/p,p} \text{ as } n \rightarrow \infty.
$$
Thus, $\eta$ does not depend on the choice of an admissible regularization $\{b_n\}$ of $b$.
\item If
	\begin{equation}\label{eq:point-p-range}
		1<p<\frac{Q-1}{Q-2}=\frac{4d+1}{4d},
	\end{equation}
	then, for every $\zeta_0=(t_0,x_0,v_0)\in\R^{1+2d}$,
	the following Cauchy problem for the kinetic
	Fokker--Planck equation
	\begin{equation*}
		\begin{cases}
			\lambda\eta+\partial_t\eta+v\cdot\nabla_x\eta
			-\Delta_v\eta-\operatorname{div}_v(b\eta)=0,
			& t>t_0,\quad (x,v)\in\R^{2d},\\[1mm]
			\eta|_{t=t_0}=\delta_{\{(x_0,v_0)\}},
		\end{cases}
	\end{equation*}
	has a unique weak solution $\eta$ whose extension by zero
	to $t<t_0$ belongs to $\W_{\K}^{1/p,p}$.
	This extension corresponds to choosing
	$f=\delta_{\{\zeta_0\}}$ in~(i), and the convergence assertion
	in~(ii) remains valid.
	The resulting
	solution gives the forward fundamental solution issued from
	$\zeta_0$.
\end{enumerate}
\end{theorem}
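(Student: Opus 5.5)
The plan is to reduce the equation to a fixed-point problem on $L^p$ through the factorization $\eta=\A^{-1/(2p)}g$, and to rely on the bounds \eqref{eq:hbound}--\eqref{eq:gbound} from Proposition~\ref{prop:rellich} together with the smallness condition \eqref{eq:smallness}.

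\textbf{Part (i), existence.} Formally, if $\eta=\A^{-1/(2p)}g$ with $g\in L^p$, then $\A\eta-\divv(b\eta)=f$ becomes, after applying $\A^{-(1-1/(2p))}$,
\[
g-\A^{-(1-1/(2p))}\divv\bigl(b\,\A^{-1/(2p)}g\bigr)=\A^{-(1-1/(2p))}f .
\]
Writing $b=b^{1/p'}\abs b^{1/p}$ identifies the second term as $G_p(b)H_p(b)g$. The equation is therefore $(I+G_p(b)H_p(b))g=F$, where $F:=\A^{-(1-1/(2p))}f\in L^p$ and $\norm F_p=\norm f_{\W^{-1-1/p',p}_\K}$. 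By \eqref{eq:smallness}, $\norm{G_pH_p}_{p\to p}<1$, so the Neumann series inverts $I+G_pH_p$. This yields \eqref{eq:main-representation}, and \eqref{eq:apriori} follows because $\norm{\eta}_{\W^{1/p,p}_\K}=\norm g_p$.

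To see that this $\eta$ is a weak solution in the sense of Definition~\ref{def:weak}, I will test against $\varphi\in C_c^\infty$. The term $\langle b\eta,\nabla_v\varphi\rangle$ makes sense because it equals $\langle H_p g,\, b^{1/p'}\cdot\nabla_v\varphi\rangle$. Here $H_pg\in L^p$, and $b^{1/p'}\nabla_v\varphi\in L^{p'}$ since $\abs b^{1/p'\cdot p'}=\abs b\in L^1_{loc}$. The identity $\langle G_pH_pg,\psi\rangle=\langle H_pg,G_p^*\psi\rangle$ with $\psi=(\A^*)^{1-1/(2p)}\varphi$ converts the fixed-point equation into the weak formulation. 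The one technical point is that $(\A^*)^{1-1/(2p)}\varphi$ lies in $L^{p'}$, which follows from the Appendix on fractional powers. Equivalently, I can deduce weak solvability from (ii) by passing to the limit.

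\textbf{Part (i), uniqueness.} Let $\eta\in\W^{1/p,p}_\K$ be a weak solution with $f=0$, and set $g=\A^{1/(2p)}\eta\in L^p$. I will show that the weak formulation, extended by density from $C_c^\infty$ to test functions $\varphi=(\A^*)^{-(1-1/(2p))}\psi$ with $\psi\in L^{p'}$, gives $\langle g,\psi\rangle+\langle H_pg,G_p^*\psi\rangle=0$. Hence $(I+G_pH_p)g=0$, so $g=0$. The density extension uses boundedness of $G_p^*$ on $L^{p'}$ to pass to the limit in the drift term. I expect this step to be the main obstacle: one needs an approximating sequence $\varphi_k\to\varphi$ such that both $\A^{*}\varphi_k$ converges in $\W^{1-1/p,p'}_{\K^*}$ and $G_p^*$ applied to the corresponding $\psi_k$ converges in $L^{p'}$. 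My first attempt is to take $\psi_k\in C_c^\infty$ with $\psi_k\to\psi$ in $L^{p'}$, and then truncate and mollify $(\A^*)^{-(1-1/(2p))}\psi_k$ with a kinetic cutoff, controlling the commutator errors with Proposition~\ref{prop:kernels}(iii).

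\textbf{Part (ii).} For smooth bounded $b_n$, the solution $\eta_n$ given by the formula is a classical solution by standard hypoelliptic theory: the free resolvent maps $C_c^\infty$ into smooth functions, and a bounded smooth drift is handled by iterating Schauder or Sobolev estimates. Convergence $\eta_n\to\eta$ is obtained through $g_n-g=(I+G_p(b_n)H_p(b_n))^{-1}F-(I+G_p(b)H_p(b))^{-1}F$. Admissibility gives a uniform bound $\sup_n\norm{(I+G_p(b_n)H_p(b_n))^{-1}}<C$ together with strong convergence $G_p(b_n)H_p(b_n)\to G_p(b)H_p(b)$, since a product of strongly convergent, uniformly bounded operators converges strongly. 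The resolvent identity then gives strong convergence of the inverses, which proves independence of the regularization.

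\textbf{Part (iii).} Corollary~\ref{cor:point-evaluation} with $s=1+1/p'$ gives $\delta_{\{\zeta_0\}}\in\W^{-1-1/p',p}_\K$ as soon as $1+1/p'>Q/p'$, that is, $p'>Q-1$, which is exactly \eqref{eq:point-p-range}. Part (i) then produces $\eta$, and (ii) applies through the density of $C_c^\infty$ in $\W^{-1-1/p',p}_\K$ combined with the uniform bounds. To see that $\eta$ vanishes for $t<t_0$, I will use causality: each factor $\A^{-\gamma/2}$ and $\divv$, as well as the multiplications, preserves support in $\{t\ge t_0\}$, so every term of the Neumann series does too. Restricted to $t>t_0$, $\eta$ then solves the homogeneous equation with initial trace $\delta_{\{(x_0,v_0)\}}$, understood in the weak sense where $f=\delta_{\{\zeta_0\}}$ encodes the initial datum. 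Uniqueness in this class is inherited from (i).
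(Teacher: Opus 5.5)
Your proposal follows the paper's proof: the substitution $\eta=\A^{-1/(2p)}g$ turns the weak equation into $(I+G_p(b)H_p(b))g=\A^{-(1-1/(2p))}f$ (this is the paper's Lemma~\ref{lem:factorization}), which is then inverted by the Neumann series under \eqref{eq:smallness}; (ii) uses strong convergence of the inverses via the resolvent identity, where the paper gets classical regularity by checking H\"ormander's bracket condition for $\A-\divv(b_n\,\cdot)$ rather than by a bootstrap; and (iii) uses Corollary~\ref{cor:point-evaluation} together with causality of every Duhamel term. The uniqueness step you flag as the main obstacle needs no truncation or mollification, because Definition~\ref{def:weak} tests against all $\varphi\in\cS$ and, by Lemma~\ref{lem:app-fp-test-functions}, $(\A^*)^{-(1-1/(2p))}$ is an automorphism of $\cS$, so you may insert $\varphi=(\A^*)^{-(1-1/(2p))}\psi$ with $\psi\in\cS$ directly and obtain $\pair{(I+G_p(b)H_p(b))g}{\psi}=0$ on a dense subset of $L^{p'}$ (for the same reason, your existence check should be carried out for all $\varphi\in\cS$ rather than only for $C_c^\infty$).
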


The second assertion can be viewed as an approximation uniqueness result.

\begin{definition}\label{def:weak}
Let $f\in\W_{\K}^{1/p-2,p}$.  A function
$\eta\in\W_{\K}^{1/p,p}$ is a $\W_{\K}^{1/p,p}$-weak solution of
\begin{equation}\label{eq:forward}
 \A\eta-\divv(b\eta)=f
\end{equation}
if
\begin{equation}\label{eq:weak-distribution}
 \pair{\eta}{\A^*\varphi}
 +\int_{\R^{1+2d}} b\eta\cdot\nabla_v\varphi
 =\pair f\varphi
 \qquad\text{for every }\varphi\in\cS(\R^{1+2d}).
\end{equation}
\end{definition}

We may highlight that the second term on the left hand side of \eqref{eq:weak-distribution} is finite. Indeed, write
$h=\A^{1/(2p)}\eta\in L^p$.  Then, for $H_p$ as in \eqref{eq:Hp},
$\abs b^{1/p}\eta=\abs b^{1/p}\A^{-1/(2p)}(\A^{1/(2p)}\eta)=H_p(b) h\in L^p$.
Since $b\in L^1_{\mathrm{loc}}$, H\"older's inequality 
 gives $b\eta\in L^1_{\mathrm{loc}}$ and further
$$
\int_{\R^{1+2d}}|b\eta\cdot \nabla_v\varphi|\le \int_{\R^{1+2d}}|b||b|^{-1/p} |\nabla_v\varphi||H_p(b)h|\le \Big(\int_{\R^{1+2d}}\underbrace{|b|^{p'(1-1/p)}}_{=|b|}|\nabla_v\varphi|^{p'}\Big)^{1/p'} \|H_p(b)h\|_p.
$$

Using the definition of the Morrey norm and taking into account that the drift also has the bounded component, we conclude at most polynomial
growth of $\int_{B_R(0)}|b|$ as $R\to\infty$.
Therefore, $\int |b||\nabla_v\varphi|^{p'}<\infty$ for
$\varphi\in\cS$, and the same weighted H\"older inequality proves
absolute convergence of the drift pairing in
\eqref{eq:weak-distribution}.

\begin{lemma}[Factorization of the weak equation]\label{lem:factorization}
Let \eqref{eq:smallness} hold.  A function
$\eta\in\W_{\K}^{1/p,p}$ is a weak solution of \eqref{eq:forward} if and only
if $h=\A^{1/(2p)}\eta$ satisfies
\begin{equation}\label{eq:factorized-equation}
 (I+G_p(b) H_p(b))h=\A^{-(1-1/(2p))}f
 \qquad\text{in }L^p.
\end{equation}
\end{lemma}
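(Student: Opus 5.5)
The plan is to rewrite the weak formulation \eqref{eq:weak-distribution} entirely in terms of $h=\A^{1/(2p)}\eta\in L^p$ and then test against a dense class. Take $\varphi\in\cS$ and put $\psi:=(\A^*)^{1-1/(2p)}\varphi$, so that $\A^*\varphi=(\A^*)^{1/(2p)}\psi$. I will need that $\cS$ is invariant under $(\A^*)^{\pm\gamma/2}$, or at least that these operators map $\cS$ onto a set that is dense in $L^{p'}$; this should follow from Appendix~\ref{app:fractional-powers} by Fourier transform in the group variables.

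The three terms of \eqref{eq:weak-distribution} then transform as follows.

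\emph{Principal term.} By duality of fractional powers,
\[
\pair{\eta}{\A^*\varphi}=\pair{\A^{1/(2p)}\eta}{\psi}=\pair{h}{\psi}.
\]

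\emph{Source term.} Since $f\in\W_{\K}^{1/p-2,p}$, we have $\A^{-(1-1/(2p))}f\in L^p$, and
\[
\pair f\varphi=\pair{\A^{-(1-1/(2p))}f}{\psi}.
\]

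\emph{Drift term.} Write $b\eta=b^{1/p'}\,\abs b^{1/p}\eta=b^{1/p'}H_p(b)h$, as in the remark following Definition~\ref{def:weak}. Also
\[
\nabla_v\varphi=\nabla_v(\A^*)^{-(1-1/(2p))}\psi .
\]
Hence
\[
\int b\eta\cdot\nabla_v\varphi=\pair{H_p(b)h}{G_p^*(b)\psi}=\pair{G_p(b)H_p(b)h}{\psi}.
\]
The last equality uses the definition of the bounded extension of $G_p(b)$ together with the bound \eqref{eq:gbound}. It is justified first for $h$ in a dense class and then extended by continuity, using that $H_p(b)h\in L^p$ and that the pairing converges absolutely, as shown after Definition~\ref{def:weak}.

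Putting these together, \eqref{eq:weak-distribution} is equivalent to
\[
\pair{(I+G_p(b)H_p(b))h-\A^{-(1-1/(2p))}f}{\psi}=0
\qquad\text{for all }\psi\in(\A^*)^{1-1/(2p)}\cS .
\]
Both directions then follow. If $h$ solves \eqref{eq:factorized-equation}, this identity holds for all such $\psi$, so $\eta$ is a weak solution. Conversely, if $\eta$ is a weak solution, the identity holds for a class dense in $L^{p'}$, so the $L^p$ function on the left vanishes, which is \eqref{eq:factorized-equation}.

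The main obstacle is this density and invariance step. I need that $\{(\A^*)^{1-1/(2p)}\varphi:\varphi\in\cS\}$ is dense in $L^{p'}$, and that $G_p^*(b)\psi$ is meaningful and equal to $b^{1/p'}\cdot\nabla_v\varphi$ for these $\psi$.

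For density, I would argue as follows. The range of $\A^{*}$ on $\cS$ is dense in $L^{p'}$: if $g\in L^p$ annihilates it, then $\A g=0$ in $\cS'$, and since $\A$ is invertible on $L^p$ by \eqref{eq:power-Lp}, $g=0$. Then factor $\A^*=(\A^*)^{1/(2p)}(\A^*)^{1-1/(2p)}$ and use that $(\A^*)^{-1/(2p)}$ is bounded on $L^{p'}$. If full invariance of $\cS$ is unavailable, I would instead test \eqref{eq:weak-distribution} with $\varphi=(\A^*)^{-1}\chi$ for $\chi\in C_c^\infty$. To make this legitimate, I would approximate such $\varphi$ in the norm $\norm{\A^*\varphi}_{p'}$ and in the weighted norm $\int\abs b\abs{\nabla_v\varphi}^{p'}$. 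Control of the weighted norm would come from the adjoint Adams bound \eqref{eq:gbound}.
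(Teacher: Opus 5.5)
Your argument is correct and is essentially the paper's proof in dual form. The paper writes $b\eta=b^{1/p'}H_p(b)h$ and uses the duality definition of $G_p(b)$ against Schwartz functions to get $\A^{-(1-1/(2p))}\divv(b\eta)=-G_p(b)H_p(b)h$ in $\cS'$, then applies $\A^{-(1-1/(2p))}$ to the equation; this is exactly your substitution $\psi=(\A^*)^{1-1/(2p)}\varphi$. The step you flag as the main obstacle is settled by Lemma~\ref{lem:app-fp-test-functions}, which says $(\A^*)^{s}$ is a continuous automorphism of $\cS$. Hence $(\A^*)^{1-1/(2p)}\cS=\cS$, which is dense in $L^{p'}$, and your fallback via $\varphi=(\A^*)^{-1}\chi$ is unnecessary.
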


\begin{proof}
We have
\[
 \eta=\A^{-1/(2p)}h,
 \qquad b\eta=b^{\frac{1}{p'}}H_ph.
\]
Put $a=1-1/(2p)$.  The definition of $G_p(b)$ by duality, tested against
Schwartz functions, gives
\[
 \A^{-a}\divv(b\eta)=-G_p(b)H_p(b)h\in L^p.
\]
The fractional powers preserve Schwartz space, so this is an identity
of tempered distributions and it proves
$\divv(b\eta)\in \A^{a}L^p=\W_{\K}^{1/p-2,p}$, with
\[
 \|\divv(b\eta)\|_{\W_{\K}^{1/p-2,p}}
 \le\|G_p(b)\|_{p\rightarrow p}\,\|H_p(b)\|_{p\rightarrow p}\,\|\eta\|_{\W_{\K}^{1/p,p}}.
\]
We can therefore apply
\(\A^{-(1-1/(2p))}\) to \eqref{eq:forward}.  Since
\((1-1/(2p))+1/(2p)=1\), the first term becomes \(h\), while the second
becomes
$G_p(b)H_p(b)h$ by \eqref{eq:Gp}.  This proves \eqref{eq:factorized-equation}.
All operations are continuous between the Bessel potential spaces introduced
in Section \ref{sec:spaces}, so this argument is reversible.
\end{proof}

Define
\begin{equation}\label{eq:main-result-backward-factors}
 R_p(b)=\abs b^{1/p}\nabla_v\A^{-1/2-1/(2p)},\qquad
 Q_p(b)=\A^{-1/(2p')}\bigl(b^{\frac{1}{p'}}\cdot\bigr).
\end{equation}
Definition \ref{def:admissible}, i.e.\,the notion of an admissible regularization of $b$, and Proposition \ref{prop:regularization}, extend in a straightforward manner to the operators $R_p$, $Q_p$.

\begin{theorem}[Backward PDE theory]\label{thm:backward}
Assume $\norm{R_p(b)}_{p\rightarrow p}\norm{Q_p(b)}_{p\rightarrow p}<1$.  For every
$f\in\W_{\K}^{-1/p',p}$, the equation
\[
 \A u+b\cdot\nabla_vu=f
\]
has a unique weak solution $u\in\W_{\K}^{1+1/p,p}$ characterized by
\begin{equation}\label{eq:backward-representation}
 u=\A^{-1/2-1/(2p)}
      (I+Q_pR_p)^{-1}\A^{-1/(2p')}f.
\end{equation}
If $p>Q-1$, then $1+1/p>Q/p$, and the kinetic Sobolev embedding makes this
solution continuous, possibly after a modification on a measure zero set. 
\end{theorem}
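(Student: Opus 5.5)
The plan mirrors the forward theory (Theorem \ref{thm:main} and Lemma \ref{lem:factorization}), with the roles of the factors exchanged. Write $u=\A^{-1/2-1/(2p)}g$ with $g\in L^p$; this is exactly the statement $u\in\W_{\K}^{1+1/p,p}$. Then
\[
b\cdot\nabla_v u=b^{\frac1{p'}}\abs b^{1/p}\nabla_v\A^{-1/2-1/(2p)}g=b^{\frac1{p'}}\cdot R_p(b)g .
\]
We apply $\A^{-1/(2p')}$ to the equation $\A u+b\cdot\nabla_v u=f$, using that $1/2+1/(2p)+1/(2p')=1$. The first term becomes $\A^{1-1/(2p')}\A^{-1/2-1/(2p)}g=g$. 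The drift term becomes $Q_p(b)R_p(b)g$. The source term $\A^{-1/(2p')}f$ lies in $L^p$ precisely because $f\in\W_{\K}^{-1/p',p}$. So the weak equation should be equivalent to
\[
(I+Q_pR_p)g=\A^{-1/(2p')}f\quad\text{in }L^p .
\]
Under the hypothesis $\norm{R_p}\norm{Q_p}<1$, the Neumann series inverts $I+Q_pR_p$ on $L^p$. This gives existence, uniqueness and the representation \eqref{eq:backward-representation}, together with the a priori bound $\norm u_{\W^{1+1/p,p}_\K}\le(1-\norm{Q_p}\norm{R_p})^{-1}\norm f_{\W^{-1/p',p}_\K}$.

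The weak formulation I would adopt is
\[
\pair u{\A^*\varphi}+\int (b\cdot\nabla_v u)\varphi=\pair f\varphi,\qquad \varphi\in\cS .
\]
The second term is finite by the same weighted H\"older argument given after Definition \ref{def:weak}. It equals $\int b^{1/p'}\varphi\cdot R_pg$, and we bound it by $(\int\abs b\abs\varphi^{p'})^{1/p'}\norm{R_pg}_p$, using the polynomial growth of $\int_{B_R}\abs b$.

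The key step, and the main obstacle, is justifying the equivalence rigorously. One has to show that $b\cdot\nabla_v u$, a priori only in $L^1_{\rm loc}$, defines an element of $\W^{-1/p',p}_\K=\A^{1/(2p')}L^p$ with $\A^{-1/(2p')}(b\cdot\nabla_v u)=Q_pR_pg$. This is a duality computation. For $\psi\in\cS$, pairing with $(\A^*)^{-1/(2p')}\psi$, which is again a Schwartz-class (or at least admissible) test function by Appendix \ref{app:fractional-powers}, gives
\[
\int b\cdot\nabla_v u\,(\A^*)^{-1/(2p')}\psi=\pair{R_pg}{\abs b^{1/p'}(\A^*)^{-1/(2p')}\psi}=\pair{Q_pR_pg}{\psi}.
\]
Here $Q_p$ is bounded on $L^p$ by hypothesis, and this is consistent with the dual Adams bound $\norm{\abs b^{1/p'}(\A^*)^{-1/(2p')}}_{p'\to p'}$ of Proposition \ref{prop:rellich} type. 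The point to check is that pairing against the non-compactly supported test function $(\A^*)^{-1/(2p')}\psi$ is legitimate. I would handle this by approximating $\psi$ and using that the identity holds for the truncated, mollified drifts $b_n$ from Proposition \ref{prop:regularization} (adapted to $R_p,Q_p$), then passing to the limit via strong convergence of $R_p(b_n)$ and $Q_p(b_n)$. Once this identity holds, the manipulations are reversible as in Lemma \ref{lem:factorization}, since $\A^{\pm\gamma/2}$ are isomorphisms between the Bessel potential spaces of Section \ref{sec:spaces}.

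For the continuity claim, note that $p>Q-1$ gives $Q/p<Q/(Q-1)<1+1/p$; the inequality $Q/p<1+1/p$ is equivalent to $Q-1<p$. Then Proposition \ref{prop:sobolev}(ii), with $\gamma=1+1/p>Q/p$ (and, if $\gamma\ge Q$, after factorizing $\A^{-\gamma/2}$ and using Proposition \ref{prop:kernels}(iii) as in Corollary \ref{cor:point-evaluation}), shows that $u=\A^{-\gamma/2}g\in C_\infty(\R^{1+2d})$. This representative agrees with $u$ almost everywhere.
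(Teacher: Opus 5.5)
Your proposal is correct and follows essentially the same route as the paper. You write $u=\A^{-1/2-1/(2p)}g$, identify $\A^{-1/(2p')}(b\cdot\nabla_v u)=Q_pR_pg$ by duality, reduce the weak equation to $(I+Q_pR_p)g=\A^{-1/(2p')}f$ in $L^p$, invert by the Neumann series, and obtain continuity from Proposition \ref{prop:sobolev}(ii).

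There are three small points to fix. In your duality display the factor must be the polar field $b^{1/p'}$, not $\abs b^{1/p'}$. The chain $Q/(Q-1)<1+1/p$ is backwards, although your stated equivalence $Q/p<1+1/p\iff p>Q-1$ is correct. The regularization detour is unnecessary: $b^{1/p'}(\A^*)^{-1/(2p')}\psi=Q_p^*\psi\in L^{p'}$ and $R_pg\in L^p$, so the pairing against the Schwartz function $(\A^*)^{-1/(2p')}\psi$ is absolutely convergent by H\"older.
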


The notion of weak solution for  $(\A +b\cdot\nabla_v)u=f$ is analogous to the one in Definition \ref{def:weak}. The last statement is the pointwise regularity that we will use to establish the
Krylov estimates for the kinetic SDE.

\subsection{Weak solutions of the kinetic SDE}

We now apply the backward PDE theory to
\begin{equation}\label{eq:kinetic-sde}
 \begin{split}
  X_t&=x+\int_s^t V_r\,dr,\\
  V_t&=v-\int_s^t b(r,X_r,V_r)\,dr+\sqrt2\,(W_t-W_s),
  \qquad s\le t\le T.
 \end{split}
\end{equation}

Let $\Omega_{s,T}:=C([s,T], \R^{2d})$, 
$Z_t=(X_t,V_t)$ be the related canonical 
 process, and let $\mathcal F_t$ be the completed (with the usual conditions) version of $\sigma(Z_r:s\le r\le t)$.

\begin{definition}
\label{def:weak-sol}
Given $s\in[0,T]$, $z=(x,v)$, a weak
solution of \eqref{eq:kinetic-sde} is a probability
measure on $\Omega_{s,T}$, such that the canonical process $Z$ satisfies \eqref{eq:kinetic-sde}, for some standard $\mathcal F_t$-Brownian motion $\{W_t-W_s\}_{s\le t\le T}$ and such that
$\int_s^T\abs{b(r,Z_r)}\,dr \in L^1(\Omega_{s,T})$.
\end{definition}

The wellposedness of this weak solution is equivalently (e.g. \cite{StrookVaradhan-06}, \cite{EthierKurtz-05}) given by the martingale problem related to \eqref{eq:kinetic-sde} which states as follows.
\begin{definition}\label{def:martingale-problem} For $0\le s\le T$ and $z=(x,v)\in\mathbb R^{2d}$, a probability measure $P_{s,z}$ on $(\Omega_{s,T},\{\mathcal F_t\}_{t\in[s,T]})$ is said to be a solution of the martingale problem for $b$ at the initial time-state $(s,z)$ if under $\mathbb P_{s,z}$, $Z_s=z$ a.s., $\mathbb E_{s,z}[\int_s^T|b(r,Z_r)|\,dr]:=\mathbb E^{\mathbb P_{s,z}}[\int_s^T|b(r,Z_r)|\,dr]<\infty$ and, for all $\varphi\in C^1_b([s,T];\mathcal C^2_c(\mathbb R^{2d}))$ and for
$$
		L_r(\varphi)(r,x,v):=\partial_r\varphi(r,x,v)+v\cdot \nabla_x\varphi(r,x,v)-b(r,x,v)\cdot\nabla_v\varphi(r,x,v)+\triangle_v\varphi(r,x,v),
		$$	
the process
		$$
		M^\varphi_t=\varphi(t,Z_t)-\varphi(s,z)-\int_s^tL_r\varphi(r,Z_r)\,dr,\qquad s\le t\le T,
		$$
		is an $\mathcal F_t$-martingale. 
\end{definition}

We now call
an exponent $p$ \textit{backward admissible} when the adjoint counterparts of
the two operators in \eqref{eq:main-result-backward-factors} have product norm
strictly smaller than one; see \eqref{eq:adjoint-backward-factors} and
\eqref{eq:backward-smallness} in Section \ref{sec:sde}. This is again guaranteed by sufficiently
small $\norm{b_{\mathrm s}}_{\M^q_{\K}}$, after increasing $\lambda$ when a
bounded component is present.

We first state a special version of our SDE weak well-posedness result whose proof is shorter and demonstrates the
main idea. Here the drift has no separate bounded
component and satisfies
\begin{equation}\label{eq:HK1}
 b=b_{\mathrm s}\in\M^q_{\K}\cap
 L^{1+\varepsilon_0}(\R^{1+2d},\R^d)
 \quad\text{for some }\varepsilon_0>0.\tag{$H_{\K,\eps_0}$}
\end{equation}
Recall the notation $Q=4d+2$.

\begin{theorem}[Weak SDE theory under global higher integrability of the drift]
\label{thm:sde}
Let $d\ge2$, $T>0$, and assume \eqref{eq:HK1}.  Let $p$ and $p_1$ be
backward admissible and suppose
\begin{equation}\label{eq:sde-exponents}
 p>Q-1,\qquad p_1>p\frac q{q-1}.
\end{equation}
Set $\varepsilon_*=\varepsilon_0/p$.  Then, for every starting point
$(s,z)$, the following hold.
\begin{enumerate}[label=\textup{(\roman*)}]
\item There exists a weak solution $\mathbb P_{s,z}$ of
\eqref{eq:kinetic-sde}.  It satisfies
\begin{equation}\label{eq:drift-integrability}
 \mathbb E_{s,z}\int_s^T\abs{b(r,Z_r)}^{1+\varepsilon_*}\,dr\le C_T,
\end{equation}
and hence, for all $s\le u<t\le T$,
\[
 \mathbb E_{s,z}\int_u^t\abs{b(r,Z_r)}\,dr
 \le C_T^{1/(1+\varepsilon_*)}
      \abs{t-u}^{\varepsilon_*/(1+\varepsilon_*)}.
\]
It also satisfies the unweighted and drift-weighted kinetic Krylov bounds
\begin{align*}
	\mathbb E_{s,z}\int_s^T\abs{h(r,Z_r)}\,dr
	&\le C_T\norm{\mathbf1_{[s,T]}h}_p,\\
	\mathbb E_{s,z}\int_s^T\abs{b(r,Z_r)h(r,Z_r)}\,dr
	&\le C_T\norm{\mathbf1_{[s,T]}\abs b^{1/p}h}_p.
\end{align*}

\item The transition operators
$$P^{s,t}g(z)=\mathbb E_{s,z}g(Z_t)$$ form a conservative, strongly
continuous backward Feller propagator on $C_\infty(\R^{2d})$, and so $\{\mathbb P_{s,z}\}$
is a strong Markov family.

\item Weak uniqueness holds among solutions satisfying the two Krylov bounds
\begin{align*}
	\mathbb E^{\mathbb Q}_{s,z}\int_s^T\abs{h(r,Z_r)}\,dr
	&\le K_{\mathbb Q}\norm{\mathbf1_{[s,T]}h}_p,\\
	\mathbb E^{\mathbb Q}_{s,z}\int_s^T
	\abs{b(r,Z_r)h(r,Z_r)}\,dr
	&\le K_{\mathbb Q}
	\norm{\mathbf1_{[s,T]}\abs b^{1/p}h}_p,
\end{align*}
for $K_{\mathbb Q}$ a constant independent of $h$.

\item If the smallness condition also holds at some
$$1<r<(Q-1)/(Q-2),$$ let $\eta_{s,z}^{\lambda}$ be the
point-source solution given by Theorem \ref{thm:main} (taking $f=\delta_{\{(s,z)\}}$). Then the discounted
space--time occupation measure $(u,A)\in[s,T]\times\mathcal B(\mathbb R^{2d})
\longmapsto
\int_s^u e^{-\lambda(t-s)}
\mathbb P_{s,z}(Z_t\in A)\,dt$  satisfies
\begin{equation*}
e^{-\lambda(t-s)}\mathbb P_{s,z}(Z_t\in d\zeta)\,dt
=\eta_{s,z}^{\lambda}(t,\zeta)\,dt\,d\zeta.
\end{equation*}
In particular, for a.e.\,$t\in(s,T)$, the law of $Z_t$
has density $e^{\lambda(t-s)}\eta_{s,z}^{\lambda}(t,\cdot)$.
\end{enumerate}
\end{theorem}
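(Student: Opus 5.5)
We construct solutions by regularization and compactness, and use the backward theory (Theorem~\ref{thm:backward}) for the estimates, the Feller property and uniqueness. Take an admissible regularization $\{b_n\}$ from Proposition~\ref{prop:regularization}, adapted to the backward factors $R_p,Q_p$ and their adjoints. Since $b\in L^{1+\varepsilon_0}$, we also arrange $b_n\to b$ in $L^{1+\varepsilon_0}$, e.g.\ by truncation and group mollification. For smooth bounded $b_n$ the SDE has a unique strong solution with law $\mathbb P^n_{s,z}$.

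\emph{Uniform Krylov bounds.} For $h\in C_c^\infty$ let $u_n$ solve $\A u_n+b_n\cdot\nabla_v u_n=|h|$ with a terminal convention on $[s,T]$. After time reversal, $\A^*$ plays the role of the backward operator; this is why the adjoint factors appear in backward admissibility. Itô's formula gives $\mathbb E^n_{s,z}\int_s^T e^{-\lambda(r-s)}|h|(r,Z_r)\,dr\le u_n(s,z)$. Since $p>Q-1$, Theorem~\ref{thm:backward} together with Proposition~\ref{prop:sobolev}(ii) gives $\sup|u_n|\le C\|u_n\|_{\W^{1+1/p,p}_\K}\le C\|\mathbf 1_{[s,T]}h\|_{\W^{-1/p',p}_\K}\le C\|\mathbf 1_{[s,T]}h\|_p$, uniformly in $n$. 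The factor $e^{\lambda T}$ is absorbed into $C_T$.

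For the drift-weighted bound, write $|b_n h|=|b_n|^{1/p'}\cdot|b_n|^{1/p}|h|$ and treat it as the source $|b_n|^{1/p'}g$ with $g=|b_n|^{1/p}|h|\in L^p$. The key point is that $Q_p$-type boundedness, i.e.\ an Adams estimate as in Proposition~\ref{prop:rellich}, shows $|b|^{1/p'}g\in\W^{-1/p',p}_\K$ with norm $\lesssim\|g\|_p$. This gives the second bound. Taking $h=|b_n|^{\varepsilon_*}$ localized to $[s,T]$ and using $p\varepsilon_*=\varepsilon_0$, we get $\||b_n|^{1/p}h\|_p^p=\int|b_n|^{1+\varepsilon_0}$ bounded, which yields \eqref{eq:drift-integrability} uniformly in $n$. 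Hölder then gives the modulus bound for $\int_u^t|b|$. With it, tightness of $\{\mathbb P^n_{s,z}\}$ on $\Omega_{s,T}$ follows from Kolmogorov/Aldous criteria applied to $V$, since $V-\sqrt2 W$ has uniformly integrable increments, and then to $X$.

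\emph{Passage to the limit.} Let $\mathbb P^{n_k}\Rightarrow\mathbb P$. Lower semicontinuity gives the Krylov bounds for $\mathbb P$ with continuous $h$, and density extends them. To identify the martingale problem (Definition~\ref{def:martingale-problem}), we must show $\mathbb E^{n}\int|b_n-b|(r,Z_r)|\nabla_v\varphi|\to0$ and that $\int b\cdot\nabla_v\varphi(Z_r)\,dr$ is continuous in the limit. Split $b=b\mathbf 1_{|b|\le M}+b\mathbf 1_{|b|>M}$ and approximate the bounded part by continuous functions in $L^{p_1}_{\rm loc}$. The exponent condition $p_1>pq/(q-1)$ lets us control $\||b|^{1/p}(b_n-b)\|_p$-type weighted errors via Hölder against $|b|\in L^q_{\rm loc}$; the weighted Krylov bound then makes all error terms uniformly small. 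This limit identification is the main obstacle, because the weighted Krylov norm depends on $b$ itself and must be compared with $b_n$.

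\emph{Markov property and uniqueness.} For (ii), $P^{s,t}g$ is obtained from the backward solutions with terminal data $g$. The continuity in Theorem~\ref{thm:backward}, convergence as $n\to\infty$ (backward analogue of Theorem~\ref{thm:main}(ii)), and Gaussian-type decay give $C_\infty$-invariance and strong continuity. Markov and strong Markov properties then follow by the standard argument. For (iii), let $\mathbb Q$ satisfy the Krylov bounds. Apply Itô to $u_n$ under $\mathbb Q$; the error term $\mathbb E^{\mathbb Q}\int(b-b_n)\cdot\nabla_vu_n$ is bounded by $K_{\mathbb Q}\||b|^{1/p}\nabla_v u_n\,(b-b_n)/|b|\|_p$, which tends to $0$ because $R_p(b)$ is bounded and $u_n\to u$. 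Hence $\mathbb E^{\mathbb Q}\int e^{-\lambda r}f(r,Z_r)\,dr=u(s,z)$ for all $f$, so the one-dimensional laws agree, and by the Markov iteration the full laws agree. For (iv), test the occupation measure against $\varphi\in\cS$: by (iii)/(i) it equals the weak solution of the forward equation with $f=\delta_{\{(s,z)\}}$, and the uniqueness in Theorem~\ref{thm:main}(iii) identifies it with $\eta^\lambda_{s,z}$.
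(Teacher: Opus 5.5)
Your existence and Krylov-bound steps follow the paper's route closely. The uniqueness step (iii), however, has a genuine gap. You bound the error $\mathbb E^{\mathbb Q}\int (b-b_n)\cdot\nabla_v u_n\,dr$ by $K_{\mathbb Q}\norm{\abs b^{1/p}\nabla_v u_n\,(b-b_n)/\abs b}_p$, and argue that it vanishes because $\abs b^{1/p}\nabla_v u_n=\widehat R_p(b)h_n$ converges in $L^p$. But the multiplier $\abs{b-b_n}/\abs b$ is not bounded. Since $b_n$ is a mollification, it is nonzero, and can be arbitrarily large, where $\abs b$ is small. On $\{b=0\}$ the weighted Krylov bound controls nothing at all. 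An $L^p$-convergent sequence times an a.e.\ vanishing but unbounded multiplier need not tend to zero.

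Falling back on the unweighted bound does not help directly either. H\"older would require $b_n-b\to0$ in $L^a_{\mathrm{loc}}$ with $1/a=1/p-1/p_1$, while $a>p>Q-1$ and $b$ is only in $L^q_{\mathrm{loc}}$, with $q$ possibly close to $1$.

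The missing idea is the paper's splitting, which exploits the specific form $b_n=\rho_{\eps_n}*(a_nb)$: write $b_n-b=(a_n-1)b+(b_n-a_nb)$.
\begin{itemize}
\item For $n$ large, on a fixed compact cylinder, the truncation part satisfies $\abs{(a_n-1)b}\le\abs b\,\mathbf 1_{\{\abs b>n\}}$. So the weighted bound applies; your $\widehat R_p(b)$-argument actually works for this piece, as does the paper's H\"older argument with $a/p<q$.
\item The mollification part is handled by the unweighted bound, giving $K_{\mathbb Q}\norm{b_n-a_nb}_a\norm{\nabla_vu_n}_{L^{p_1}}$. This needs the uniform $L^{p_1}$ gradient bound from backward admissibility at $p_1$ (Corollary~\ref{cor:backward-stability}). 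It also needs a diagonal choice of the mollification scale with $\norm{\rho_{\eps_n}*(a_nb)-a_nb}_a\le2^{-n}$, which is possible because $a_nb$ is bounded and compactly supported.
\end{itemize}
This is exactly where the hypothesis on $p_1$ is used. In your proposal $p_1$ is invoked instead in the existence step, where it is not needed. There, the weighted Krylov estimate with a fixed test weight $k=\chi(b_n-b)\in\M^q_\K\cap L^1$ and $h=1$ already gives $C\norm k_{\M^q_\K}^{1/p'}\norm k_1^{1/p}\to0$.

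Three further steps do not go through as written.
\begin{itemize}
\item In (iv), uniqueness in Theorem~\ref{thm:main} holds only in the class $\W^{1/r,r}_\K$. The Krylov bounds give the occupation measure an $L^{p'}$ density, not membership in $\W^{1/r,r}_\K$. The paper avoids this by applying Theorem~\ref{thm:main}(ii)--(iii) to the regularized point-source solutions, $\eta_n\to\eta^\lambda_{s,z}$ in $\W^{1/r,r}_\K$. It then identifies their distributional limit with the discounted occupation measure via $\mathbb P^n_{s,z}\Rightarrow\mathbb P_{s,z}$.
\item In (ii), terminal data $g$ at time $t$ corresponds to the source $\delta_t\otimes g$, which is not in $\W^{-1/p',p}_{\K^*}$. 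The time factor $\tau^{-1/2-1/(2p)}$ of the kernel of $(\A^*)^{-1/(2p')}$ has a $p$-th power that is not integrable at $\tau=0$. So Theorem~\ref{thm:backward} does not directly produce $P^{s,t}g$. The paper instead obtains the Feller property from tightness uniform in the starting point plus conditional uniqueness. Strong continuity comes from $Z_t=\Phi_hz+R_{s,t}$ together with the drift-integrability bound.
\item Passing the drift-weighted Krylov bound to the limit is not a lower-semicontinuity statement, since $\abs b$ is discontinuous. The paper handles it with occupation densities bounded in $L^{p'}(\abs k\,d\zeta)$ for fixed test weights $k$.
\end{itemize}
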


In the next theorem we remove the global integrability condition \eqref{eq:HK1}  with only a few additional efforts (see Section \ref{sec:local-removal}). In particular, we will have to use some polynomial weights. The bounded part of the drift will be absorbed by the choice of sufficiently large $\lambda$ in the PDE theory of the corresponding Kolmogorov backward equation. Introducing 
\begin{align*}
		\mathfrak r_{p,\lambda}^{\rho}(b)
		&:=C^{R,\mathrm s}_{d,p,q,\kappa}
		\norm{b_{\mathrm s}}_{\M^q_{\K}}^{1/p}
		+C^{R,\mathrm b}_{d,p,\kappa}\lambda^{-1/(2p)}
		\norm{b_{\mathrm b}}_\infty^{1/p},\\
		\mathfrak q_{p,\lambda}^{\rho}(b)
		&:=C^{Q,\mathrm s}_{d,p,q,\kappa}
		\norm{b_{\mathrm s}}_{\M^q_{\K}}^{1/p'}
		+C^{Q,\mathrm b}_{d,p,\kappa}\lambda^{-1/(2p')}
		\norm{b_{\mathrm b}}_\infty^{1/p'},
\end{align*}
for some constants $C^{R,\mathrm s}_{d,p,q,\kappa}$, $C^{R,\mathrm b}_{d,p,\kappa}$, $C^{Q,\mathrm s}_{d,p,q,\kappa}$ and $C^{Q,\mathrm b}_{d,p,\kappa}$ (see Lemma \ref{lem:weighted-backward}), we have the following wellposedness result.

\begin{theorem}[Weak SDE theory without global integrability]
\label{thm:sde-local}
Let $b$ satisfy \eqref{eq:HK}.  Suppose $p>Q-1$ is backward admissible
and satisfies the centered weighted product condition
$$
\mathfrak q_{p,\lambda}^{\rho}(b)
\mathfrak r_{p,\lambda}^{\rho}(b)<1,
$$
and let $p_1$ be backward admissible with
\[
 p_1>p\frac q{q-1}.
\]
Choose
\begin{equation*}
 0<\varepsilon_*<\min\left\{1,\frac{q-1}{p}\right\}.
\end{equation*}
Then all four conclusions $(i)$ to $(iv)$ of Theorem \ref{thm:sde} hold, with this value of
$\varepsilon_*$.  In particular, weak existence and the tightness
argument require no global $L^a$ assumption on $b$.  No smallness is imposed
on $b_{\mathrm b}$.
\end{theorem}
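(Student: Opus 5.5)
The plan is to reduce Theorem~\ref{thm:sde-local} to the proof of Theorem~\ref{thm:sde}. Global integrability $b\in L^{1+\varepsilon_0}$ was used there in only two places: to get the a priori bound \eqref{eq:drift-integrability}, which yields tightness of the approximating laws, and to pass to the limit in the drift term. Both uses are local in nature, so I would replace global $L^p$ norms by polynomially weighted ones. Fix a centered weight $\rho(\zeta)=(1+|x|^2+|v|^2+t^2)^{-\kappa}$ with $\kappa$ large; this is the weight behind the constants in $\mathfrak r^\rho_{p,\lambda},\mathfrak q^\rho_{p,\lambda}$, which I take from Lemma~\ref{lem:weighted-backward}. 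Under $\mathfrak q^\rho_{p,\lambda}\mathfrak r^\rho_{p,\lambda}<1$, the Duhamel representation \eqref{eq:backward-representation} converges in $\rho$-weighted $L^p$. Since $p>Q-1$, the Sobolev embedding of Proposition~\ref{prop:sobolev}(ii), applied locally after multiplying by $\rho$, gives that the solution $u$ is continuous, with $|u(s,z)|$ controlled by the weighted norm of $f$ at the cost of a polynomial factor in $|z|$.

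The key steps are as follows.

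\textbf{Step 1 (weighted backward theory).} I would check that conjugating by $\rho$ changes $\A$ only by lower-order terms. The commutators $[\A,\rho^{-1}]\rho$ involve $v\cdot\nabla_x\rho/\rho$ and $\nabla_v\rho/\rho$, which are bounded because the weight is polynomial. These terms are absorbed into the bounded part of the drift, which is why large $\lambda$ handles $b_{\mathrm b}$ with no smallness required.

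\textbf{Step 2 (Krylov bounds along the regularization).} Take an admissible regularization $b_n$ from Proposition~\ref{prop:regularization} and the classical solutions $\mathbb P^n_{s,z}$. Applying It\^o's formula to $e^{-\lambda t}u_n(t,Z_t)$ gives the weighted Krylov bounds uniformly in $n$, with constants depending on $z$ only locally. The drift-weighted version follows by using $R_p,Q_p$ with $f=b\cdot h$.

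\textbf{Step 3 (replacement of \eqref{eq:drift-integrability}).} Apply the drift-weighted Krylov bound at exponent $p_1$ with $h=|b|^{\varepsilon_*}\mathbf 1_{B_R}$. H\"older's inequality in the local $L^q$ norm uses $p_1>pq/(q-1)$ and $\varepsilon_*p<q-1$, so $|b|^{1+\varepsilon_*}$ is locally controlled by the Morrey norm. The region outside $B_R$ is handled by a moment bound $\mathbb E\sup_t|Z_t|^m\le C$, which I would derive from the same Krylov estimate applied to $|b|\mathbf 1_{B_R^c}\rho^{-1}$ together with Gronwall's lemma. Together these give uniform control of $\mathbb E\int|b_n(Z)|^{1+\varepsilon_*}$, and hence the modulus-of-continuity bound and tightness.

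\textbf{Step 4 (limit passage and the remaining conclusions).} Pass to the limit in the martingale problem using the $L^q_{\rm loc}$ convergence of $b_n$, the uniform Krylov bounds, and the localization. Conclusions (ii)--(iv) then follow verbatim from the proof of Theorem~\ref{thm:sde}, with the uniqueness argument localized by stopping times at exit from large balls.

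The main obstacle I expect is Step 3, specifically the moment bound outside large balls. Without global integrability, the only control on $|b|$ at infinity is the Morrey norm plus the bounded part, which gives polynomial growth of $\int_{B_R}|b|$. The tail therefore has to be controlled by the weighted Krylov estimate, and this requires $\kappa$ to be large enough that $\rho^{-1}$ dominates the growth of $\int_{B_R}|b|$, yet compatible with the weighted product condition. My first attempt would be to choose $\kappa$ just above the growth exponent of $\int_{B_R}|b|$ coming from the Morrey scaling and then verify that Lemma~\ref{lem:weighted-backward} still applies.
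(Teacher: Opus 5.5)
Your overall reduction is the right one: a weighted backward theory yields a higher-power Krylov bound $\mathbb E\int|b_n(Z^n)|^{1+\varepsilon_*}\le C$, after which Steps 1--5 of Theorem~\ref{thm:sde} are rerun. However, the weight you chose breaks conclusion (ii).

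\textbf{The weight must be centered at the starting point.} With $\rho$ fixed at the origin, you explicitly accept constants growing polynomially in $|z|$. The Feller part of Theorem~\ref{thm:sde} is not local in $z$.
\begin{itemize}
\item Strong continuity is derived from $\sup_z\mathbb E_{s,z}|R_{s,t}|\le C_T(h^{\varepsilon_*/(1+\varepsilon_*)}+h^{1/2})$.
\item The invariance $P^{s,t}C_\infty\subset C_\infty$ is derived from $\sup_{s,t,z}\mathbb E_{s,z}|R_{s,t}|\le C_T$. The Markov-inequality bound on $\mathbb P_{s,z}(|Z_t|\le R)$ needs $\mathbb E_{s,z}|R_{s,t}|=o(|z|)$.
\end{itemize}
A bound of the form $C(1+|z|)^m h^{\alpha}$ gives neither, so (ii) does not follow ``verbatim''.

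The missing idea is to center the weight at the starting point: $\rho_{\zeta_0}(\zeta)=(1+d_*(\zeta,\zeta_0))^{-\kappa}$ with $\zeta_0=(s,z)$ and $\kappa>Q$. This is the weight behind $\mathfrak r^\rho_{p,\lambda}$ and $\mathfrak q^\rho_{p,\lambda}$ in Lemma~\ref{lem:weighted-backward}, not a Euclidean weight at $0$. Centering gives three things.
\begin{itemize}
\item Left invariance of $d_*$, of the kernel bounds, and of the Morrey norm makes the weighted bounds on $\widehat R_p,\widehat Q_p$ uniform in $\zeta_0$.
\item Dyadic annuli together with $\int_{B_R(\zeta_0)}|b_{\mathrm s}|^a\lesssim\norm{b_{\mathrm s}}_{\M^q_\K}^aR^{Q-a}$ give $\sup_{\zeta_0}\int|b|^a\rho_{\zeta_0}<\infty$ for $a\le q$ (Lemma~\ref{lem:weighted-morrey-integrability}).
\item Since $\rho_{\zeta_0}(\zeta_0)=1$, point evaluation at $(s,z)$ in \eqref{eq:weighted-point-evaluation} costs nothing.
\end{itemize}
Together these give Lemma~\ref{lem:higher-power-krylov} with a bound uniform in $n,s,z$, which is what Step~4 of Theorem~\ref{thm:sde} consumes.

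\textbf{Your Step~3 does not work as written, and is unnecessary.} Feeding $|b|\mathbf 1_{B_R^c}\rho^{-1}$ into the weighted estimate puts $\int_{B_R^c}|b|\rho^{1-p}$ on the right-hand side. This diverges, because $\rho^{1-p}$ grows polynomially, so the proposed moment bound and Gronwall argument have no input.

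No splitting into $B_R$ and $B_R^c$ is needed. Write $|b_n|^{1+\varepsilon_*}=b_n^{1/p'}\cdot\ell_n$ with $\ell_n=\sgn(b_n)|b_n|^{1/p+\varepsilon_*}$. Then $u_n=(\A^*)^{-1/2-1/(2p)}(I+\widehat Q_p\widehat R_p)^{-1}\widehat Q_p(b_n)\ell_n$, and
\[
|u_n(s,z)|\lesssim\norm{\ell_n}_{L^p_{\rho_{\zeta_0}}}=\Bigl(\int|b_n|^{1+p\varepsilon_*}\rho_{\zeta_0}\Bigr)^{1/p},
\]
which is bounded because $1+p\varepsilon_*<q$. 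This is where $\varepsilon_*<(q-1)/p$ enters.
\begin{itemize}
\item The condition $p_1>pq/(q-1)$ plays no role here. It is used only in the uniqueness step, via $1/a=1/p-1/p_1$ and $a/p<q$.
\item The ``main obstacle'' you identify is therefore not one: $\kappa>Q$ suffices, and the product condition is a hypothesis with $\kappa$-dependent constants.
\end{itemize}

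\textbf{A smaller issue in Step~1.} Conjugating $b\cdot\nabla_v$ by the weight produces the zeroth-order term $b\cdot\nabla_v(\rho^{-1/p})\,\rho^{1/p}$. Since $b$ is singular, this term cannot be ``absorbed into the bounded part of the drift'' and needs separate treatment. The paper avoids it by conjugating the kernels directly. It uses their superpolynomial off-diagonal decay for $\lambda\ge1$, the Adams estimate for the singular part, and a Schur test for the bounded part.
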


\bigskip

\section{Counterexample to weak existence for the kinetic SDE (blow-up)}
\label{sec:counterexample}

Weak existence for the SDE \eqref{eq:kinetic-sde} may fail when the kinetic Morrey norm $\|b_{\mathrm s}\|_{\M^q_\K}$ of the drift is too large. The following position-velocity dependent modification of the
classical Hardy drift counterexample, cf.\
\cite[Example~7.4]{BFGM} (in a non-degenerate case), strongly concentrates the dynamics on the singularity set 
$\{v=x\}$. 

\begin{proposition}[An inward singularity along $\{v=x\}$]
\label{prop:hardy-nonexistence}
Let $d\ge2$ and $\kappa>0$, and define
\begin{equation*}
 b_\kappa(t,x,v)=
 \begin{cases}
  \displaystyle\kappa\frac{v-x}{|v-x|^2},&v\ne x,\\[1ex]
  0,&v=x.
 \end{cases}
\end{equation*}
Then $b_\kappa\in\M^q_{\K}$ for every $1<q<d$, with
\begin{equation}\label{eq:hardy-velocity-morrey}
 \norm{b_\kappa}_{\M^q_{\K}}\le C_{d,q}\kappa.
\end{equation}
If $\kappa\ge d-1$, then
the SDE \eqref{eq:kinetic-sde} with $b=b_\kappa$ and initial data
$(s,x,v)=(0,0,0)$ has no weak solution on any interval $[0,T]$,
$T>0$.

The same nonexistence result 
holds for the compactly
supported drift
\begin{equation*}
 b_{\kappa,\chi}=\chi b_\kappa,
 \qquad
 \chi\in C_c^\infty(\R^{1+2d}),\quad 0\le\chi\le1,
\end{equation*}
provided that $\chi=1$ on a neighborhood of $(0,0,0)$.
This drift satisfies
\begin{equation}\label{eq:localized-hardy-membership}
 b_{\kappa,\chi}\in\M^q_{\K}\cap
 L^{1+\varepsilon_0}(\R^{1+2d}, \R^d),
 \qquad
 1<q<d,\quad 0<\varepsilon_0<d-1.
\end{equation}
\end{proposition}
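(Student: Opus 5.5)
The plan has two parts: the Morrey bound \eqref{eq:hardy-velocity-morrey}, which is a scaling computation, and the nonexistence statement, which I would prove by a one-dimensional radial argument for $Y_t:=V_t-X_t$.

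\emph{Morrey bound.} Since $\abs{b_\kappa}=\kappa\abs{v-x}^{-1}$, it suffices to show $\int_{B_r(\zeta)}\abs{w-y}^{-q}\,d\xi\lesssim r^{Q-q}$, uniformly in $\zeta$ and $r$. By the inclusion $B_r(\zeta)\subset\cC_{\sqrt2 r}(\cdot)$ from Section~\ref{sec:geometry}, it is enough to integrate over a cylinder $\cC_{\rho}(t_0,x_0,v_0)$ with $\rho\simeq r$. For fixed $s$, the cylinder is the product of $\{w:\abs{w-v_0}<\rho\}$ with a $y$-ball of radius $\rho^3$, because the $y$-constraint uses the centre velocity $v_0$. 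For fixed $(s,y)$, the rearrangement inequality gives
\[
\int_{\abs{w-v_0}<\rho}\abs{w-y}^{-q}\,dw\le\int_{\abs{w}<\rho}\abs{w}^{-q}\,dw=C_{d,q}\rho^{d-q},
\]
which is finite since $q<d$. Integrating over $s$ (length $\rho^2$) and $y$ (volume $\simeq\rho^{3d}$) gives $\rho^{2+3d+d-q}=\rho^{Q-q}$. Dividing by $\abs{B_r}\simeq r^Q$, taking the $1/q$ power and multiplying by $r$ yields \eqref{eq:hardy-velocity-morrey}. Multiplying by $\chi$ does not increase the Morrey norm. Local integrability of $\abs{v-x}^{-1-\varepsilon_0}$ for $1+\varepsilon_0<d$, together with compact support, gives \eqref{eq:localized-hardy-membership}.

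\emph{Nonexistence.} Suppose $\mathbb P$ is a weak solution from $(0,0,0)$, so that $A_t:=\int_0^t\abs{b_\kappa(Z_r)}\,dr<\infty$ a.s. and $A_t\downarrow0$ as $t\downarrow0$. Then $Y_0=0$ and
\[
dY_t=-\kappa\frac{Y_t}{\abs{Y_t}^2}\mathbf 1_{\{Y_t\ne0\}}\,dt-V_t\,dt+\sqrt2\,dW_t.
\]
Since each component of $Y$ has martingale part $\sqrt2 W^j$, the occupation-times formula gives $\mathrm{Leb}\{r:Y_r=0\}=0$ a.s. I would apply It\^o's formula to $g_\varepsilon(Y)=(\abs Y^2+\varepsilon^2)^{1/2}$. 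Its drift is
\[
\frac{d-1}{g_\varepsilon}+\frac{\varepsilon^2}{g_\varepsilon^3}-\frac{\kappa}{g_\varepsilon}\mathbf 1_{\{Y\ne0\}}-\frac{Y\cdot V}{g_\varepsilon}.
\]
All the $1/g_\varepsilon$-type terms are dominated by $\abs Y^{-1}\mathbf 1_{\{Y\ne0\}}$, which is pathwise integrable because $\abs{b_\kappa}=\kappa/\abs Y$. The term $\varepsilon^2/g_\varepsilon^3$ tends to $0$ off $\{Y=0\}$, a null time set. Dominated convergence therefore gives, with $\hat Y=Y/\abs Y$ (any fixed unit vector on $\{Y=0\}$) and $\beta_t=\int_0^t\hat Y_r\cdot dW_r$ a one-dimensional Brownian motion by L\'evy's characterization,
\[
\abs{Y_t}=-(\kappa-d+1)\int_0^t\frac{dr}{\abs{Y_r}}-\int_0^t\hat Y_r\cdot V_r\,dr+\sqrt2\,\beta_t.
\]
This identity has no local-time term, which is where $d\ge2$ enters. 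Since $\kappa\ge d-1$, the first term is nonpositive. Next, $\hat Y\cdot V=\abs Y+\hat Y\cdot X\ge-\abs X$. The bounds $\abs{V_r}\le A_r+\sqrt2\sup_{u\le r}\abs{W_u}$ and $\abs{X_r}\le r\sup_{u\le r}\abs{V_u}$ then give
\[
0\le\abs{Y_t}\le t^2\bigl(A_t+\sqrt2\sup_{u\le t}\abs{W_u}\bigr)+\sqrt2\,\beta_t .
\]
The first term on the right is $o(\sqrt t)$ a.s. as $t\downarrow0$. By the law of the iterated logarithm, $\liminf_{t\downarrow0}\beta_t/\sqrt{2t\log\log(1/t)}=-1$, so the right-hand side is negative for arbitrarily small $t$ almost surely. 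This is a contradiction, so no weak solution exists on any $[0,T]$.

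For $b_{\kappa,\chi}$ the same computation applies up to the exit time of $Z$ from the neighbourhood where $\chi=1$. This exit time is a.s. positive by continuity, and the argument only uses small $t$.

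\emph{Main obstacle.} The delicate step is justifying the It\^o identity for $\abs{Y}$ through the singularity, namely that the $\varepsilon\to0$ limit produces no local-time contribution at $\{Y=0\}$. This rests on two facts: the occupation time of $\{Y=0\}$ vanishes, and the a priori integrability of $\abs Y^{-1}$ that is part of the definition of a weak solution supplies the dominating function. The borderline case $\kappa=d-1$ is what forces the use of the law of the iterated logarithm rather than a simple expectation argument.
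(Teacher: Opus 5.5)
Your argument is correct. The Morrey bound is essentially the paper's computation: integrate first in $w$ with a bound $C\rho^{d-q}$ that is uniform in $y$. You get the uniformity by rearrangement, while the paper splits into the cases $\abs{v-y}<2r$ and $\abs{v-y}\ge2r$.

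The nonexistence part follows a genuinely different route.

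\emph{The paper's route.} It argues in expectation. It localizes with $\tau$, so that $\chi=1$ and $\abs V\le a\le1$, and with $\sigma_m$, so that $\int_0^{\sigma_m}\mathbf 1_{\{R_r>0\}}R_r^{-1}\,dr\le m$. It then applies It\^o's formula to the bounded concave function $1-e^{-\phi_\varepsilon}$. The second-order term of this function produces an extra $-e^{-R}$, which absorbs the drift contribution $-\hat Y\cdot V$ because $\abs V\le1$. This gives $\mathbb E[1-e^{-R_{t\wedge\sigma_m}}]\le0$, hence $Y\equiv0$ on $[0,\sigma_m]$, which contradicts the zero occupation time of $\{Y=0\}$.

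\emph{Your route.} You pass to the limit pathwise and obtain a local-time-free identity for $\abs Y$. You bound the extra drift by an $o(\sqrt t)$ term and contradict $\abs{Y_t}\ge0$ using the law of the iterated logarithm for $\beta$ at $t=0$.

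\emph{What each buys.} Your route avoids the $\sigma_m$ localization, since a.s.\ finiteness of $\int_0^T\abs{Y_r}^{-1}dr$ is enough for pathwise dominated convergence. For $b_\kappa$ it also avoids the $\abs V\le1$ localization, and it tolerates any perturbation of the $Y$-drift whose time integral is $o(\sqrt t)$. The paper's route uses only bounded test functions, true martingales and expectations, with no fine path property of Brownian motion. It also shows the mechanism directly: a putative solution would be trapped on $\{v=x\}$. Both routes rest on the same two facts, namely zero occupation time of the singular set and a priori integrability of $\abs Y^{-1}$.

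Two remarks in your last paragraph are inaccurate, although they do not affect the proof.
\begin{enumerate}
\item The borderline case $\kappa=d-1$ does not force the law of the iterated logarithm. The paper's expectation argument covers it, precisely because the concave test function produces the extra negative term.
\item In your derivation, the absence of local time is not where $d\ge2$ enters. It comes from dominating $\varepsilon^2/g_\varepsilon^3$ by $\abs Y^{-1}\mathbf 1_{\{Y\ne0\}}$, which is pathwise integrable because $\kappa>0$ and $\int_0^T\abs{b_\kappa(Z_r)}\,dr<\infty$, together with zero occupation time. The hypothesis $d\ge2$ is needed only for the range $1<q<d$ in the Morrey part.
\end{enumerate}
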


\begin{remark}
The physical interpretation requires us to consider the drift defined in terms of $v-\omega x$ where $\omega$ has units of inverse time, rather than $v-x$, but we will skip this. The crux of the argument is that, in any putative weak solution,
the singular drift $b_\kappa$ forces $X$ and $V$ to coincide on
a nontrivial time interval, which is incompatible with the finite
total variation of $X$ and the infinite total variation of $V$
due to its Brownian component.
\end{remark}

\begin{proof}
\emph{Morrey norm and global integrability.}
Write $\zeta=(t,x,v)$. By the definition of $d_*$, every
$(s,y,w)\in B_r(\zeta)$ satisfies
\[
 |s-t|<r^2,\qquad
 |w-v|<r,\qquad
 |y-x-(s-t)v|<r^3.
\]
For $q<d$, the elementary estimate
\[
 \int_{|w-v|<r}|w-y|^{-q}\,dw
 \le C_{d,q}r^{d-q}
\]
holds uniformly in $y,v$ and $r>0$.
Indeed, if $|v-y|<2r$, the integration domain is contained in
$\{|w-y|<3r\}$. If $|v-y|\ge2r$, then $|w-y|\ge r$
throughout the integration domain.

We integrate first in $w$, using this estimate, and then in
$y$ and $s$. It follows that
\[
 \begin{aligned}
 \int_{B_r(\zeta)}|b_\kappa|^q
 &\le
 \kappa^q
 \int_{t-r^2}^{t+r^2}
 \int_{|y-x-(s-t)v|<r^3}
 \int_{|w-v|<r}|w-y|^{-q}\,dw\,dy\,ds\\
 &\le C_{d,q}\kappa^q r^{2+3d}r^{d-q}
 =C_{d,q}\kappa^q r^{Q-q}.
 \end{aligned}
\]
Together with \eqref{eq:ball-volume}, this proves
\eqref{eq:hardy-velocity-morrey}.
Since $|b_{\kappa,\chi}|\le|b_\kappa|$, the same Morrey bound
holds for $b_{\kappa,\chi}$.
The change of variables $(x,v)\mapsto(x,v-x)$ has Jacobian
one, and $|v-x|^{-a}$ is locally integrable for every $a<d$.
Compact support therefore proves
\eqref{eq:localized-hardy-membership}.

\emph{Localization of a hypothetical weak solution.}
Assume $\kappa\ge d-1$.
We prove nonexistence for $b_{\kappa,\chi}$; the proof for
$b_\kappa$ is identical with $\chi\equiv1$.
Suppose that a weak solution exists on $[0,T]$.
Choose $h\in(0,T]$ and $a\in(0,1]$ such that $\chi=1$ whenever
\[
 0\le t\le h,\qquad |x|\le a,\qquad |v|\le a.
\]
Define the stopping time
\[
 \tau=h\wedge\inf\{t\in[0,T]:
                    |X_t|\ge a\ \hbox{or}\ |V_t|\ge a\},
\]
with the convention $\inf\emptyset =\infty$. 
Continuity and the initial condition
 $(X_0,V_0)=(0,0)$ imply $\tau>0$
almost surely.

Set $Y_t=V_t-X_t$ and $R_t=|Y_t|$.
Since $dX_t=V_t\,dt$, up to time $\tau$ we have
\begin{equation}\label{eq:hardy-stopped-velocity}
 dY_t
 =-\kappa\frac{Y_t}{|Y_t|^2}
       \mathbf1_{\{Y_t\ne0\}}\,dt
   -V_t\,dt+\sqrt2\,dW_t,
 \qquad Y_0=0.
\end{equation}
Thus the change of variables introduces the additional
drift $-V_t$, which is bounded up to $\tau$.
The required integrability $\mathbb E\int_0^t|b_{\kappa,\chi}(r,X_r,V_r)|\,dr <\infty$ 
implies
\begin{equation*}
 A_t:=
 \int_0^{t\wedge\tau}
       \frac{\mathbf1_{\{R_r>0\}}}{R_r}\,dr<\infty
 \quad\hbox{almost surely},\qquad 0\le t\le T.
\end{equation*}
All quotients with denominator $R_r$ are assigned the value
zero when $R_r=0$.

\emph{Zero occupation time at the singularity.}
Each coordinate of $Y=V-X$ is a continuous semimartingale.
Since $X$ has finite variation, for each $j$-coordinate, 
$
 [Y^{(j)}]_t=[V^{(j)}]_t=2t.
$
The occupation-times formula (see e.g. \cite[Corollary 1, pp. 219]{Protter-05}) for the first coordinate gives
\[
 2\int_0^T\mathbf1_{\{Y_r^{(1)}=0\}}\,dr
 =
 \int_{\R}\mathbf1_{\{u=0\}}
          L_T^u(Y^{(1)})\,du=0.
\]
Because $\{Y_r=0\}\subset\{Y_r^{(1)}=0\}$, we obtain, for any finite time-horizon $T$,
\begin{equation}\label{eq:hardy-zero-occupation}
 \int_0^T\mathbf1_{\{R_r=0\}}\,dr=0
 \quad\hbox{almost surely}.
\end{equation}

\emph{Arriving at a contradiction.} We will now derive a contradiction with \eqref{eq:hardy-zero-occupation}. To  this end, for $m\ge1$, put
\[
 \sigma_m
 :=\tau\wedge\inf\{t\in[0,T]:A_t\ge m\},
\]
and let us establish (through It\^o's expansion) that $\int_0^{\sigma_m}\mathbf1_{\{R_r=0\}}\,dr>0$ almost surely.

The process $A$ is continuous, nondecreasing, and starts
from zero. Hence $\sigma_m>0$ is non-decreasing almost surely and
$A_{\sigma_m}\le m$.

The extra bounded drift $-V_{t}$ in
\eqref{eq:hardy-stopped-velocity} is handled by the increasing
concave function $r\mapsto1-e^{-r}$.
For $\varepsilon>0$, define its smooth radial approximation as
\[
 \phi_\varepsilon(y)
 :=(|y|^2+\varepsilon^2)^{1/2},
 \qquad
 \psi_\varepsilon(y)
 :=1-e^{-\phi_\varepsilon(y)}.
\]
For $y\ne0$ and $w\in\R^d$, direct differentiation yields
\begin{equation*}
 \begin{aligned}
 H_\varepsilon(y,w)
 &:=
 \left(
 \Delta_y-
 \left(\kappa\frac{y}{|y|^2}+w\right)\cdot\nabla_y
 \right)\psi_\varepsilon(y)\\
 &=
 e^{-\phi_\varepsilon(y)}
 \left(
 \frac{d-\kappa}{\phi_\varepsilon(y)}
 -\frac{|y|^2}{\phi_\varepsilon(y)^3}
 -\frac{y\cdot w}{\phi_\varepsilon(y)}
 -\frac{|y|^2}{\phi_\varepsilon(y)^2}
 \right).
 \end{aligned}
\end{equation*}
In particular, for $y\ne0$ and $|w|\le1$,
\[
 |H_\varepsilon(y,w)|
 \le\frac{|d-\kappa|+1}{|y|}+2,
\]
and
\[
 H_\varepsilon(y,w)\rightarrow 
 e^{-|y|}
 \left(
 \frac{d-1-\kappa}{|y|}
 -\frac{y}{|y|}\cdot w-1
 \right).
\]
At $y=0$, the actual drift coefficient in It\^o's formula
for $\psi_\varepsilon(Y_t)$ equals
$d e^{-\varepsilon}/\varepsilon$:
the singular drift is defined to be zero there and
$\nabla\psi_\varepsilon(0)=0$.
Its contribution to the time integral vanishes by
\eqref{eq:hardy-zero-occupation}.

Apply It\^o's formula to
$\psi_\varepsilon(Y_{t\wedge\sigma_m})$.
Since $|\nabla\psi_\varepsilon|\le1$, the stochastic integral
has expectation zero and we have
\[
 \mathbb E\psi_\varepsilon(Y_{t\wedge\sigma_m})
 =
 1-e^{-\varepsilon}
 +
 \mathbb E\int_0^{t\wedge\sigma_m}
       \mathbf1_{\{R_r>0\}}
       H_\varepsilon(Y_r,V_r)\,dr.
\]
Before time $\sigma_m$, we have $|V_r|\le a\le1$, and so
\[
 \mathbb E\int_0^{t\wedge\sigma_m}\sup_{\eps}\mathbf1_{\{R_r>0\}}
 	|H_\varepsilon(Y_r,V_r)|dr\le \mathbb E\int_0^{t\wedge\sigma_m}
 \left(
 (|d-\kappa|+1)
       \frac{\mathbf1_{\{R_r>0\}}}{R_r}+2
 \right)dr
 \le (|d-\kappa|+1)m+2T.
\]
Thus dominated convergence applies to the drift integral and since 
\[
 \sup_y
 \left|\psi_\varepsilon(y)-(1-e^{-|y|})\right|
 \le\varepsilon,
\]
letting $\varepsilon\downarrow0$ therefore gives
\begin{equation}\label{eq:hardy-radial-contradiction}
 \begin{aligned}
 \mathbb E\bigl[1-e^{-R_{t\wedge\sigma_m}}\bigr]
 &=
 \mathbb E\int_0^{t\wedge\sigma_m}
 \mathbf1_{\{R_r>0\}}e^{-R_r}
 \left(
 \frac{d-1-\kappa}{R_r}
 -\frac{Y_r}{R_r}\cdot V_r-1
 \right)dr\\
 &\le0.
 \end{aligned}
\end{equation}
Indeed, $\kappa\ge d-1$ and $|V_r|\le1$ imply that the
expression in parentheses is nonpositive.
The term $-1$, coming from the concavity of the radial test
function, absorbs the additional drift $-V_r$.

The left-hand side of
\eqref{eq:hardy-radial-contradiction} is nonnegative.
It follows that $R_{t\wedge\sigma_m}=0$ almost surely for
every $t\in[0,T]$.
Taking a countable dense set of times and using continuity,
we conclude that the stopped path is identically zero.
Therefore,
\[
 \int_0^{\sigma_m}\mathbf1_{\{R_r=0\}}\,dr
 =\sigma_m>0
 \quad\hbox{almost surely},
\]
contrary to \eqref{eq:hardy-zero-occupation}.
This proves nonexistence for
$\kappa \geq d-1$.
\end{proof}

\bigskip

\section{Main Rellich-type estimate from kinetic Morrey class via Adams estimates}
\label{sec:morrey}

Recall that \(Q=4d+2\) is the homogeneous dimension fixed in \eqref{eq:Q}.

We will need the following maximal function and estimates.  For $0\le\beta<Q$, let
\begin{equation}\label{eq:fractional-maximal}
 \mathsf M_\beta h(\zeta)
 :=\sup_{r>0}r^\beta
  \frac1{\abs{B_r(\zeta)}}\int_{B_r(\zeta)}\abs{h(\xi)}\,d\xi,
 \qquad \mathsf M:=\mathsf M_0.
\end{equation}

\begin{proposition}
	\label{lem:maximal-function}
On $(\mathbb R^{1+2d}, d_\ast, dtdxdv
)$, the following assertions hold:
	
	\begin{enumerate}[label=\textup{(\roman*)}]
		\item The operator $\mathsf M$ is of weak type $(1,1)$ and is
		bounded on $L^s$ for every $1<s\le\infty$.
		
		\item If $1<q<\infty$ and $1\le a<q$, and $f\in\mathcal M_{\mathcal K}^q$, then
		$$
		\|
		(\mathsf M(|f|^a) )^{\frac{1}{a}}
	\|_{\mathcal M_{\mathcal K}^q}
		\le
		C_{d,q,a}\|f\|_{\mathcal M_{\mathcal K}^q}.
		$$

		\item If \(f\in L_{loc}^1\) and
		\(\mathsf Mf<\infty\) almost everywhere, then for \(0<\delta<1\), 
		\((\mathsf Mf)^\delta\in A_1\), that is
		$$
		\mathsf M((\mathsf Mf)^\delta) \le C_{d,\delta} (\mathsf Mf)^\delta \quad \text{a.e.}
		$$
	
	\end{enumerate}
\end{proposition}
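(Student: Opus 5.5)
The plan is to treat all three assertions as instances of the general theory on spaces of homogeneous type. The setting $(\R^{1+2d},d_*,d\zeta)$ qualifies by \eqref{eq:ball-volume}; we only check that the constants depend on $d$ and the exponents.

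For (i) we use a Vitali covering lemma for $d_*$-balls. Given a family of balls $B_{r_i}(\zeta_i)$ with bounded radii covering $\{\mathsf M h>\alpha\}\cap B_R(0)$, we extract a disjoint subfamily whose $5$-fold dilates cover; this needs only the triangle inequality for $d_*$. Since $|B_{5r}|\le C_d|B_r|$ by \eqref{eq:ball-volume}, we obtain $|\{\mathsf Mh>\alpha\}|\le C_d\alpha^{-1}\|h\|_1$. Trivially $\|\mathsf Mh\|_\infty\le\|h\|_\infty$, and Marcinkiewicz interpolation then gives $L^s$ boundedness for $1<s\le\infty$.

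For (ii) we fix a ball $B=B_r(\zeta)$ and split $|f|^a=|f|^a\mathbf 1_{2B}+|f|^a\mathbf 1_{(2B)^c}=:g_1+g_2$, where $2B=B_{2r}(\zeta)$. For the local part we apply (i) in $L^{q/a}$, using $q/a>1$:
\[
\int_B(\mathsf Mg_1)^{q/a}\le C\int_{2B}|f|^q\le C\|f\|_{\M^q_\K}^q\, r^{Q-q}.
\]
This is exactly the required bound $r^{Q-q}$. For the far part, take $\xi\in B$ and a ball $B_\rho(\xi)$ that meets $(2B)^c$. Then $\rho\gtrsim r$, and $B_\rho(\xi)\subset B_{C\rho}(\zeta)$. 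Hölder's inequality and the Morrey condition give
\[
\frac1{|B_\rho(\xi)|}\int_{B_\rho(\xi)}|f|^a\le C\Bigl(\frac1{|B_{C\rho}(\zeta)|}\int_{B_{C\rho}(\zeta)}|f|^q\Bigr)^{a/q}\le C\rho^{-a}\|f\|^a_{\M^q_\K}\le Cr^{-a}\|f\|^a_{\M^q_\K}.
\]
Hence $\mathsf M g_2\le Cr^{-a}\|f\|^a_{\M^q_\K}$ on $B$. Combining the two parts with $(\mathsf M(g_1+g_2))^{1/a}\le C\bigl((\mathsf Mg_1)^{1/a}+(\mathsf Mg_2)^{1/a}\bigr)$, we get
\[
r\Bigl(|B|^{-1}\int_B(\mathsf M|f|^a)^{q/a}\Bigr)^{1/q}\le C\|f\|_{\M^q_\K}.
\]

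For (iii) we follow the Coifman–Rochberg argument. Fix a ball $B=B_r(\zeta_0)$ and split $f=f\mathbf 1_{2B}+f\mathbf 1_{(2B)^c}$. For the local piece, Kolmogorov's inequality combined with the weak $(1,1)$ bound from (i) gives
\[
\frac1{|B|}\int_B(\mathsf M(f\mathbf 1_{2B}))^\delta\le C_\delta\Bigl(\frac1{|B|}\int_{2B}|f|\Bigr)^\delta\le C(\mathsf Mf(\xi))^\delta
\]
for every $\xi\in B$, because $2B\subset B_{3r}(\xi)$. For the far piece, the geometric observation from (ii) shows that $\mathsf M(f\mathbf 1_{(2B)^c})$ is essentially constant on $B$, up to doubling constants. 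Concretely, its value at any point of $B$ is bounded by $C\,\mathsf Mf(\xi)$ for every $\xi\in B$, since each relevant ball of radius $\rho\gtrsim r$ is contained in a ball around $\xi$ of comparable radius. Averaging over $B$ gives $|B|^{-1}\int_B(\mathsf Mf)^\delta\le C(\mathsf Mf(\xi))^\delta$ for all $\xi\in B$. Taking the supremum over balls $B\ni\xi$ yields the $A_1$ inequality, with centered and uncentered maximal functions comparable by doubling.

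The main obstacle I expect is the geometry step, namely controlling the far part uniformly. The quasi-symmetry \eqref{eq:reverse-comparable} of $d$ is why we must work with the symmetrized metric $d_*$. We also need to check carefully that the containments $B_\rho(\xi)\subset B_{C\rho}(\zeta)$ hold with constants depending only on $d$, and that the Vitali constant is uniform.
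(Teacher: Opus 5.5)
Your proposal is correct. For (ii) it coincides with the paper's proof. You use the same splitting $|f|^a=|f|^a\mathbf 1_{2B}+|f|^a\mathbf 1_{(2B)^c}$ and the $L^{q/a}$-boundedness of $\mathsf M$ on the local piece. For the far piece you use the same observation that balls centred in $B$ meeting $(2B)^c$ have radius $\gtrsim r$, followed by H\"older and the Morrey bound.

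For (i) and (iii) the paper gives no argument and simply cites the general maximal-function theory on spaces of homogeneous type: Coifman--Weiss for the weak $(1,1)$ and $L^s$ bounds, and a Coifman--Rochberg lemma for $(\mathsf Mf)^\delta\in A_1$. Your Vitali-covering plus Marcinkiewicz argument, and your splitting with Kolmogorov's inequality, are the standard proofs behind those citations. They go through here because they use only the triangle inequality for $d_*$ and the volume bounds \eqref{eq:ball-volume}. Your version buys self-containment and visible dependence of the constants on $d$ and $\delta$; the paper's buys brevity.

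Two small simplifications:
\begin{itemize}
\item In the far part of (ii) you do not need the containment $B_\rho(\xi)\subset B_{C\rho}(\zeta)$. The Morrey norm is a supremum over all centres, so H\"older on $B_\rho(\xi)$ itself gives $\rho^{-a}\|f\|^a_{\M^q_\K}$ directly. The geometric point you flag as the main obstacle only matters in (iii). There, $B_\rho(\eta)\subset B_{3\rho}(\xi)$ whenever $\eta,\xi\in B$ and $\rho>r$, since $d_*(\xi,\theta)<2r+\rho<3\rho$.
\item In (iii), $\mathsf M$ is the centred maximal function and $\xi\in B_r(\xi)$. Your average bound therefore already gives $\mathsf M((\mathsf Mf)^\delta)(\xi)\le C(\mathsf Mf(\xi))^\delta$ at every point, with no need to compare centred and uncentred maximal functions.
\end{itemize}
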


\begin{proof}
 \textup{(i)} and \textup{(iii)} are standard results from maximal-function theory on spaces of homogeneous type; see \cite[pp.71]{CW} and \cite[Lemma~3.4]{LLW}, respectively. 

\textup{(ii)} See \cite[Corollary~4.2]{AM} for a related Morrey-space estimate. We give a direct proof for $\mathcal M_{\mathcal K}^q$.
	Fix
	$B=B_r(\zeta_0)$  and decompose
	$$
	|f|^a
	=
	|f|^a\mathbf 1_{2B}
	+
	|f|^a\mathbf 1_{(2B)^c}
	=:F_1+F_2.
	$$
	Since $a<q$, the $L^{\frac{q}{a}}$-boundedness of $\mathsf M$   gives
	$$
	\frac{1}{|B|}\int_B(\mathsf MF_1)^{\frac{q}{a}}
	\le
	C_1 \frac{1}{|B|} \int_{2B}|f|^q
	\le
	C_2 \| f\|_{\M_\K^q}^q r^{-q}.
	$$
	If $\zeta\in B$, every ball centered at $\zeta$ that meets
	$(2 B)^c$ has radius at least $cr$. Then, by H\"older's
	inequality and the definition of $\mathcal M_{\mathcal K}^q$,
	$$
	\mathsf MF_2(\zeta)
	\le
	\sup_{\rho\ge cr}
	\Big(
	\frac{1}{|B_\rho(\zeta)|} \int_{B_\rho(\zeta)}|f|^q
	\Big)^{\frac{a}{q}}
	\le
	C_3 \| f\|_{\M_\K^q}^a r^{-a},
	$$
	whence
	$$
		\frac{1}{|B|}
	\int_B(\mathsf MF_2)^{\frac{q}{a}}
	\le
	C_4 \| f\|_{\M_\K^q}^q r^{-q}.
	$$
	Combining the two estimates yields
	$$
	r 
	\Big(
		\frac{1}{|B|}
	\int_B
	(\mathsf M(|f|^a))^{\frac{q}{a}}
	\Big)^{\frac{1}{q}}
	\le C_5 \| f\|_{\M_\K^q}.
	$$
	Taking the supremum over $B$ proves \textup{(ii)}.
\end{proof}

\begin{proposition}[Hedberg-type inequality]\label{lem:hedberg}
	If $0<\alpha<\beta<Q$, then
	\begin{equation}\label{eq:hedberg}
	I_\alpha f
	\le
	C_{\alpha,\beta,d}
	\bigl(\mathsf M_\beta f\bigr)^{\frac{\alpha}{\beta}}
	\bigl(\mathsf M f\bigr)^{1-\frac{\alpha}{\beta}}.
\end{equation}
\end{proposition}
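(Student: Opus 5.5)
We prove \eqref{eq:hedberg} by the classical Hedberg splitting of the Riesz potential at a free radius $\delta>0$, adapted to the homogeneous space $(\R^{1+2d},d_*,d\zeta)$. The only geometric input is the volume bound \eqref{eq:ball-volume}, $\abs{B_r(\zeta)}\simeq r^Q$.

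Fix $\zeta$ and assume $0<\mathsf Mf(\zeta)<\infty$ and $\mathsf M_\beta f(\zeta)<\infty$. Otherwise the inequality is trivial: if $\mathsf Mf(\zeta)=0$ then $f=0$ a.e., and if either maximal function is infinite the right-hand side is $+\infty$. Write
\[
I_\alpha f(\zeta)=\int_{d_*(\zeta,\xi)<\delta}\frac{\abs{f(\xi)}}{d_*(\zeta,\xi)^{Q-\alpha}}\,d\xi+\int_{d_*(\zeta,\xi)\ge\delta}\frac{\abs{f(\xi)}}{d_*(\zeta,\xi)^{Q-\alpha}}\,d\xi=:J_1+J_2 .
\]

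\emph{Near part.} Decompose $\{d_*(\zeta,\xi)<\delta\}$ into dyadic annuli $2^{-k-1}\delta\le d_*(\zeta,\xi)<2^{-k}\delta$, $k\ge0$. On the $k$-th annulus the kernel is at most $(2^{-k-1}\delta)^{\alpha-Q}$, and the integral of $\abs f$ over $B_{2^{-k}\delta}(\zeta)$ is at most $C(2^{-k}\delta)^Q\mathsf Mf(\zeta)$. Summing the resulting geometric series, which converges because $\alpha>0$, gives
\[
J_1\le C\delta^{\alpha}\mathsf Mf(\zeta).
\]

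\emph{Far part.} Use annuli $2^k\delta\le d_*(\zeta,\xi)<2^{k+1}\delta$, $k\ge0$. On the $k$-th annulus the kernel is at most $(2^k\delta)^{\alpha-Q}$, and
\[
\int_{B_{2^{k+1}\delta}(\zeta)}\abs f\le C(2^{k+1}\delta)^{Q-\beta}\,\mathsf M_\beta f(\zeta),
\]
directly from the definition \eqref{eq:fractional-maximal}. Summing gives
\[
J_2\le C\sum_{k\ge0}(2^k\delta)^{\alpha-\beta}\,\mathsf M_\beta f(\zeta)=C\delta^{\alpha-\beta}\mathsf M_\beta f(\zeta),
\]
where the series converges precisely because $\alpha<\beta$.

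\emph{Optimization.} Choose $\delta$ to balance the two bounds, $\delta^\beta=\mathsf M_\beta f(\zeta)/\mathsf Mf(\zeta)$. Then both terms equal $(\mathsf M_\beta f)^{\alpha/\beta}(\mathsf Mf)^{1-\alpha/\beta}$, which yields \eqref{eq:hedberg} with $C$ depending only on $\alpha,\beta,d$.

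There is no real obstacle here. The only points needing care are the degenerate cases treated in the first paragraph, and checking that the dyadic annuli argument uses nothing beyond the doubling bound \eqref{eq:ball-volume}. It works for the quasi-symmetric setting because $d_*$ is a genuine metric.
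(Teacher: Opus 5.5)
Your proposal is correct and follows essentially the same argument as the paper. Both split $I_\alpha f$ at a free radius and bound the near part by dyadic annuli and $\mathsf M f$. Both bound the far part by dyadic annuli and $\mathsf M_\beta f$, a series that converges because $\alpha<\beta$, and then balance the two terms with $R^\beta=\mathsf M_\beta f(\zeta)/\mathsf M f(\zeta)$, after the same handling of the degenerate cases.
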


\begin{proof}
Fix \(\zeta\in \mathbb R^{1+2d}\) and \(R>0\). 	If either maximal function is infinite, the estimate is immediate.
If either one vanishes, then \(f=0\) almost everywhere. We may therefore assume that
\[
0<\mathsf M f(\zeta),\,\mathsf M_\beta f(\zeta)<\infty.
\]
Let us split the integral defining \(I_\alpha f(\zeta)\) into
the regions \(d_*(\zeta,\xi)<R\) and \(d_*(\zeta,\xi)\ge R\).
For the near region, using a dyadic decomposition,
\begin{align*}
	\int_{d_*(\zeta,\xi)<R}
	\frac{|f(\xi)|}{d_*(\zeta,\xi)^{Q-\alpha}}\,d\xi 
	&=	\sum_{j=0}^{\infty}
	\int_{2^{-j-1}R \le d_*(\zeta,\xi)<2^{-j}R}
	\frac{|f(\xi)|}{d_*(\zeta,\xi)^{Q-\alpha}}\,d\xi \\
	&\le
	\sum_{j=0}^{\infty}
	(2^{-j-1}R)^{\alpha-Q}
	\int_{B_{2^{-j}R}(\zeta)}|f(\xi)|\,d\xi\\
	&\le C_1 R^\alpha \mathsf M f(\zeta).
\end{align*}
For the far region,	similarly,
\begin{align*}
		\int_{d_*(\zeta,\xi)\ge R}
	\frac{|f(\xi)|}{d_*(\zeta,\xi)^{Q-\alpha}}\,d\xi
	&=	\sum_{j=0}^{\infty}
	\int_{2^{j}R \le d_*(\zeta,\xi)<2^{j+1}R}
	\frac{|f(\xi)|}{d_*(\zeta,\xi)^{Q-\alpha}}\,d\xi \\
	&\le
	\sum_{j=0}^{\infty}
	(2^jR)^{\alpha-Q}
	\int_{B_{2^{j+1}R}(\zeta)}|f(\xi)|\,d\xi\\
	&\le
	C_2 	\sum_{j=0}^{\infty}
	(2^j R)^{\alpha-\beta} (2^{j+1}R)^{\beta} \frac{1}{|B_{2^{j+1}R} (\zeta)|} 
	\int_{B_{2^{j+1}R} (\zeta)} |f(\xi)|\,d\xi\\
	&\le C_2 R^{\alpha-\beta}\mathsf M_\beta f(\zeta),
\end{align*}
where the last series converges because \(\alpha<\beta\). Hence
\[
I_\alpha f(\zeta)
\le
C_1R^\alpha \mathsf M f(\zeta)
+
C_2R^{\alpha-\beta}\mathsf M_\beta f(\zeta).
\]
Taking
$
R=
\left(
\frac{\mathsf M_\beta f(\zeta)}{\mathsf Mf(\zeta)}
\right)^{\frac{1}{\beta}}
$
gives \eqref{eq:hedberg}.
\end{proof}

\begin{proposition}[Kinetic Adams inequality]\label{prop:adams}
	Let \(1<p<\infty\), $q>1$, and $g\in\M^q_{\K}$. Then
\begin{equation*}
	\int_{\mathbb R^{1+2d}}
	|g|\, |I_{\frac{1}{p}}f|^p
	\le
	C_{p,q,d}\norm g_{\M^q_{\K}}\|f\|_{p}^p
\end{equation*}
or, equivalently, 
	\begin{equation}\label{eq:adams-operator}
		\norm{\abs g^{\frac{1}{p}}I_{\frac{1}{p}}}_{p\rightarrow p}
		\le C_{d,p,q}\norm g_{\M^q_{\K}}^{\frac{1}{p}}.
	\end{equation}

\end{proposition}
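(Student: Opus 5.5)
The plan is the classical Adams/Chiarenza–Frasca argument, run through the maximal-function tools of Proposition \ref{lem:maximal-function} and the Hedberg inequality of Proposition \ref{lem:hedberg}. By duality it is enough to bound $\int |g|\,|I_{1/p}f|^p$. We may take $f\ge0$ and write $\alpha=1/p$.

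\textbf{Step 1: pass from $|g|$ to an $A_1$ majorant.} Fix $1<a<q$ and set $w:=(\mathsf M(|g|^a))^{1/a}$. Then $|g|\le w$ a.e. by Lebesgue differentiation. Proposition \ref{lem:maximal-function}(ii) gives $\norm{w}_{\M^q_\K}\le C\norm g_{\M^q_\K}$. Proposition \ref{lem:maximal-function}(iii), applied with $\delta=1/a$, shows that $w\in A_1$ with a constant depending only on $d,a$. It therefore suffices to prove
\[
\int w\,(I_\alpha f)^p\le C\norm{w}_{\M^q_\K}\norm f_p^p .
\]

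\textbf{Step 2: a pointwise Hedberg-type bound in Morrey form.} Apply Proposition \ref{lem:hedberg} with some $\beta\in(\alpha,Q)$ to estimate $I_\alpha f$ by $\mathsf M_\beta f$ and $\mathsf M f$. A cleaner route, which I expect to adopt, is to redo the splitting at radius $R$ with weights. The near part is bounded by $R^\alpha\mathsf Mf(\zeta)$. The far part is handled by H\"older against the weight: for every ball $B_\rho$,
\[
\frac1{|B_\rho|}\int_{B_\rho} f\le\Big(\frac1{|B_\rho|}\int_{B_\rho} f^pw\Big)^{1/p}\Big(\frac1{|B_\rho|}\int_{B_\rho}w^{-p'/p}\Big)^{1/p'}.
\]
Because $w\in A_1$, $w^{-1}\le C(\frac1{|B_\rho|}\int_{B_\rho}w)^{-1}$ on $B_\rho$, and the Morrey condition gives $\frac1{|B_\rho|}\int_{B_\rho}w\lesssim \rho^{-1}\|w\|_{\M^1}$. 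This is the step I expect to be the main obstacle. The exponents must balance exactly at the critical scaling $\alpha=1/p$, weight of degree $-1$, and this exactness is what lets $R$ be chosen and the $R$-dependence cancel.

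\textbf{Step 3: the alternative standard route, which I will follow if the above becomes awkward.} Write $(I_\alpha f)^p w=(I_\alpha f)^{p-1}\cdot(I_\alpha f)\,w$. Use Fubini to move one $I_\alpha$ onto $w(I_\alpha f)^{p-1}$. Then use the Morrey property in the form
\[
I_\alpha(w\,\cdot)\text{ is controlled via }\ \int_{B_\rho}w\le \|w\|_{\M^q}\rho^{Q-1},
\]
which, by the dyadic sum from the proof of Proposition \ref{lem:hedberg}, yields
\[
I_\alpha(w h)(\xi)\lesssim\|w\|_{\M^q}\,\big(\mathsf M(h^{s})\big)^{1/s}(\xi)
\]
for some $s>1$ chosen using $q>1$. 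Here $q>1$ supplies the room to use H\"older with an exponent strictly above one, since $\alpha=1/p$ combined with degree $-1$ sits exactly at the endpoint. The pieces are then closed as follows. H\"older separates $f$ from $w(I_\alpha f)^{p-1}$. Next, the $L^{p'/s}$-boundedness of $\mathsf M$ (Proposition \ref{lem:maximal-function}(i)), with $s<p'$, is applied. Finally, the weighted form of the $A_1$ property, namely $\int (\mathsf M h)^{\cdot}w\lesssim\int h^{\cdot}w$ (Fefferman–Stein, available on homogeneous spaces), is used. Together these give
\[
\int w(I_\alpha f)^p\lesssim\|w\|_{\M^q}^{1/p}\|f\|_p\Big(\int w(I_\alpha f)^p\Big)^{1/p'}.
\]

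\textbf{Step 4: close the estimate.} Divide by the last factor, which is finite after first truncating $f$ to bounded compact support and $w$ to $\min(w,N)$. Then let $N\to\infty$ by monotone convergence. This yields the inequality, and \eqref{eq:adams-operator} follows by taking $p$-th roots.
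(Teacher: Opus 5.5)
The overall structure is right, but the central pointwise estimate is missing: both of your routes for it fail, so the proof does not close. Your Steps 1 and 4 are fine. Step 1 is essentially the paper's Coifman--Rochberg majorant, and Step 4 is its absorption argument. The skeleton of Step 3 (duality, moving $I_{1/p}$ onto $w(I_{1/p}f)^{p-1}$, a weighted bound, then absorption) is also exactly the paper's.

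\emph{Step 2 runs the wrong way.} The $A_1$ condition gives $w^{-1}\le C\abs{B_\rho}/w(B_\rho)$ on $B_\rho$. So bounding $\bigl(\frac{1}{\abs{B_\rho}}\int_{B_\rho}w^{-p'/p}\bigr)^{1/p'}$ from above needs a \emph{lower} bound on $w(B_\rho)/\abs{B_\rho}$. The Morrey condition only gives the upper bound $\lesssim\rho^{-1}\norm w_{\M^q_{\K}}$. This route also leaves $\int f^pw$, not $\norm f_p^p$, on the right.

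\emph{The Step 3 bound $I_{1/p}(wh)\lesssim\norm w_{\M^q_{\K}}(\mathsf M(h^s))^{1/s}$ is false.} In the dyadic sum, scale $\rho$ contributes $\rho^{1/p-Q}\int_{B_\rho}wh\lesssim\norm w_{\M^q_{\K}}\rho^{1/p-1}(\mathsf M(h^s))^{1/s}$. Since $1/p<1$, this diverges as $\rho\to0$. Concretely, take $w=d_*(\cdot,\zeta_0)^{-1}$, which is $A_1$ and in $\M^q_{\K}$, and $h=\mathbf 1_{B_1(\zeta_0)}$. Then the left side blows up like $d_*(\xi,\zeta_0)^{1/p-1}$ while the right side stays bounded.

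The Morrey norm only controls large scales. Small scales must be controlled by the $A_1$ property at the point. Optimizing the splitting radius as in the Hedberg inequality gives the correct form
\[
I_{1/p}(wh)(\xi)\lesssim\norm w_{\M^q_{\K}}^{1/p}\,w(\xi)^{1/p'}\,\mathcal Mh(\xi)
\]
for a suitable maximal operator $\mathcal M$. The factor $w^{1/p'}$ is what turns $\norm{I_{1/p}(w(I_{1/p}f)^{p-1})}_{p'}^{p'}$ back into $\int w(I_{1/p}f)^p$. Without it, your chain (unweighted $L^{p'/s}$ bound for $\mathsf M$) can only produce $\norm{I_{1/p}f}_p^{p-1}$, which is not controlled.

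\emph{The unweighted choice of $\mathcal M$ does not cover all $p$.} Even with the factor $w^{1/p'}$, taking $\mathcal Mh=(\mathsf M(h^s))^{1/s}$ via unweighted H\"older fails in general. Small scales need $w^{s'}\in A_1$. Large scales need $w\in\M^{s'}_{\K}$, which in general means $s'\le q$. The $L^{p'/s}(w)$ step needs $s<p'$, i.e.\ $s'>p$. Together these force $p<q$. The statement is for all $p$, and it is later used with $p>Q-1$ and $q$ close to $1$.

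\emph{Two ways to repair it.}

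One repair is to let $\mathcal M$ be the centered maximal operator $\mathsf M^w$ with respect to the measure $w\,d\zeta$. This measure is doubling because $w\in A_1$. Then $\int_{B_\rho(\xi)}wh\le w(B_\rho(\xi))\,\mathsf M^wh(\xi)$, with $w(B_\rho(\xi))\lesssim\min\{\rho^Qw(\xi),\rho^{Q-1}\norm w_{\M^q_{\K}}\}$. This yields the display above. Since $\mathsf M^w$ is bounded on $L^{p'}(w\,d\zeta)$, your Step 3 then closes for every $1<p<\infty$ using only $w\in A_1$.

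The paper avoids weighted maximal operators altogether. It splits $\abs g=\abs g^{1/p+\sigma}\abs g^{1/p'-\sigma}$ and applies H\"older inside the positive kernel. This reduces everything to two bounds, both from Hedberg plus $\mathsf M(\abs g^\kappa)\lesssim\abs g^\kappa$:
\[
I_{1/p}(\abs g^{1+\sigma p})\lesssim\norm g^{1/p}_{\M^{q_0}_{\K}}\abs g^{1+\sigma p-1/p},\qquad I_{1/p}(\abs g^{1/p+\sigma p'})\lesssim\norm g^{1/p}_{\M^{q_0}_{\K}}\abs g^{\sigma p'}.
\]
Since Hedberg needs $\beta>1/p$, one must take $\sigma>0$. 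That in turn requires $A_1$ for a power $\abs g^{q_0}$ with $q_0>1$. On that route your majorant would have to be upgraded to $w^{q_0}\in A_1$, which Proposition~\ref{lem:maximal-function}(iii) gives with $\delta=q_0/a<1$.
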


\begin{remark}
As mentioned in the introduction, the maximal-function arguments that we use to extend the proof of the Adams estimate \cite[Theorem 7.3]{Adams2} to the kinetic setting  build on 
\cite{AM, CoifmanRochberg, DH, Heinonen} who consider a more general setup of 
spaces of homogeneous type.

It should also be noted that, for \(p=2\), the preceding estimate already appeared in \cite[Appendix~C]{L}, where the author used a fixed-point argument to establish the existence and uniqueness of solutions to certain nonlinear equations. The proof of the estimate there, however, follows a different approach from that of Adams and ours.
 However, we need this estimate for the full range $1<p<\infty$. Indeed, we need to be able to (a) take $p$ large when handling the backward equation, in order to apply the Sobolev embedding theorem Proposition~\ref{prop:sobolev} to establish Krylov estimates and to construct the Feller propagator; (b) take $p$ close to $1$ when handling the forward equation, in order to have delta-function $\delta_x$ in the initial condition and thus prove uniqueness of the probability density evolving from a fixed point.
 
 We refer to \cite{AX1, Krylov} regarding the theory of Morrey spaces in the non-degenerate setting, and, in particular, in what concerns parabolic Morrey embeddings, to \cite{AX2, Krylov2}.
\end{remark}

\begin{proof}
Since the integral kernel of \(I_{1/p}\) is positive,
it suffices to consider \(f\ge0\).

We first recall the \(A_1\) condition that will be used below.  For a
nonnegative locally integrable function \(w\), write
\[
 [w]_{A_1}
 :=
 \inf\bigl\{C>0:\mathsf Mw\le Cw\ \text{a.e.}\bigr\},
\]
where \(\mathsf M\) is the intrinsic Hardy--Littlewood maximal operator
defined in \eqref{eq:fractional-maximal}. Thus \(w\in A_1\) precisely when
\([w]_{A_1}<\infty\).

We separate the argument into two steps.  We first prove the estimate under
the auxiliary assumption that a suitable power of \(\abs g\) belongs to
\(A_1\). The constant in this intermediate estimate is allowed to depend
on its \(A_1\)-characteristic.  We then remove this assumption by replacing
\(\abs g\) with a Coifman--Rochberg majorant whose \(A_1\)-characteristic is
bounded solely in terms of the fixed exponents and the doubling constant.

\smallskip
\noindent\emph{Step 1: Estimate under the $A_1$ assumption.}
Choose $q_0$ and $\sigma>0$ such that
$$
1<q_0<\min\{q,Q\},
\qquad
1+\sigma p<q_0,
\qquad
\frac{1}{p}+\sigma p'<q_0,
\qquad \sigma p'<1.
$$
Such a choice is possible for every $p>1$ and $q>1$.  Since $q_0<q$,
H\"older's inequality gives
$\|g\|_{\M^{q_0}_{\K}}\le\|g\|_{\M^q_{\K}}$.
The case $g=0$ is immediate, so assume $g\ne0$.  For the moment, suppose
additionally that $|g|^{q_0}\in A_1$, and put
$A_0=[|g|^{q_0}]_{A_1}$.  All constants in this step may depend on
$d,p,q_0,\sigma,A_0$.  We claim that 
\begin{align}\label{weighted_eqn}
	\int_{\mathbb R^{1+2d}}
	|g|\,|I_{\frac{1}{p}}f|^p
	\le C_{d,p,q_0,\sigma,A_0}
	\|g\|_{\M^{q_0}_\K}\|f\|_{p}^p.
\end{align}
Indeed,
for $p\in(1,\infty)$, the symmetry and positivity of the kernel  give
\begin{align}\label{duality}
	\int_{\mathbb R^{1+2d}} |g|\,|I_{\frac{1}{p}}f|^p
	=
	\langle
I_{\frac{1}{p}}
	(|g|\,(I_{\frac{1}{p}}f)^{p-1}),f
	\rangle 
	&\le
	\|
	I_{\frac{1}{p}}
	(|g|\,(I_{\frac{1}{p}}f)^{p-1})
	\|_{p'}
	\|f\|_{p}.
\end{align}
Note that H\"older's inequality yields 
\begin{align}\label{T_est1}
	I_{\frac{1}{p}}(|g|\,(I_{\frac{1}{p}}f)^{p-1})(\zeta)
	&=
	I_{\frac{1}{p}}
	(
	|g|^{\frac{1}{p}+\sigma}
	|g|^{\frac{1}{p'}-\sigma}
	|I_{\frac{1}{p}}f|^{p-1}
	)\notag\\
	&\le
	\big(
	I_{\frac{1}{p}}(|g|^{1+\sigma p})
	\big)^{\frac{1}{p}}
	\big(
	I_{\frac{1}{p}}
	(|g|^{1-\sigma p'}(I_{\frac{1}{p}}f)^p)
	\big)^{\frac{1}{p'}}.
\end{align}
Applying the Hedberg-type inequality with
$\alpha=\frac{1}{p}$ and $\beta=1+\sigma p$, we obtain
\begin{align*}
	I_{\frac{1}{p}}(|g|^{1+\sigma p})
	&\le C_1
	\big(
	\mathsf M_{1+\sigma p}(|g|^{1+\sigma p})
	\big)^{\frac{1}{p(1+\sigma p)}}
	\big(
	\mathsf M(|g|^{1+\sigma p})
	\big)^{1-\frac{1}{p(1+\sigma p)}}.
\end{align*}
Since $1+\sigma p<q_0$, Hölder's inequality on each
intrinsic ball gives 
\begin{align*}
	\mathsf M_{1+\sigma p}(|g|^{1+\sigma p})(\zeta)
	&=
	\sup_{r>0}
	r^{1+\sigma p}
	\frac{1}{|B_r(\zeta)|}
	\int_{B_r(\zeta)}|g|^{1+\sigma p}\\
	&\le
	\sup_{r>0}
	\Big(
	r
	\Big(
	\frac{1}{|B_r(\zeta)|}
	\int_{B_r(\zeta)}|g|^{q_0}
	\Big)^{\frac{1}{q_0}}
	\Big)^{1+\sigma p}\\
	&\le
	\|g\|_{\mathrm \M^{q_0}_\K}^{1+\sigma p}.
\end{align*}
For every $0<\kappa<q_0$, $|g|^{q_0}\in A_1$ gives
\begin{align*}
	\mathsf M(|g|^\kappa)
	\le
	(\mathsf M(|g|^{q_0}))^{\frac{\kappa}{q_0}}
	\le
	[|g|^{q_0}]_{A_1}^{\frac{\kappa}{q_0}}
	|g|^\kappa
	 \qquad\text{a.e.}
\end{align*}
Hence,
\begin{align}	\label{T_est2}
	I_{\frac{1}{p}}(|g|^{1+\sigma p})
	\le C_2
	\|g\|_{ \M^{q_0}_\K}^{\frac{1}{p}}
	|g|^{1+\sigma p-\frac{1}{p}}.
\end{align}
	Similarly, applying the Hedberg-type inequality again we have
\begin{align}\label{T_est3}
	I_{\frac{1}{p}}
	(|g|^{\frac1p+\sigma p'})
	\le
	C_3
	\big(
	\mathsf M_{\frac1p+\sigma p'}
	(|g|^{\frac1p+\sigma p'})
	\big)^{\frac{1}{1+\sigma pp'}}
	\big(
	\mathsf M
	(|g|^{\frac1p+\sigma p'})
	\big)^{1-\frac{1}{1+\sigma pp'}}
	\le
	C_4\|g\|_{\M^{q_0}_\K}^{\frac{1}{p}}
	|g|^{\sigma p'}.
\end{align}

Combining \eqref{T_est1}-\eqref{T_est3}, we obtain
\begin{align*}
	\|
	I_{\frac{1}{p}}(|g| |I_{\frac{1}{p}} f|^{p-1})
	\|_{p'}
	&\le
	\Big(
	\int_{\mathbb R^{1+2d}}
	\big(
	I_{\frac{1}{p}}(|g|^{1+\sigma p})
	\big)^{\frac{p'}{p}}
	I_{\frac{1}{p}}
	\bigl(|g|^{1-\sigma p'}|I_{\frac{1}{p}} f|^p\bigr)
	\Big)^{\frac{1}{p'}}\\
	&\le C_5
	\|g\|_{\M_\K^{q_0}}^{\frac{1}{p^2}}
	\Big(
	\int_{\mathbb R^{1+2d}}
	|g|^{\frac1p+\sigma p'}
	I_{\frac{1}{p}}
	\bigl(|g|^{1-\sigma p'}|I_{\frac{1}{p}} f|^p\bigr)
	\Big)^{\frac{1}{p'}}\\
	&= C_5
	\|g\|_{\M_\K^{q_0}}^{\frac{1}{p^2}}
	\Big(
	\int_{\mathbb R^{1+2d}}
	I_{\frac{1}{p}}
	(|g|^{\frac1p+\sigma p'})
	|g|^{1-\sigma p'}|I_{\frac{1}{p}} f|^p
	\Big)^{\frac{1}{p'}}\\
	&\le C_6
	\|g\|_{\M_\K^{q_0}}^{\frac{1}{p}}
	\Big(
	\int_{\mathbb R^{1+2d}}
	|g||I_{\frac{1}{p}} f|^p
	\Big)^{\frac{1}{p'}}
	.
\end{align*}
Recalling \eqref{duality}, we conclude that
$$
\int_{\mathbb R^{1+2d}}
|g|\,|I_{\frac{1}{p}}f|^p
\le C_6
\|g\|_{\M_\K^{q_0}}^{\frac{1}{p}}
\left(
\int_{\mathbb R^{1+2d}}
|g|\,|I_{\frac{1}{p}}f|^p
\right)^{\frac{1}{p'}}
\|f\|_{p}.
$$
If the weighted integral on the left-hand side is finite, then, with
the zero case being trivial, we  obtain \eqref{weighted_eqn}.

It remains to remove this finiteness condition. To this end, we apply the
preceding argument first to truncated potentials.
Let $f\in C_c^\infty(\mathbb R^{1+2d})$, and define
$$
I_{\frac{1}{p}}^{\varepsilon,R}f(\zeta)
:=
\int_{\{\varepsilon<d_*(\zeta,\xi)<R\}}
\frac{f(\xi)}
{d_*(\zeta,\xi)^{Q-\frac{1}{p}}}\,d\xi,
\qquad
0<\varepsilon<R<\infty.
$$
The function $I_{\frac{1}{p}}^{\varepsilon,R}f$ is bounded and has
bounded support. Since $g\in L_{\mathrm{loc}}^1$, it follows that
$$
\int_{\mathbb R^{1+2d}}
|g|\,
|
I_{\frac{1}{p}}^{\varepsilon,R}f
|^p
<\infty.
$$
Replace every occurrence of the integral operator in
\eqref{duality} and \eqref{T_est1} by the same truncated operator
$I_{1/p}^{\varepsilon,R}$, and put
$U=I_{1/p}^{\varepsilon,R}f$.  Its kernel is still positive and symmetric.
The two estimates \eqref{T_est2}--\eqref{T_est3} remain valid for the
truncated operator, since its kernel is bounded by the full kernel.
Repeating the duality calculation therefore gives, with
$S_{\varepsilon,R}=\int |g|U^p<\infty$,
\[
 S_{\varepsilon,R}
 \le C\|g\|_{\M^{q_0}_{\K}}^{1/p}
       S_{\varepsilon,R}^{1/p'}\|f\|_p.
\]
Division by $S_{\varepsilon,R}^{1/p'}$ when this quantity is nonzero
gives, after renaming the constant, uniformly in $\varepsilon$ and $R$,
$$
\int_{\mathbb R^{1+2d}}
|g|\,
|
I_{\frac{1}{p}}^{\varepsilon,R}f
|^p
\le C_6
\|g\|_{\M_\K^{q_0}}\|f\|_{p}^p.
$$
Since $f\ge0$, the monotone convergence theorem gives
$$I_{\frac{1}{p}}^{\frac{1}{n},n}f\uparrow I_{\frac{1}{p}}f.$$
Another application of the monotone convergence theorem therefore implies that
\eqref{weighted_eqn} holds for every
$f\in C_c^\infty(\mathbb R^{1+2d})$.

The estimate extends to arbitrary nonnegative $f\in L^p$ by
approximation with nonnegative smooth compactly supported functions,
Fatou's lemma, and the positivity of the kernel.  Applying this to $|f|$
gives \eqref{weighted_eqn} for every $f\in L^p$.  With the convention
\eqref{eq:Tgamma}, the resulting map is a bounded sublinear operator.
Indeed,
\[
 |I_{1/p}f-I_{1/p}h|\le I_{1/p}(f-h),
\]
so the weighted estimate also gives its continuous extension by density.

\smallskip
\noindent\emph{Step 2: Construction of an \(A_1\) majorant.}
Choose $q_1$ such that $
q_0<q_1<q $
and define
$$
\widetilde g
:=
\left(\mathsf M(|g|^{q_1})\right)^{\frac{1}{q_1}}.
$$
Since $g\in\mathcal M_{\mathcal K}^{q}$ and $q_1<q$, we have
$|g|^{q_1}\in L_{\mathrm{loc}}^1$. 
By the Lebesgue differentiation theorem \cite[Theorem 1.8]{Heinonen} applied to $|g|^{q_1}$, we have $|g|\le \widetilde g$ almost everywhere.

Moreover, Proposition~\ref{lem:maximal-function}(ii)
implies
\begin{align*}
	\|\widetilde{g} \|_{\M_\K^q}=
	\|
	(\mathsf M(|g|^{q_1}))^{\frac{1}{q_1}}
	\|_{ \M_\K^q}
	\le C_{7}
	\|g\|_{ \M_\K^q}.
\end{align*}
In particular, $\mathsf M(|g|^{q_1})<\infty$ almost everywhere.
Since $q_0<q_1$, the Coifman--Rochberg theorem on spaces of
homogeneous type \cite[Lemma~3.4]{LLW} implies
$$
\widetilde g^{q_0}
=
\bigl(\mathsf M(|g|^{q_1})\bigr)^{q_0/q_1}
\in A_1,
\qquad
\mathsf M(\widetilde g^{q_0})
\le C_8 \widetilde g^{q_0}.
$$

The $A_1$ characteristic in the last formula is bounded in terms of $d,q_0,q_1$, independently of $g$.
Thus $\widetilde g$ satisfies the hypothesis of Step~1 with a uniform
$A_1$ constant. We may now apply the estimate \eqref{weighted_eqn} with
$\widetilde g$ in place of $g$ and obtain
\begin{align*}
	\int_{\mathbb R^{1+2d}}
	|g| \,|I_{\frac{1}{p}}f|^p
	\le
	\int_{\mathbb R^{1+2d}}
	|\widetilde{g}| \,|I_{\frac{1}{p}}f|^p
	\le C_6
	\|\widetilde{g}\|_{\M_\K^{q_0}}\|f\|_{p}^p
	\le C_6
	\|\widetilde{g}\|_{\M_\K^{q}}\|f\|_{p}^p
	\le C_{9}
	\|g\|_{\M_\K^{q}}\|f\|_{p}^p.
\end{align*}
This completes the proof.
\end{proof}

Combining the fractional kernel estimates with the Adams  inequality above yields the following Rellich-type estimates, which will be used in the subsequent PDE and SDE analysis.

\begin{proposition}[Rellich estimates from kinetic Adams inequality]
\label{prop:rellich}
Let $q>1$, $1<p<\infty$,  and $\lambda>0$.  If $g\in\M^q_{\K}$, then
\begin{align}
 \norm{\abs g^{\frac{1}{p}}\A^{-\frac{1}{2p}}}_{p\rightarrow p}
 +\norm{\abs g^{\frac{1}{p}}(\A^*)^{-\frac{1}{2p}}}_{p\rightarrow p}
 &\le C_{d,p,q}\norm g_{\M^q_{\K}}^{\frac{1}{p}},
 \label{eq:rellich-zero}\\
 \norm{\abs g^{\frac{1}{p}}\nabla_v\A^{-\frac{1}{2}-\frac{1}{2p}}}_{p\rightarrow p}
 +\norm{\abs g^{\frac{1}{p}}\nabla_v(\A^*)^{-\frac{1}{2}-\frac{1}{2p}}}_{p\rightarrow p}
 &\le C_{d,p,q}\norm g_{\M^q_{\K}}^{\frac{1}{p}}.
 \label{eq:rellich-gradient}
\end{align}
If $g\in L^\infty$, the right-hand side of each estimate may be replaced by
\begin{equation*}
 C_{d,p}\lambda^{-\frac{1}{2p}}\norm g_\infty^{\frac{1}{p}}.
\end{equation*}
\end{proposition}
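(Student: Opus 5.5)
The Morrey estimates will be reduced to the kinetic Adams inequality (Proposition~\ref{prop:adams}) through the pointwise domination of the fractional kernels by the intrinsic Riesz potential. For \eqref{eq:rellich-zero}, apply \eqref{eq:power-by-I} with $\gamma=1/p$ (admissible since $0<1/p<Q$): for $f\in L^p$,
\[
 \bigl|\abs g^{1/p}\A^{-1/(2p)}f\bigr|\le C\abs g^{1/p}I_{1/p}f,
\]
and the same for $(\A^*)^{-1/(2p)}$. Taking $L^p$ norms and invoking \eqref{eq:adams-operator} gives the bound $C_{d,p,q}\norm g_{\M^q_{\K}}^{1/p}\norm f_p$. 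The constant in \eqref{eq:power-by-I} is independent of $\lambda$ by Proposition~\ref{prop:kernels}, so the estimate is uniform in $\lambda$.

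For \eqref{eq:rellich-gradient}, use instead \eqref{eq:gradient-by-I} with $\gamma=1/p$. Since $-(1+\gamma)/2=-\tfrac12-\tfrac1{2p}$, this gives
\[
 \bigl|\nabla_v\A^{-\frac12-\frac1{2p}}f\bigr|+\bigl|\nabla_v(\A^*)^{-\frac12-\frac1{2p}}f\bigr|\le CI_{1/p}f.
\]
Multiplying by $\abs g^{1/p}$ and applying Proposition~\ref{prop:adams} again concludes.

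Some care is needed in making the operators rigorous. The identities are first stated for $f\in C_c^\infty$ or $\cS$, where the operators act through their integral kernels and the kernel bounds \eqref{eq:fractional-kernel-bound}--\eqref{eq:gradient-fractional-kernel-bound} apply directly. The resulting bound then identifies the composed operator with its continuous extension to $L^p$. This is consistent with the unweighted boundedness \eqref{eq:power-Lp}--\eqref{eq:gradient-power-Lp}, so the weighted operator agrees a.e. with $\abs g^{1/p}$ times the $L^p$-extension.

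For $g\in L^\infty$, bypass the Adams inequality and use $\abs g^{1/p}\le\norm g_\infty^{1/p}$ together with \eqref{eq:power-Lp} for $\gamma=1/p$, which yields $2\lambda^{-1/(2p)}$. For the gradient estimate, use \eqref{eq:gradient-power-Lp} with $\gamma=1/p$, which yields $C_{d,p}\lambda^{-1/(2p)}$. The only delicate point is the gradient kernel bound (ii) of Proposition~\ref{prop:kernels}, whose proof lies in the appendix. Since $\nabla_v$ does not commute with $\A^{-1}$, the order of operations $\nabla_v\A^{-(1+\gamma)/2}$ must match exactly what \eqref{eq:gradient-by-I} controls, and it does here. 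I therefore expect no genuine obstacle beyond checking these exponents.
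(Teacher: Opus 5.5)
Your proposal is correct and follows the paper's proof: pointwise domination by $I_{1/p}$ via \eqref{eq:power-by-I} and \eqref{eq:gradient-by-I} with $\gamma=1/p$, then the Adams inequality \eqref{eq:adams-operator}, extension from $C_c^\infty$ to $L^p$ by density, and the unweighted bounds \eqref{eq:power-Lp}--\eqref{eq:gradient-power-Lp} in the case $g\in L^\infty$. Your remark that the Morrey-case constants do not depend on $\lambda$, because the kernel bounds of Proposition~\ref{prop:kernels} do not, is correct; the paper leaves this implicit.
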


\begin{proof}
Taking $\gamma=\frac{1}{p}$ in \eqref{eq:power-by-I} and \eqref{eq:gradient-by-I}, and then applying the Adams inequality \eqref{eq:adams-operator}, yields \eqref{eq:rellich-zero} and \eqref{eq:rellich-gradient} for $f\in C_c^\infty(\mathbb R^{1+2d})$. The estimates extend to all $f\in L^p$ by density. If $g\in L^\infty$, the corresponding bounds follow directly from \eqref{eq:power-Lp} and \eqref{eq:gradient-power-Lp}, again with $\gamma=\frac{1}{p}$.
\end{proof}

\bigskip

\section{Proof of Theorem \ref{thm:main}}
\label{sec:pde}

In addition to the representation for $\eta$ in Theorem \ref{thm:main},
we will use the equivalent identity
\begin{equation}\label{eq:second-representation}
 \eta={}\A^{-1}f-\A^{-1/(2p)}G_p(b)(I+H_p(b)G_p(b))^{-1}H_p(b)
       \A^{-(1-1/(2p))}f.
\end{equation}
For an admissible regularization  $\{b_n\}_{n}$ of $b$ we will prove a more general result: 
if $f_n \rightarrow f$ in 
$\W_{\K}^{-1-1/p',p}$, then
\begin{equation}\label{eq:approx-convergence}
 \eta_n\rightarrow\eta
 \qquad\text{in }\W_{\K}^{1/p,p}.
\end{equation}
In particular, the point-source equation is given, for $\zeta_0\in\R^{1+2d}$, by
\begin{equation*}
 \A\eta-\divv(b\eta)=\delta_{\{\zeta_0\}}.
\end{equation*}
\begin{proof}[Proof of Theorem \ref{thm:main}]
By \eqref{eq:smallness}, both $I+G_p(b) H_p(b)$ and $I+H_p(b)G_p(b)$ are invertible on
$L^p$.  Formula \eqref{eq:main-representation} is therefore well defined.
Since \(\A^{-(1-1/(2p))}\) maps
$\W_{\K}^{1/p-2,p}$ isometrically into $L^p$ and
$\A^{-1/(2p)}$ maps $L^p$ isometrically onto
$\W_{\K}^{1/p,p}$ (see Appendix \ref{app:fractional-powers}), estimate \eqref{eq:apriori} follows.  The identity
\[
 (I+G_p(b)H_p(b))^{-1}
 =I-G_p(b)(I+H_p(b)G_p(b))^{-1}H_p(b)
\]
gives \eqref{eq:second-representation}. 

By Lemma \ref{lem:factorization}, \eqref{eq:main-representation} is a weak
solution.  Conversely, every weak solution produces an $h\in L^p$ satisfying
\eqref{eq:factorized-equation}.  Invertibility gives a unique $h$, and hence
a unique $\eta=\A^{-1/(2p)}h$.  This proves existence, the estimate, and uniqueness in
part~\textup{(i)}.

For part~\textup{(ii)}, admissibility gives $H_p(b_n)\rightarrow H_p(b)$ and
$G_p(b_n)\rightarrow G_p(b)$ strongly, while their norms have a common product bound
strictly below one.  The resolvent identity for inverses, first on a dense
set and then by uniform boundedness, yields
\[
 (I+G_p(b_n)H_p(b_n))^{-1}g
 \rightarrow (I+G_p(b)H_p(b))^{-1}g
 \quad\text{in }L^p
\]
for every $g\in L^p$.  Together with $f_n\rightarrow f$, this proves
\eqref{eq:approx-convergence}.
For smooth $b_n$ and $f$, the distributional solution $\eta_n$ is
classical.  Indeed, the diffusion fields $\partial_{v_j}$ and the
transport field $\partial_t+v\cdot\nabla_x-b_n\cdot\nabla_v$, together
with their commutators, span all space--time directions: the first
commutators give $\partial_{x_j}$ modulo velocity fields, and the
transport field then gives $\partial_t$.  The smooth zeroth-order
coefficient $\lambda-\divv b_n$ does not change this condition.
H\"ormander's hypoellipticity theorem \cite{Hormander} therefore applies. Specifically, for a smooth
regularized drift \(b_n\), put
\[
 X_j:=\partial_{v_j},\qquad
 Y_n:=\partial_t+v\cdot\nabla_x-b_n\cdot\nabla_v .
\]
Then
\[
 \mathscr A\eta-\divv(b_n\eta)
 =
 \lambda\eta+Y_n\eta-\sum_{j=1}^dX_j^2\eta
 -(\divv b_n)\eta .
\]
The zeroth-order term does not enter the bracket condition.  Moreover,
\[
 [X_j,Y_n]
 =
 \partial_{x_j}
 -\sum_{k=1}^d(\partial_{v_j}b_n^k)X_k.
\]
It follows that every \(\partial_{x_j}\) belongs pointwise to the span of
the fields \(X_k\) and the commutators \([X_j,Y_n]\).  Since
\[
 \partial_t
 =
 Y_n-\sum_{j=1}^dv_j\partial_{x_j}
 +\sum_{k=1}^db_n^kX_k,
\]
these fields and their first commutators span the entire tangent space
\(T_\zeta\mathbb R^{1+2d}\) at every \(\zeta\).  The regularized operator
therefore satisfies Hörmander's bracket condition.  Hörmander's
hypoellipticity theorem \cite{Hormander} consequently implies that if
\[
 \mathscr A u-\divv(b_nu)=F
\]
in the sense of distributions on an open set \(U\) and \(F\in C^\infty(U)\), then
\(u\in C^\infty(U)\).

Finally, for part~\textup{(iii)}, set $s=1+1/p'$.  Condition
\eqref{eq:point-p-range} is equivalent to
$p'>Q-1$, hence
\[
 s=1+\frac1{p'}>\frac Q{p'}.
\]
By Corollary \ref{cor:point-evaluation},
$\delta_{\{\zeta_0\}}\in\W_{\K}^{-s,p}
=\W_{\K}^{-1-1/p',p}$, and so part \textup{(i)} applies. Note also that every negative power of
$\A$ is causal, multiplication by $b$ preserves support in time, and the Duhamel
series in \eqref{eq:main-representation} is a sum of causal terms.
The solution therefore vanishes for $t<t_0$.
\end{proof}

\begin{remark}[Operator-norm formulation]\label{rem:operator-norm}
The Morrey hypothesis is a concrete sufficient condition for the two
Rellich estimates.  Theorem~\ref{thm:main} remains true verbatim for every
Borel drift for which the operators in \eqref{eq:Hp}--\eqref{eq:Gp} are
bounded and $\norm{G_p}_{p\rightarrow p}\norm{H_p}_{p\rightarrow p}<1$.
\end{remark}

\bigskip

\section{Proof of Theorem \ref{thm:backward}}
\label{sec:backward}

We consider here the backward PDE
\begin{equation}\label{eq:backward-equation}
 \A u+b\cdot\nabla_vu=f.
\end{equation}
Recall that
\begin{align*}
 R_p{(b)}:=\abs b^{1/p}\nabla_v\A^{-1/2-1/(2p)},\qquad Q_p{(b)}:=\A^{-1/(2p')}\bigl(b^{\frac{1}{p'}}\cdot\bigr).
\end{align*}
The operator $R_p{(b)}$ maps scalar functions to vector fields and $Q_p{(b)}$ contracts vector
fields with the polar factor $b^{\frac{1}{p'}}$.  By Proposition \ref{prop:rellich}, both
operators are bounded on $L^p$, and their product/composition
has norm less than one under
the same smallness regime \eqref{eq:smallness}, with $p$ and $p'$ placed as in
\eqref{eq:hbound}-\eqref{eq:gbound}.

\begin{proof}[Proof of Theorem \ref{thm:backward}]
Let us show that the weak solution to \eqref{eq:backward-equation} implies necessarily the representation \eqref{eq:backward-representation}. Existence and uniqueness in $\W^{1+1/p,p}_\K$ then follow as in the proof of Theorem \ref{thm:main}. Let $u\in \W^{1+1/p,p}_\K$ be a weak solution to \eqref{eq:backward-equation}. Write \(u=\A^{-1/2-1/(2p)}h\)  with $h\in L^p$ (this simply follows from the very definition of $\W^{1+1/p,p}_\K$).  The gradient estimate (again, in Proposition \ref{prop:rellich}) ensures 
$\nabla_vu\in L^p$ and $|b|^{1/p}\nabla_vu=R_p(b)h\in L^p(\mathbb R^{1+2d},\mathbb R^d)$.
H\"older's inequality on any compact set shows that $b\cdot\nabla_vu=b|b|^{-1/p}\cdot |b|^{1/p}\nabla_vu$ is locally
integrable.  By duality,
\[
 \A^{-1/(2p')}(b\cdot\nabla_vu)=Q_p(b)R_p(b)h\in L^p,
\]
so the first-order 
term $b\cdot\nabla_v u$ 
belongs to $\W_{\K}^{-1/p',p}=\A^{1/(2p')}L^p$.
Since
\(1-(1/2+1/(2p))=1/(2p')\), applying \(\A^{-1/(2p')}\) to
\eqref{eq:backward-equation} gives
\[
	\A^{1-1/(2p')}u+\A^{-1/(2p')}(b\cdot\nabla_vu)= (I+Q_p(b)R_p(b))h=\A^{-1/(2p')}f.
\]
Inverting $I+Q_p(b)R_p(b)$, we obtain $h=(I+Q_p(b)R_p(b))^{-1}\A^{-1/(2p')}f$. By inverting $\A^{1-1/(2p')}$, we end the proof.
\end{proof}

\bigskip

\section{Proof of Theorem \ref{thm:sde}}
\label{sec:sde}

For most of this section we assume that the drift satisfies the global hypothesis
\eqref{eq:HK1}. In particular, there is no separate bounded component. We recall that the
role of global $L^{1+\varepsilon_0}$ integrability is only to shorten the
proof of the higher-power estimate used in the tightness argument in the construction of a solution to \eqref{eq:kinetic-sde} -- we will remove it in Section
\ref{sec:local-removal}.

To start, we will need the time-reversed variant of Theorem \ref{thm:backward}. We only record the corresponding notations: the same proof remains valid.
For $1<r<\infty$, set
\begin{equation}\label{eq:adjoint-backward-factors}
 \begin{split}
  \widehat R_r(b)
   &:=\abs b^{1/r}\nabla_v(\A^*)^{-1/2-1/(2r)},\\
  \widehat Q_r(b)
   &:=(\A^*)^{-1/(2r')}
       \bigl(b^{1/r'}\cdot\bigr).
 \end{split}
\end{equation}
The estimates of Proposition \ref{prop:rellich} apply to these adjoint operators.  We call
$r$ \textit{backward admissible} if
\begin{equation}\label{eq:backward-smallness}
 \norm{\widehat Q_r(b)}_{r\rightarrow r}
 \norm{\widehat R_r(b)}_{r\rightarrow r}<1.
\end{equation}
The notion of an admissible regularization of $b$, and Proposition \ref{prop:regularization}, extend in a straightforward manner to the operators $\widehat R_r(b)$, $\widehat Q_r(b)$:

\begin{proposition}[A variant of Proposition \ref{prop:regularization}]
\label{lem:simultaneous-regularization}
Let $b=b_{\mathrm s}+b_{\mathrm b}$ satisfy \eqref{eq:HK}, and let
$E_{\mathrm f},E_{\mathrm b}\subset(1,\infty)$ be finite sets of forward
and backward exponents r, respectively given by
\[
 \mathfrak g_{r,\lambda}(b)\mathfrak h_{r,\lambda}(b)<1
 \quad(r\in E_{\mathrm f}),
 \qquad
 \|\widehat Q_r(b)\|_{r\rightarrow r}\,\|\widehat R_r(b)\|_{r\rightarrow r}<1
 \quad(r\in E_{\mathrm b}).
\]
Then there exists a
sequence
\[
 b_n=b_{n,\mathrm s}+b_{n,\mathrm b}
 \in C_c^\infty(\R^{1+2d};\R^d)
\]
such that
\begin{equation}\label{eq:local-regularization}
 \begin{aligned}
 b_{n,\mathrm s}&\rightarrow b_{\mathrm s}
     \quad\text{in }L^q_{\mathrm{loc}},
 &\sup_n\norm{b_{n,\mathrm s}}_{\M^q_{\K}}&<\infty,\\
 b_{n,\mathrm b}&\rightarrow b_{\mathrm b}
     \quad\text{in }L^a_{\mathrm{loc}},
 &\sup_n\norm{b_{n,\mathrm b}}_\infty&\le\norm{b_{\mathrm b}}_\infty\quad (a\in [1,\infty)).
 \end{aligned}
\end{equation}
In particular, $b_n\rightarrow b$ in $L^q_{\mathrm{loc}}$.  Moreover, for every
$r\in E:=E_{\mathrm f}\cup E_{\mathrm b}$,
\begin{equation*}
 \begin{aligned}
  H_r(b_n)&\rightarrow H_r(b),&\qquad G_r(b_n)&\rightarrow G_r(b),\\
 \widehat R_r(b_n)&\rightarrow\widehat R_r(b),&
  \qquad\widehat Q_r(b_n)&\rightarrow\widehat Q_r(b),
 \end{aligned}
\end{equation*}
strongly on the corresponding $L^r$ spaces.  The required product
bounds remain uniformly strict:
\begin{align*}
 \sup_n\mathfrak g_{r,\lambda}(b_n)\mathfrak h_{r,\lambda}(b_n)<1 &
       \quad \text{ for all }r\in E_{\mathrm f},\\
 \sup_n\|\widehat Q_r(b_n)\|_{r\rightarrow r}\,\|\widehat R_r(b_n)\|_{r\rightarrow r}<1 &
       \quad \text{ for all } r\in E_{\mathrm b}.
\end{align*}
In fact, each individual backward operator norm does not increase under
this regularization.
If $b_{\mathrm b}=0$ and, in addition to \eqref{eq:HK}, $b\in L^a$ for some
$1<a<\infty$, the sequence may be chosen so that
\begin{equation*}
 \sup_n\norm{b_n}_a\le\norm{b}_a.
\end{equation*}
\end{proposition}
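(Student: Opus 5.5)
We follow the construction in the proof of Proposition~\ref{prop:regularization} verbatim: first a scalar truncation $c_n=a_nb$ with $a_n=\chi_n\min\{1,n/|b|\}$ applied to both components, then a group mollification $\rho_{\eps_n}*c_n$ with a diagonal choice of $\eps_n$. The only new ingredients are (a) the backward operators $\widehat R_r,\widehat Q_r$, (b) handling finitely many exponents at once, and (c) the $L^a$ bound.

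\emph{Truncation step.} Exactly as in \eqref{eq:truncated-factor-identities}, the scalar structure of the truncation gives exact factor identities for the backward operators:
\[
 \widehat R_r(c_n)f=a_n^{1/r}\widehat R_r(b)f,
 \qquad
 \widehat Q_r(c_n)F=\widehat Q_r(b)\bigl(a_n^{1/r'}F\bigr).
\]
The second identity holds since $c_n^{1/r'}=a_n^{1/r'}b^{1/r'}$. Because $0\le a_n\le1$, these identities show that $\|\widehat R_r(c_n)\|_{r\to r}\le\|\widehat R_r(b)\|_{r\to r}$ and likewise for $\widehat Q_r$. This is the claim that individual backward norms do not increase. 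Dominated convergence with $a_n\to1$ a.e.\ then gives strong convergence on $L^r$, for each $r\in E$ simultaneously. The forward operators are handled exactly as before. Since $|c_n|\le|b|$, we also get $\|c_n\|_a\le\|b\|_a$ when $b\in L^a$.

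\emph{Mollification step.} For fixed $n$, $c_n$ is bounded with compact support. The group convolution is a contraction in $\M^q_\K$, in $L^\infty$, and, by left invariance of Lebesgue measure and Minkowski's inequality, also in $L^a$. This yields the uniform Morrey, $L^\infty$ and $L^a$ bounds. Strong convergence $\widehat R_r(\rho_\eps*c_n)\to\widehat R_r(c_n)$ on $C_c^\infty$ follows from \eqref{eq:gradient-power-Lp} with $\gamma=1/r$ and dominated convergence, using $|c_{n,\eps}|\le n$. For $\widehat Q_r$ we use the bound $\|(\A^*)^{-1/(2r')}\|_{r\to r}\le2\lambda^{-1/(2r')}$ from \eqref{eq:power-Lp}, applied to $(c_{n,\eps}^{1/r'}-c_n^{1/r'})F$. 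Both then extend by density.

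\emph{Main obstacle.} Mollification need not preserve the individual backward operator norms. This is why the statement only asserts non-increase for the truncation, and why we must check that the product bounds stay uniformly strict.

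\begin{itemize}
\item For the forward quantities $\mathfrak g\mathfrak h$, the bounds depend only on $\|b_{n,\mathrm s}\|_{\M^q_\K}$ and $\|b_{n,\mathrm b}\|_\infty$, which do not increase.
\item For the backward products we use lower semicontinuity. For fixed $n$ and $r\in E_{\mathrm b}$, strong convergence gives $\liminf_\eps\|\widehat R_r(\rho_\eps*c_n)\|\ge\|\widehat R_r(c_n)\|$, which is the wrong direction. We therefore use a quantitative bound instead. Since $|\rho_\eps*c_n|^{1/r}\le(\rho_\eps*|c_n|)^{1/r}$, we write $\widehat R_r(\rho_\eps*c_n)$ via left translations $\tau_\xi$. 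Left translations commute with $\nabla_v(\A^*)^{-s}$ and are $L^r$ isometries, so
\[
 \bigl\|(\rho_\eps*|c_n|)^{1/r}Tf\bigr\|_r^r=\int\rho_\eps(\xi)\,\bigl\||c_n|^{1/r}T\tau_\xi f\bigr\|_r^r\,d\xi\le\|\widehat R_r(c_n)\|^r\|f\|_r^r .
\]
Thus $\widehat R_r$ is non-increasing under mollification. For $\widehat Q_r$ the analogous duality argument applies, using $|\rho_\eps*c_n|^{1/r'}$ and the adjoint $b^{1/r'}\cdot\nabla_v$-type operator. If this fails, we fall back on choosing $\eps_n$ small enough that the norms exceed those of $c_n$ by at most $2^{-n}$; this is possible since the norms converge for bounded compactly supported $c_n$ by strong convergence on a finite-rank approximation.
\end{itemize}

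Finally, we diagonalize over a countable dense set and over the finite set $E$, as in \eqref{eq:diagonal-factor-errors}.
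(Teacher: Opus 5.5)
Your proposal is correct and takes the same route the paper has in mind. The paper omits this proof, saying it follows that of Proposition~\ref{prop:regularization}: a common scalar truncation $a_nb$, then group mollification, then a diagonal choice of $\eps_n$ over a countable dense set and the finite set $E$. Your left-translation averaging argument is what justifies the paper's unproved claim that the backward norms do not increase under mollification, and it also works for $\widehat Q_r$: its adjoint is $\varphi\mapsto c^{1/r'}\A^{-1/(2r')}\varphi$ (no $\nabla_v$, contrary to what you wrote), whose $L^{r'}$-norm depends only on $\abs{c}$, so averaging with the left-invariant $\A^{-1/(2r')}$ applies and your fallback is unnecessary.
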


The proof of the above follows closely the proof of Proposition \ref{prop:regularization}, so, for the sake of brevity, we skip it.

\begin{corollary}
\label{cor:backward-stability}
Let $\{b_n\}_n$ be bounded smooth drifts such that
$\widehat R_p(b_n)$ and $\widehat Q_p(b_n)$ converge strongly to the
corresponding operators of $b$, and suppose
\[
 \sup_n\norm{\widehat Q_p(b_n)}_{p\rightarrow p}
       \norm{\widehat R_p(b_n)}_{p\rightarrow p}<1.
\]
If $f_n\rightarrow f$ in $\W_{\K^*}^{-1/p',p}$, then the corresponding solutions
of
\[
 \A^*u_n+b_n\cdot\nabla_vu_n=f_n
\]
satisfy
\begin{equation}\label{eq:backward-stability}
 u_n\rightarrow u
 \qquad\text{in }\W_{\K^*}^{1+1/p,p}.
\end{equation}
If $p>Q-1$, the convergence is also uniform on $\R^{1+2d}$.
\end{corollary}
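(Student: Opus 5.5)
We mirror the proof of Theorem~\ref{thm:main}(ii), using the time-reversed representation furnished by (the adjoint version of) Theorem~\ref{thm:backward}. The solutions are written as
\[
 u_n=(\A^*)^{-1/2-1/(2p)}\bigl(I+\widehat Q_p(b_n)\widehat R_p(b_n)\bigr)^{-1}(\A^*)^{-1/(2p')}f_n,
\]
and $u$ is given by the same formula with $b,f$ in place of $b_n,f_n$. Since $(\A^*)^{-1/2-1/(2p)}$ is an isometry from $L^p$ onto $\W_{\K^*}^{1+1/p,p}$, the convergence \eqref{eq:backward-stability} is equivalent to $h_n\to h$ in $L^p$, where $h_n:=(I+\widehat Q_p(b_n)\widehat R_p(b_n))^{-1}g_n$ with $g_n:=(\A^*)^{-1/(2p')}f_n$. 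The convergence $f_n\to f$ in $\W_{\K^*}^{-1/p',p}$ is, by definition of that norm, exactly $g_n\to g:=(\A^*)^{-1/(2p')}f$ in $L^p$.

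The key step is strong convergence of the inverses. Set $T_n=\widehat Q_p(b_n)\widehat R_p(b_n)$ and $T=\widehat Q_p(b)\widehat R_p(b)$. Products of strongly convergent, uniformly bounded sequences converge strongly, so $T_n\to T$ strongly. This uses the uniform boundedness principle, which applies since strong convergence forces $\sup_n\|\widehat R_p(b_n)\|$ and $\sup_n\|\widehat Q_p(b_n)\|$ to be finite. The uniform bound $\theta:=\sup_n\|T_n\|\le\sup_n\|\widehat Q_p(b_n)\|\|\widehat R_p(b_n)\|<1$ gives $\|(I+T_n)^{-1}\|\le(1-\theta)^{-1}$ by the Neumann series. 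The resolvent identity then yields
\[
 (I+T_n)^{-1}g-(I+T)^{-1}g=(I+T_n)^{-1}(T-T_n)(I+T)^{-1}g,
\]
whose norm is at most $(1-\theta)^{-1}\|(T-T_n)(I+T)^{-1}g\|_p\to0$. Finally,
\[
\|h_n-h\|_p\le(1-\theta)^{-1}\|g_n-g\|_p+\|(I+T_n)^{-1}g-(I+T)^{-1}g\|_p\to0 .
\]
One point to check is that $I+T$ is itself invertible. This follows because $\|T\|\le\liminf\|T_n\|\le\theta$ by lower semicontinuity of the norm under strong convergence.

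For $p>Q-1$ we have $1+1/p>Q/p$, so Proposition~\ref{prop:sobolev}(ii) (for $\A^*$) shows that $(\A^*)^{-(1+1/p)/2}$ maps $L^p$ continuously into $C_\infty(\R^{1+2d})$. Hence $\|u_n-u\|_\infty\le C\|h_n-h\|_p\to0$, where $u_n,u$ denote their continuous versions, and this gives the uniform convergence. The only delicate point is the strong (rather than norm) convergence of the inverses, which the uniform strict product bound handles as above. Otherwise the argument is routine, since all the operator bounds have already been established in Proposition~\ref{prop:rellich} and the isometric mapping properties are part of the definitions of the spaces.
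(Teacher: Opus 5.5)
Your proposal is correct and follows essentially the same route as the paper: strong convergence of $\widehat Q_p(b_n)\widehat R_p(b_n)$, the uniform Neumann bound $(1-\theta)^{-1}$ on the inverses, the resolvent identity, the isometric Bessel-potential representation, and Proposition~\ref{prop:sobolev}(ii) for uniform convergence when $p>Q-1$. You also make explicit two points the paper leaves implicit: the Banach--Steinhaus bound on the individual factors, and the invertibility of $I+T$ via $\|T\|\le\liminf_n\|T_n\|$.
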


\begin{proof}
Put
\[
 S_n:=\widehat Q_p(b_n)\widehat R_p(b_n),
 \qquad S:=\widehat Q_p(b)\widehat R_p(b).
\]
Strong convergence of the individual operators and their uniform boundedness
give $S_nh\rightarrow Sh$ for every $h\in L^p$.  If their common product bound is
$\sup_n\|\widehat Q_p(b_n)\|_{p\rightarrow p}\|\widehat R_p(b_n)\|_{p\rightarrow p}=:\vartheta<1$, then
\[
 \sup_n\norm{(I+S_n)^{-1}}_{p\rightarrow p}
 \le\frac1{1-\vartheta}.
\]
The identity
\[
 (I+S_n)^{-1}-(I+S)^{-1}
 =(I+S_n)^{-1}(S-S_n)(I+S)^{-1}
\]
therefore shows that the inverses converge strongly on $L^p$.
Using the adjoint version of \eqref{eq:backward-representation}, together
with
\[
 \norm{f_n-f}_{\W_{\K^*}^{-1/p',p}}
 =\norm{(\A^*)^{-1/(2p')}(f_n-f)}_p,
\]
proves \eqref{eq:backward-stability}.  If $p>Q-1$, then
$1+1/p>Q/p$, and the uniform convergence follows from Proposition
\ref{prop:sobolev}(ii).
\end{proof}

\subsection{Kinetic Krylov estimates}

The next lemma is the bridge from the PDE theory to the SDE.  The vector field $k$ below is an arbitrary test drift.

\begin{lemma}[Krylov estimate]
\label{lem:kinetic-krylov}
Let $p>Q-1$ be backward admissible, and let $\{b_n\}_n$ be the approximating sequence from Proposition
\ref{lem:simultaneous-regularization}, with a uniform backward product bound
strictly smaller than one.  Let
\begin{equation}\label{eq:auxiliary-drift}
 k\in\M^q_{\K}(\R^{1+2d}, \R^d)
       \cap L^1(\R^{1+2d}, \R^d).
\end{equation}
For every bounded Borel function $h$ such that
$\abs k^{1/p}h\in L^p$, the solution of
\begin{equation}\label{eq:backward-krylov-pde}
 \A^*u+b_n\cdot\nabla_vu=\abs{kh}
\end{equation}
satisfies, with a constant independent of $n$,
\begin{equation}\label{eq:krylov-pde-bound-singular}
 \norm u_\infty
 \le C\norm{k}_{\M^q_{\K}}^{1/p'}\times
       \norm{\abs{k}^{1/p}h}_p.
\end{equation}
If $Z^n$ ($=(X^n,V^n)$) denotes the strong 
solution of \eqref{eq:kinetic-sde} with
$b_n$ in place of $b$, then
\begin{equation}\label{eq:krylov-smooth}
 \mathbb E^n_{s,z}\int_s^T\abs{k(r,Z^n_r)h(r,Z^n_r)}\,dr
 \le C_T\norm{k}_{\M^q_{\K}}^{1/p'}\times
       \norm{\mathbf1_{[s,T]}\abs{k}^{1/p}h}_p.
\end{equation}
One also has the unweighted estimate
\begin{equation}\label{eq:krylov-one}
 \mathbb E^n_{s,z}\int_s^T\abs{h(r,Z^n_r)}\,dr
 \le C_T\norm{\mathbf1_{[s,T]}h}_p.
\end{equation}
All constants are uniform in the starting point $(s,z)$.
\end{lemma}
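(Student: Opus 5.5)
The plan is to obtain \eqref{eq:krylov-pde-bound-singular} from the adjoint version of the representation \eqref{eq:backward-representation}, then deduce the SDE bounds by applying It\^o's formula to the smooth solution $u$ along $Z^n$.

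\emph{Step 1: the sup-norm bound for $u$.} Since $b_n$ is smooth and bounded, the time-reversed Theorem \ref{thm:backward} applies with backward admissible $p$. It gives
\[
 u=(\A^*)^{-1/2-1/(2p)}\bigl(I+\widehat Q_p(b_n)\widehat R_p(b_n)\bigr)^{-1}(\A^*)^{-1/(2p')}\abs{kh}.
\]
The source is handled by writing $\abs{kh}=\abs k^{1/p'}\cdot\abs k^{1/p}\abs h$. The operator $(\A^*)^{-1/(2p')}\abs k^{1/p'}$ is the $L^p$-adjoint of $\abs k^{1/p'}\A^{-1/(2p')}$, which is bounded on $L^{p'}$ by Proposition \ref{prop:rellich} (with $p'$ in place of $p$) with norm $\lesssim\norm k_{\M^q_\K}^{1/p'}$. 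Hence
\[
 \norm{(\A^*)^{-1/(2p')}\abs{kh}}_p\lesssim\norm k_{\M^q_\K}^{1/p'}\norm{\abs k^{1/p}h}_p .
\]
The inverse operator has norm at most $(1-\vartheta)^{-1}$, uniformly in $n$, by the uniform product bound. Since $p>Q-1$, we have $1+1/p>Q/p$, so Proposition \ref{prop:sobolev}(ii) shows that $(\A^*)^{-1/2-1/(2p)}$ maps $L^p$ into $C_\infty$. Combining these three facts gives \eqref{eq:krylov-pde-bound-singular} with a constant independent of $n$. For the unweighted estimate \eqref{eq:krylov-one}, I would solve the same equation with source $\abs h$. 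The bound $\norm{(\A^*)^{-1/(2p')}\abs h}_p\le 2\lambda^{-1/(2p')}\norm h_p$ from \eqref{eq:power-Lp} then gives $\norm u_\infty\lesssim\norm h_p$.

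\emph{Step 2: the SDE bounds.} First I would reduce to the case of a smooth, compactly supported source. Replace $\abs{kh}$ by $\mathbf 1_{[s,T]}$ times a smooth nonnegative approximation $F_m$, obtained by mollifying and truncating. By hypoellipticity (as in the proof of Theorem \ref{thm:main}) the solution $u_m$ is then smooth and bounded. Apply It\^o's formula to $e^{-\lambda(r-s)}u_m(r,Z^n_r)$ on $[s,T]$. The generator of $Z^n$ acting on $\varphi(r,\cdot)$ is $\partial_r+v\cdot\nabla_x-b_n\cdot\nabla_v+\Delta_v$, so the equation $\A^*u_m+b_n\cdot\nabla_vu_m=F_m$ turns the drift part into $-e^{-\lambda(r-s)}F_m\,dr$. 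The stochastic integral is a true martingale because $\nabla_vu_m$ is bounded. Taking expectations,
\[
 \mathbb E^n_{s,z}\int_s^Te^{-\lambda(r-s)}F_m(r,Z^n_r)\,dr\le 2\norm{u_m}_\infty .
\]
Finally, $e^{-\lambda(r-s)}\ge e^{-\lambda T}$, which absorbs the discount into $C_T$.

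\emph{Step 3: the limit $m\to\infty$ (main obstacle).} The sup-norm bound on $u_m$ must be uniform in $m$, and the left-hand side must converge. I would choose $F_m\uparrow$ approximations satisfying $\norm{\abs k^{1/p}h_m}_p\le\norm{\abs k^{1/p}h}_p$, or more simply approximate $kh$ in the weighted norm and pass to the limit using Fatou. On the probabilistic side, for fixed $n$ the smooth, bounded drift $b_n$ gives $Z^n$ a bounded density on $(s,T]$ (the law is that of a Gaussian perturbation). Since the right-hand bound controls every approximation uniformly in $m$, Fatou's lemma, applied along an a.e.-convergent approximation of $\abs{kh}$, yields \eqref{eq:krylov-smooth}.

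One delicate point remains: $h$ is only bounded Borel, and $\mathbf 1_{[s,T]}$ is not smooth in time. I would handle this by approximating $\mathbf 1_{[s,T]}$ from above by smooth cutoffs and using monotone convergence. All constants depend only on $d,p,q,\lambda,T$ and $\vartheta$, hence are uniform in $(s,z)$ and $n$.
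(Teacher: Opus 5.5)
Your proposal is correct and follows essentially the same route as the paper: the adjoint backward Duhamel representation, the dual Rellich bound at exponent $p'$ for the source $\abs{k}^{1/p'}\cdot\abs{k}^{1/p}h$, the uniform bound $(1-\vartheta)^{-1}$ on the inverse, and the embedding of Proposition~\ref{prop:sobolev}(ii) for $p>Q-1$, followed by It\^o's formula for the discounted smooth solutions $u_m$ and a truncation/Fatou limit. The extra details you supply (absolute continuity of the law of $Z^n_r$ so that Fatou works along a.e.\ approximations, and the handling of $\mathbf 1_{[s,T]}$) are left implicit in the paper. Two small points: the density of $Z^n_r$ is bounded for each fixed $r>s$ but not uniformly as $r\downarrow s$, which is harmless since absolute continuity is all you use; and if you do not want to prove global boundedness of $\nabla_v u_m$, a stopping-time localization gives the martingale property.
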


\begin{proof}
The adjoint version of Theorem \ref{thm:backward} gives the (unique) representation
\begin{equation}\label{eq:adjoint-backward-representation}
 u=(\A^*)^{-1/2-1/(2p)}
   (I+\widehat Q_p(b_n)\widehat R_p(b_n))^{-1}
   (\A^*)^{-1/(2p')}F,\qquad F:=|kh|,
\end{equation}
for any weak solution of \eqref{eq:backward-krylov-pde}. 
By
duality and the Rellich estimates \eqref{eq:rellich-zero},
\begin{align*}
 \norm{(\A^*)^{-1/(2p')}(\abs{k}h)}_p
 &=\norm{(\A^*)^{-1/(2p')}
   \bigl(\abs{k}^{1/p'}\abs{k}^{1/p}h\bigr)}_p\notag\\
 &=\sup_{\|\varphi\|_{p'}= 1}\langle \abs{k}^{1/p'}(\A)^{-1/(2p')}\varphi, \abs{k}^{1/p}h\rangle\notag\\
 &\le C\norm{k}_{\M^q_{\K}}^{1/p'}
       \norm{\abs{k}^{1/p}h}_p.
\end{align*}
Since \(1+1/p>Q/p\), Proposition \ref{prop:sobolev}(ii) maps the range of
\((\A^*)^{-1/2-1/(2p)}\) into \(C_\infty\).  This proves
\eqref{eq:krylov-pde-bound-singular}.

For $\{F_m\}_m$ a smooth compactly supported approximation of $F$ such that $\|(\A^*)^{-1/(2p')}F_m\|_p \le C\norm{k}_{\M^q_{\K}}^{1/p'}
	\norm{\abs{k}^{1/p}h}_p$ (e.g. Proposition \ref{prop:regularization}), It\^o's formula applied to  $e^{-\lambda (t-s)}u_{m}(t,\cdot)$
	of \eqref{eq:backward-krylov-pde} for $F_m$ in place of $F=|kh|$ along $Z^n_{t}$  gives 
\[
		\mathbb E_{s,z}\int_s^T
		e^{-\lambda(r-s)}F_m(r,Z_r^n)\,dr
		=
		u_m(s,z)
		-\mathbb E_{s,z}\!\left[
		e^{-\lambda(T-s)}u_m(T,Z_T^n)
		\right] \le  2\|u_m\|_{\infty}.
	\]
The resolvent parameter introduces
$e^{-\lambda(r-s)}$ and removing it on $[s,T]$ costs at most $e^{\lambda T}$.
Applying truncation and using Fatou's lemma we extend the estimate to the stated Borel right-hand
sides.

For \eqref{eq:krylov-one}, we use
\eqref{eq:adjoint-backward-representation} with $F=\abs h$ and estimate
$(\A^*)^{-1/(2p')}$ by \eqref{eq:power-Lp}. The same embedding and It\^o argument then yield \eqref{eq:krylov-one}.
\end{proof}

\begin{lemma}[Higher-power Krylov estimate]
\label{lem:global-power-krylov}
Let $p>Q-1$ be backward admissible and assume \eqref{eq:HK1}.  Select a
regularization $\{b_n\}$ that additionally satisfies
\begin{equation}\label{eq:global-higher-integrability-approximation}
 \sup_n\norm{b_n}_{1+\varepsilon_0}
 \le \norm{b}_{1+\varepsilon_0}.
\end{equation}
Then
\begin{equation}\label{eq:global-power-krylov}
 \sup_n\sup_{0\le s<T}\sup_{z\in\R^{2d}}
 \mathbb E^n_{s,z}\int_s^T
       \abs{b_n(r,Z^n_r)}^{1+\varepsilon_0/p}\,dr
 \le C_T<\infty.
\end{equation}
\end{lemma}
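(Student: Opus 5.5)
The natural route is to apply the weighted Krylov estimate \eqref{eq:krylov-smooth} of Lemma~\ref{lem:kinetic-krylov} with the test drift $k=b_n$ and the bounded Borel function $h=\abs{b_n}^{\varepsilon_0/p}$. Then $\abs{kh}=\abs{b_n}^{1+\varepsilon_0/p}$, which is exactly the integrand in \eqref{eq:global-power-krylov}. Each $b_n$ is smooth, bounded and compactly supported, so $k=b_n\in\M^q_{\K}\cap L^1$ and $h$ is bounded. Lemma~\ref{lem:kinetic-krylov} therefore applies and gives
\[
 \mathbb E^n_{s,z}\int_s^T\abs{b_n(r,Z^n_r)}^{1+\varepsilon_0/p}\,dr
 \le C_T\norm{b_n}_{\M^q_{\K}}^{1/p'}\,
 \norm{\mathbf 1_{[s,T]}\abs{b_n}^{1/p}\abs{b_n}^{\varepsilon_0/p}}_p .
\]
The constant $C_T$ is uniform in $n$ and $(s,z)$, since it depends only on the uniform backward product bound for $\{b_n\}$.

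Next, $\norm{\abs{b_n}^{(1+\varepsilon_0)/p}}_p=\norm{b_n}_{1+\varepsilon_0}^{(1+\varepsilon_0)/p}$. By \eqref{eq:global-higher-integrability-approximation} this is at most $\norm{b}_{1+\varepsilon_0}^{(1+\varepsilon_0)/p}$. Dropping the indicator $\mathbf 1_{[s,T]}$ only enlarges the norm, so the bound is uniform in $s$.

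The remaining factor is $\sup_n\norm{b_n}_{\M^q_{\K}}$. Since $b_{\mathrm b}=0$ under \eqref{eq:HK1}, we can take $b_n=b_{n,\mathrm s}$. Proposition~\ref{lem:simultaneous-regularization} gives $\sup_n\norm{b_{n,\mathrm s}}_{\M^q_{\K}}<\infty$, and the construction via truncation and group convolution gives the sharper bound $\le\norm{b}_{\M^q_{\K}}$ by \eqref{eq:truncated-component-bounds}--\eqref{eq:convolution-contractions}. Multiplying the three uniform factors gives \eqref{eq:global-power-krylov}.

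The main point to check is that the choices $k=b_n$, $h=\abs{b_n}^{\varepsilon_0/p}$ are admissible in Lemma~\ref{lem:kinetic-krylov} with constants independent of $n$. The constant in \eqref{eq:krylov-pde-bound-singular} comes from the Rellich bound for $\abs k^{1/p'}(\A)^{-1/(2p')}$, which depends only on $\norm{k}_{\M^q_{\K}}$, and from the uniform resolvent bound $(1-\vartheta)^{-1}$ together with the Sobolev embedding. Neither depends on $k$ in any other way, so uniformity holds. The other thing to verify is that a single regularization can satisfy simultaneously the backward admissibility at $p$ and the $L^{1+\varepsilon_0}$ bound. This is exactly the last clause of Proposition~\ref{lem:simultaneous-regularization}, applied with $a=1+\varepsilon_0$.
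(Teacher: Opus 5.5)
Your proposal is correct and follows the paper's proof. You apply \eqref{eq:krylov-smooth} with $k=b_n$ and a power of $\abs{b_n}$ as $h$, bound $\norm{\abs{b_n}^{(1+\varepsilon_0)/p}}_p^p=\norm{b_n}_{1+\varepsilon_0}^{1+\varepsilon_0}$ via \eqref{eq:global-higher-integrability-approximation}, and use the uniform Morrey bound on $b_n$. The only difference is cosmetic: the paper takes the truncated $h_R=(\abs{b_n}\wedge R)^{\varepsilon_0/p}$ and then lets $R\to\infty$ by monotone convergence, which, as you observe, is unnecessary because each $b_n$ is bounded.
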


\begin{proof}
Apply \eqref{eq:krylov-smooth} with $k=b_n$ and
$h_R=(\abs{b_n}\wedge R)^{\varepsilon_0/p}$. Then
\[
 \mathbb E^n_{s,z}\int_s^T
   \abs{b_n(r,Z_r^n)}
       (\abs{b_n(r,Z_r^n)}\wedge R)^{\varepsilon_0/p}\,dr
 \le C_T\norm{b_n}_{\M^q_{\K}}^{1/p'}
       \left(\int_s^T\!\int_{\R^{2d}}
          \abs{b_n}^{1+\varepsilon_0}\,dx\,dv\,dr\right)^{1/p}.
\]
The right-hand side is bounded uniformly in $n$, $s$, and $z$ by
\eqref{eq:global-higher-integrability-approximation}.  Letting
$R\rightarrow\infty$ and using the monotone convergence theorem proves
\eqref{eq:global-power-krylov}.  The regularization in
Proposition \ref{lem:simultaneous-regularization} can be chosen to satisfy
\eqref{eq:global-higher-integrability-approximation} since truncation,
cutoff and convolution do not increase the $L^{1+\varepsilon_0}$ norm.
\end{proof}

\subsection{Proof of Theorem \ref{thm:sde} completed}

We divide the proof into five steps. Steps 1 and  2 are dedicated to existence and distribution estimates parts of Theorem \ref{thm:sde}: In Step 1, we show tightness of $\{\mathbb P^n_{s,z}:=\mathbb P\circ (Z^n)^{-1}\}_n$.
In Step 2, we identify subsequential limits as martingale  (and so  weak) solutions to \eqref{eq:kinetic-sde}, and establish the stability (as $n\rightarrow \infty$) of the Krylov bounds obtained in Lemmas \ref{lem:global-power-krylov} and \ref{lem:kinetic-krylov} and the estimate \eqref{eq:drift-integrability} on $b$. Steps 3 and 4 address respectively the conditional uniqueness and Markov-Feller properties of this solution, and eventually Step 5 identifies the estimates of the one-time marginal densities.

\smallskip
\noindent
\emph{Step 1: Tightness.}
Choose the single sequence $b_n$ from
Proposition \ref{lem:simultaneous-regularization} at $p$, $p_1$, and, when needed, $r$.
Choose it also to satisfy
\eqref{eq:global-higher-integrability-approximation}.
For every $n$, the kinetic SDE with drift $b_n$ has a unique nonexplosive strong
solution $Z^n=(X^n,V^n)$.
Put
\[
 \alpha:=\frac{\varepsilon_*}{1+\varepsilon_*}\in(0,1).
\]
Apply Lemma \ref{lem:global-power-krylov}.  Then
\begin{equation}\label{eq:uniform-power-drift}
 D_n:=\left(\int_s^T
       \abs{b_n(r,Z^n_r)}^{1+\varepsilon_*}\,dr
       \right)^{1/(1+\varepsilon_*)}
	   \end{equation}
	   satisfies
	   \begin{equation*}
 \sup_n\mathbb E
 D_n^{1+\varepsilon_*}
 =\sup_n\mathbb E
 \int_s^T
       \abs{b_n(r,Z^n_r)}^{1+\varepsilon_*}\,dr\le C_T.
\end{equation*}
Put $B^n_t:=\int_s^t b_n(r,Z^n_r)\,dr$.  H\"older's inequality gives, for $s\le u<t\le T$,
\begin{equation}\label{eq:drift-holder-modulus}
 \abs{B^n_t-B^n_u}
 \le \abs{t-u}^{\varepsilon_* /(1+\varepsilon_*)}D_n.
\end{equation}
In particular,
\begin{equation}\label{eq:Bn-tightness-conditions}
\mathbb P\bigl(\max_{s\le r\le T}|B^n_r|>R\bigr)\le \frac{C_T(T-s)^{\varepsilon_*}}{R^{1+\varepsilon_*}},\quad
\mathbb P\bigl(\mathfrak w(B^n,\delta)>\gamma\bigr)
		\le \frac{C_T\delta^{\varepsilon_*}}
		{\gamma^{1+\varepsilon_*}},
\end{equation}
for $\delta\in(0,1)$ and $\mathfrak w(B^n,\delta)$ the modulus of continuity 
\[
\mathfrak w(B^n,\delta):=\sup_{t,r\in[s,T],|t-r|\le \delta}|B^n_t-B^n_r|.
\]
The first bound in \eqref{eq:Bn-tightness-conditions} together with standard Brownian (moment) estimates immediately gives 
\begin{equation}
	\lim_{R\rightarrow\infty}\sup_n
	\mathbb P
	\left(
	\sup_{s\le r\le T}(\abs{X_r^n}+\abs{V^n_r})>R\right)=0,
\end{equation}
while the second bound, together with standard Brownian (increment) estimates yields, for any $\epsilon>0$, and compact $K\subset \R^{2d}$,
\begin{equation}\label{eq:compact-containment}
	\lim_{\delta\rightarrow0}\limsup_n\sup_{\substack{s\in[0,T]\\z\in K}}
	\mathbb P
	\left(
	\mathfrak w(Z^n,\delta)>\epsilon\right)=0.
\end{equation}
Prokhorov's theorem (see e.g. \cite[Theorem 7.3, Chapter 2]{Billingsley-99}) so applies, ensuring tightness of the measures $\{\mathbb P^n_{s,z}=\mathbb P\circ(Z^n)^{-1}\}_n$ on the path space $\Omega_{s,T}$.

\medskip
\noindent
\emph{Step 2: Occupation measures, limit to a weak solution} and Krylov bounds.
1.~Following Step 1, and without loss of generality, let $\mathbb P^n_{s,z}$ denote a converging (sub-)sequence of the laws of $Z^n$ with $\mathbb P^n_{s,z}\Rightarrow \mathbb P_{s,z}$ on $\Omega_{s,T}$.  Define the approximating occupation measures
\[
 \mu_n(D):=\mathbb E^n_{s,z}\int_s^T
       \mathbf1_D(r,Z_r)\,dr,\qquad D,\text{ a Borel set of }\R^{1+2d},
\]
(we recall that $Z$ is the canonical process of $\Omega_{s,T}$).
The unweighted PDE estimate \eqref{eq:krylov-one} and $L^p$ duality give
\begin{equation}\label{eq:occupation-Lp}
 \mu_n(dr,dz)=\rho_n(r,z)\,dr\,dz
 \qquad \sup_n\norm{\rho_n}_{p'}<\infty.
\end{equation}
The weak convergence $\mathbb P^n_{s,z}\Rightarrow \mathbb P_{s,z}$ on $\Omega_{s,T}$ implies vague convergence of $\mu_n$ to the occupation measure $\mu(D)=\mathbb E_{s,z}\int_s^T\mathbf1_D(r,Z_r)\,dr$, while the compactness in $L^{p'}$ and weak lower semicontinuity of the $L^{p'}$-norm  therefore give,  up to extracting a further converging subsequence of $\mathbb P^n_{s,z}$, 
	\begin{equation}\label{occup-measure-limit}
	\rho_n\rightharpoonup\rho\quad\text{in }L^{p'},\qquad \norm{\rho}_{p'}
	\le\liminf_{n\to\infty}\norm{\rho_n}_{p'}
	\le C_T<\infty.
	\end{equation}
As such   $d\mu=\rho\,dr\,d\zeta$ and 
\begin{align}\label{first-Krylov-bound}
	\mathbb E_{s,z}\int_s^T\abs{h(r,Z_r)}\,dr=\int_{\mathbb R^{1+2d}}\mathbf1_{[s,T]}(r)\abs{h(r,\zeta)}\mu(dr,d\zeta)
	&=\int_{\R^{1+2d}}\mathbf1_{[s,T]}(r)
	\abs{h(r,\zeta)}\rho(r,\zeta)\,dr\,d\zeta\notag\\
	&\le\norm{\mathbf1_{[s,T]}h}_p\norm{\rho}_{p'}
	\le C_T\norm{\mathbf1_{[s,T]}h}_p.
\end{align}
This proves the first claimed Krylov bound.

2.~Let $k$ satisfy \eqref{eq:auxiliary-drift} and set
$d\nu_k=\abs k\,dr\,d\zeta$.  The weighted PDE estimate
\eqref{eq:krylov-smooth} gives
\begin{equation}\label{eq:weighted-density-bound}
 \sup_n\norm{\rho_n}_{L^{p'}(\nu_k)}
 \le C_T\norm{k}_{\M^q_{\K}}^{1/p'}.
\end{equation}
From this, we can check that, for all bounded and compactly supported Borel function $\varphi$, 
\begin{equation}\label{weighted-occupation-limit}
\lim_{n\to\infty}\int\varphi\rho_n\,d\nu_k
=\int\varphi\rho\,d\nu_k.
\end{equation}
For every $R>0$,
the function $\varphi(\abs k\wedge R)$ belongs to $L^p$ with respect
to Lebesgue measure.  Hence \eqref{occup-measure-limit} gives
\[
\int\varphi\rho_n(\abs k\wedge R)\,dr\,d\zeta
\rightarrow 
\int\varphi\rho(\abs k\wedge R)\,dr\,d\zeta.
\]
Moreover, by H\"older's inequality with respect to
$d\nu_k=\abs k\,dr\,d\zeta$,
\[
\begin{aligned}
	&\left|\int\varphi\rho_n
	\bigl(\abs k-\abs k\wedge R\bigr)\,dr\,d\zeta\right|\\
	&\qquad\le
	\norm{\rho_n}_{L^{p'}(\nu_k)}
	\left(
	\int_{\{\abs k>R\}}\abs{\varphi}^p\abs k\,dr\,d\zeta
	\right)^{1/p}.
\end{aligned}
\]
The right-hand side tends to zero as $R\to\infty$, uniformly in $n$,
by \eqref{eq:weighted-density-bound} and the assumption $k\in L^1$. Thus \eqref{weighted-occupation-limit} follows.

Now, arguing as in the previous step, i.e.\,using weak lower semicontinuity of the weighted $L^{p'}$ norm and the a priori weighted Krylov bound, we conclude
\begin{equation}\label{second-Krylov-bound}
 \mathbb E_{s,z}\int_s^T\abs{k(r,Z_r)h(r,Z_r)}\,dr
 \le C_T\norm{k}_{\M^q_{\K}}^{1/p'}
       \norm{\mathbf1_{[s,T]}\abs k^{1/p}h}_p.
\end{equation}

3.~The higher-power estimate on $b_n(r,Z^n_r)$ in Lemma \ref{lem:global-power-krylov} also survives the passage to the limit.  Put
$r_*=1+\varepsilon_*$.  Fix a cutoff
$\chi\in C_c^\infty(\R^{1+2d})$ with $0\le\chi\le1$.
We first observe that
\[
 \mathbb E^n_{s,z}\int_s^T\big(\chi|b_n-b|\big)(r,Z_r)\,dr=\int\chi\abs{b_n-b}\,d\mu_n\rightarrow 0.
\]
Indeed, set $k_n=\chi(b_n-b)$.  These vector fields belong to
$\M^q_{\K}\cap L^1$, their Morrey norms are uniformly bounded,
and $\norm{k_n}_1\to0$ by \eqref{eq:local-regularization}.
Applying \eqref{eq:krylov-smooth} with $k=k_n$ and $h=1$ gives
\[
 \int\chi\abs{b_n-b}\,d\mu_n
 \le C_T\norm{k_n}_{\M^q_{\K}}^{1/p'}
           \norm{k_n}_1^{1/p}
 \rightarrow 0.
\]

For $M>0$, the truncated power satisfies the elementary inequality
\[
 \left|
   (\abs a\wedge M)^{r_*}-(\abs c\wedge M)^{r_*}
 \right|
 \le r_*M^{r_*-1}\abs{a-c},
 \qquad a,c\in\R^d.
\]
This follows from the mean value theorem on $[0,M]$ and
$\bigl|\abs a-\abs c\bigr|\le\abs{a-c}$. Hence, for fixed $M$,
\[
 \left|
 \int\chi(\abs{b_n}\wedge M)^{r_*}\,d\mu_n
 -
 \int\chi(\abs b\wedge M)^{r_*}\,d\mu_n
 \right|
 \le r_*M^{r_*-1}
       \int\chi\abs{b_n-b}\,d\mu_n
 \rightarrow 0.
\]
On the other hand, $\chi(\abs b\wedge M)^{r_*}$ is a bounded,
compactly supported Borel function and hence belongs to $L^p$
with respect to Lebesgue measure.  The weak convergence
$\rho_n\rightharpoonup\rho$ in $L^{p'}$ therefore gives
\[
 \int\chi(\abs b\wedge M)^{r_*}\,d\mu_n
 \rightarrow 
 \int\chi(\abs b\wedge M)^{r_*}\,d\mu.
\]
Combining these two convergences, we obtain
\begin{equation}\label{eq:local-drift-duality-convergence}
 \int\chi(\abs{b_n}\wedge M)^{r_*}\,d\mu_n
 \rightarrow 
 \int\chi(\abs b\wedge M)^{r_*}\,d\mu.
\end{equation}

Since $0\le\chi\le1$, it follows that
\[
 \int\chi(\abs b\wedge M)^{r_*}\,d\mu
 \le\liminf_{n\to\infty}
       \int\abs{b_n}^{r_*}\,d\mu_n.
\]
First let $M\to\infty$.  Then choose cutoffs
$0\le\chi_j\le1$ with $\chi_j\uparrow1$ on $\R^{1+2d}$ and let
$j\to\infty$.  Two applications of the monotone convergence
theorem, followed by \eqref{eq:global-power-krylov}, yield
\begin{equation}\label{eq:power-lower-semicontinuity}
 \begin{split}
 \mathbb E_{s,z}\int_s^T
       \abs{b(r,Z_r)}^{r_*}\,dr
 &=\int\abs b^{r_*}\,d\mu\\
 &\le\liminf_{n\to\infty}
       \int\abs{b_n}^{r_*}\,d\mu_n
 \le C_T.
 \end{split}
\end{equation}
This proves the first estimate in \eqref{eq:drift-integrability}.
The second follows from H\"older's inequality:
\begin{equation}\label{eq:krylov-bound-b-local}
 \mathbb E_{s,z}\int_u^t\abs{b(r,Z_r)}\,dr
 \le (t-u)^{1-1/r_*}
      \left(
       \mathbb E_{s,z}\int_u^t\abs{b(r,Z_r)}^{r_*}\,dr
      \right)^{1/r_*}
 \le C_T^{1/r_*}(t-u)^{1-1/r_*}.
\end{equation}

4.~We next show that the probability measure $\mathbb P_{s,z}$, on $(\Omega_{s,T},\{\mathcal F_t\}_{s\le t\le T})$, constructed above solves the martingale problem for the drift $b$ and initial condition $z$ at time $s$ in the sense of Definition \ref{def:martingale-problem}.  For $\varphi\in C^1([s,T];C_c^2(\R^{2d}))$, and 
\begin{equation*}
	L^n_r\varphi(r,x,v)
	:=\partial_r\varphi(r,x,v)+v\cdot\nabla_x\varphi(r,x,v)
	-b_n(r,x,v)\cdot\nabla_v\varphi(r,x,v)
	+\Delta_v\varphi(r,x,v),
\end{equation*}
under $\mathbb P^n_{s,z}$, the process
$\{ M^{n,\varphi}_t
 :=\varphi(t,Z_t)-\varphi(s,z)-\int_s^tL^n_r\varphi(r,Z_r)\,dr\}_{t\in[s,T]}$ is a martingale and so, for all $s\le t_1\le t_2\le \cdots \le t_\ell\le u<t\le T$ and $G^\ell_u:=G(Z_{t_1},\ldots,Z_{t_\ell})$ for $G:\mathbb R^{2d\ell}\rightarrow \mathbb R$ continuous bounded
$$
\mathbb E^n_{s,z}\left[G^\ell_u\big(M^n_t-M^n_u\big)\right]=0.
$$
(Owing to a monotone-class argument, the "test function" $G^\ell_u$ extends the
identity from cylinder functions $G(Z_{t_1},\ldots,Z_{t_\ell})$ to every bounded
$\mathcal F_u$-measurable random variable.)
As $n\rightarrow\infty$ and since $\mathbb P^{n}_{s,z}\Rightarrow \mathbb P_{s,z}$,
 \begin{align*}
 &\mathbb E^n_{s,z}\left[G^\ell_u\Big(\varphi(t,Z_t)-\varphi(u,Z_u)-\int_u^t \partial_r\varphi(r,Z_r)+V_r\cdot\nabla_x\varphi(r,Z_r)+\triangle_v\varphi(r,Z_r)\,dr\Big)\right]\\
 &\rightarrow \mathbb E_{s,z}\left[G^\ell_u\Big(\varphi(t,Z_t)-\varphi(u,Z_u)-\int_u^t \partial_r\varphi(r,Z_r)+V_r\cdot\nabla_x\varphi(r,Z_r)+\triangle_v\varphi(r,Z_r)\,dr\Big)\right],
 \end{align*}
 and it remains to prove that
 \begin{equation}\label{eq:proof-step-weakexistence}
 \mathbb E^n_{s,z}\left[G^\ell_u\int_u^t b_n(r,Z_r)\cdot\nabla_v\varphi(r,Z_r)\,dr\right]\\
 \rightarrow \mathbb E_{s,z}\left[G^\ell_u\int_u^t b(r,Z_r)\cdot\nabla_v\varphi(r,Z_r)\,dr\right].
 \end{equation}
 To this end, let us introduce a two-parameter approximation $\{b_{n,m}\}_{n,m}$ of $b$ such that $b_m:=\lim_{n\rightarrow \infty}b_{m,n}$ and $b_n:=\lim_{m\rightarrow \infty}b_{m,n}$ still approximate $b$ in the sense of Proposition \ref{prop:regularization} (Such approximation can be constructed by a slight modification of the truncation -- mollification construction in the proof of Proposition \ref{prop:regularization}, e.g. duplicating the cut-off $\chi_n$ into $\chi_n\chi_m$ and modifying the calibrated mollifier $\rho_{\epsilon_n}$ into $\rho_{\epsilon_{n+m}}$ with the bound $2^{-n}+2^{-m}$ in \eqref{eq:diagonal-local-errors}.) With this, it follows that
 \begin{equation*}
 	\begin{split}
 		&\Big|\mathbb E^n_{s,z}\left[G^\ell_u
 		\int_u^t
 		b_n(r,Z_r)\cdot\nabla_v\varphi(r,Z_r
 		)\,dr\right]-
 		\mathbb E_{s,z}\left[G^\ell_u
 		\int_u^t
 		b(r,Z_r)\cdot\nabla_v\varphi(r,Z_r)\,dr\right]\Big|\\
 		&\le \Big|\mathbb E^n_{s,z}\left[G^\ell_u
 		\int_u^t
 		b_{n,m}(r,Z_r)\cdot\nabla_v\varphi(r,Z_r
 		)\,dr\right]-
 		\mathbb E_{s,z}\left[G^\ell_u
 		\int_u^t
 		b_{m}(r,Z_r)\cdot\nabla_v\varphi(r,Z_r)\,dr\right]\Big|\\
 		&\quad +\Big|\mathbb E^n_{s,z}\left[G^\ell_u
 		\int_u^t
 		\Big(b_{n,m}(r,Z_r)-b_n(r,Z_r)\Big)\cdot\nabla_v\varphi(r,Z_r
 		)\,dr\right]\Big|\\
 		&\quad+\Big|\mathbb E_{s,z}\left[G^\ell_u
 		\int_u^t
 		\Big(b_{m}(r,Z_r)-b(r,Z_r)\Big)\cdot\nabla_v\varphi(r,Z_r
 		)\,dr\right]\Big|.
 	\end{split}
 \end{equation*}
 Invoking again $\mathbb P^n_{s,z}\Rightarrow \mathbb P_{s,z}$, the first upper-bound vanishes as $n\rightarrow \infty$, for all fixed $m$. For the rest, choose $\chi\in C_c^\infty(\R^{1+2d})$ equal to one on $\supp\nabla_v\varphi$ 
and put $k_{n,m}:=\chi(b_{n,m}-b_n)$.  Then
$k_{n,m}\in\M^q_{\K}\cap L^1$, its Morrey norms are uniformly bounded in $n,m$, and $\lim_{m}\sup_n\norm{k_{n,m}}_1=0$
by \eqref{eq:local-regularization}.  Hence the estimate \eqref{eq:krylov-smooth} for $Z^n$ gives
\begin{equation}\label{eq:direct-coefficient-error}
 \mathbb E^n_{s,z}\int_s^T
  \abs{b_{n,m}-b_{n}}(r,Z_r)\abs{\nabla_v\varphi(r,Z_r)}\,dr
 \le C_{T,\|\nabla_v\varphi\|_\infty}\norm{k_{n,m}}_{\M^q_{\K}}^{1/p'}
       \norm{k_{n,m}}_1^{1/p}\rightarrow0.
\end{equation}
In the same way,  taking $k_m=\chi(b_{m}-b)$, the estimate \eqref{second-Krylov-bound} for $Z$ yields
\begin{equation*}
	\mathbb E_{s,z}\int_s^T
	\abs{b_m-b}(r,Z_r)\abs{\nabla_v\varphi(r, Z_r)}\,dr
	\le C_{T,\|\nabla_v\varphi\|_\infty}\norm{k_{m}}_{\M^q_{\K}}^{1/p'}
	\norm{k_{m}}_1^{1/p}\rightarrow0.
\end{equation*}	
Subsequently, \eqref{eq:proof-step-weakexistence} follows. The estimates in $2.$ and $3.$ naturally still hold  and this proves $(i)$ in Theorem \ref{thm:sde}.

\smallskip
\noindent
\emph{Step 3: Conditional uniqueness.} For any $(s_0,z_0)\in[0,T]\times\mathbb R^{2d}$, let us introduce the class $\mathcal Q_{s_0,z_0}$ consisting of all probability measures $\mathbb Q_{s_0,z_0}$ on $\Omega_{s_0,T}$ that solve the martingale problem (in the sense of Definition \ref{def:martingale-problem}) for $b$ with initial time-state $(s_0,z_0)$, and lying in the uniqueness class set in $(iii)$ i.e. satisfying, for every Borel bounded function $h$ on $[s_0,T]\times\mathbb R^{2d}$,
\begin{align}
\mathbb E^{\mathbb Q}_{s_0,z_0}\int_{s_0}^T\abs{h(r,Z_r)}\,dr
&\le K_{\mathbb Q}\norm{\mathbf1_{[s_0,T]}h}_p,
		\label{eq:uniqueness-krylov-one}\\
		\mathbb E^{\mathbb Q}_{s_0,z_0}\int_{s_0}^T
		\abs{b(r,Z_r)h(r,Z_r)}\,dr
		&\le K_{\mathbb Q}
		\norm{\mathbf1_{[s_0,T]}\abs b^{1/p}h}_p,
		\label{eq:uniqueness-krylov-b}
	\end{align}
	with $K_{\mathbb Q}$ independent of $h$. Then weak uniqueness of \eqref{eq:kinetic-sde} holds provided that, for all $(s_0,z_0)\in[0,T]\times\mathbb R^{2d}$, $\mathbb Q_{s_0,z_0}\in \mathcal Q_{s_0,z_0}$, $t\in[s_0,T]$ and for all Borel function $F=F(r,z)$ with $z\mapsto F(r,z)$ belonging to a determining class (or dense in a determining class) for $\mathcal P(\R^{2d})$, 
	\begin{equation}\label{eq:conditional-occupation-duality}
 u(s_0,z_0):=\mathbb E^{\mathbb Q_{s_0,z_0}}\Big[\int_{s_0}^t F(r,Z_r)\,dr\Big]
	\end{equation}
	is independent of the choice of ${\mathbb Q_{s_0,z_0}}$. This characterization is classically enough to ensure uniqueness of the solution to the martingale problem of $b$ at $(s_0,z_0)$ (we refer to e.g. \cite[Theorem 6.2.3 and Corollaries 6.2.4 and 6.2.6]{StrookVaradhan-06} and \cite[Chapter 4.4]{EthierKurtz-05}), and to the uniqueness in law of \eqref{eq:kinetic-sde}.

	Fix $F\in C_c^\infty((s_0,T)\times\R^{2d})$, and, applying  Theorem \ref{thm:backward}, let $\omega_n$ be the solution to the backward PDE $\A^*(\omega_n)+b_n\cdot\nabla_v\omega_n=e^{\lambda(t-T)}F$. Backward causality gives $\omega_n(r,z)=0$ for all $r\ge T$, and so $u_n(r,z)=e^{\lambda(T-r)}\omega_n(r,z)$ solves the backward PDE:
	\begin{equation}\label{eq:terminal-approximation}
		\begin{cases}
			(-\partial_t-v\cdot\nabla_x-\Delta_v
			+b_n\cdot\nabla_v)u_n=F,&s_0<t<T,\\
			u_n(T,\cdot)=0.
		\end{cases}
	\end{equation}
	Let us next apply Corollary
	\ref{cor:backward-stability} to $\omega_n=e^{\lambda(t-T)}u_n$ at both the exponents $p$ and $p_1$ given in \eqref{eq:sde-exponents}.
	The estimate at $p$ and the kinetic Sobolev embedding give uniform
	convergence on $[s_0,T]\times\R^{2d}$. The estimate at $p_1$, followed by
	the velocity-gradient bound for the fractional resolvent, gives
	\begin{equation}\label{eq:un-convergence}
		u_n\rightarrow  u_F\quad\hbox{uniformly},
		\qquad
		\sup_n\norm{\nabla_vu_n}_{L^{p_1}([s_0,T]\times\R^{2d})}
		\le C_T\norm F_{p_1}.
	\end{equation}
	In particular, $u_F$ is bounded and continuous, vanishes at time $T$,
	and is only determined by \eqref{eq:terminal-approximation}.
	
	For each fixed $n$, the function $u_n$ is smooth.  Indeed, the
	whole-space resolvent construction above has smooth coefficients
	and a smooth source, so the same hypoellipticity argument used in
	the proof of \cref{thm:main} applies to the adjoint equation.
	The extension of $F$ by zero is smooth because its support lies
	strictly between $s_0$ and $T$.  Thus $u_n$ is smooth on a neighborhood
	of $[s_0,T]\times\R^{2d}$ and is bounded by the PDE estimates.
	In particular, its derivatives are bounded on every compact
	space--time cylinder, with bounds that may depend on $n$.
	
	Consider now $\mathbb Q_{s_0,z_0}\in\mathcal Q_{s_0,z_0}$ for which, since $u_n$ is smooth, 
	\[
	u_n(t,Z_t)-	u_n(s_0,z_0)-\int_{s_0}^t(\partial_r+V_r\cdot\nabla_x+\triangle_v-b\cdot\nabla_v)u_n(r,Z_r)\,dr,\qquad s_0\le t\le T,
	\]
	is a martingale. As such, define the bounded stopping time 
	\[
	\tau_R:=T\wedge\inf\{r\ge s_0:\abs{Z_r}\ge R\}.
	\]
	Owing to \eqref{eq:terminal-approximation}, it follows that 
	\begin{equation}\label{eq:uniqueness-error}
	u_n(s_0,z_0)=\mathbb E^{\mathbb Q_{s_0,z_0}}\Big[u_n(\tau_R,Z_{\tau_R})+\int_{s_0}^{\tau_R}F(r,Z_r)\,dr\Big]-\mathbb E^{\mathbb Q_{s_0,z_0}}\Big[\int_{s_0}^{\tau_R}((b_n-b)\cdot\nabla_vu_n)(r,Z_r)\,dr\Big].
	\end{equation}
	Let us control the last ``error'' term. Recall from \eqref{eq:regularization-scalar-cutoff} and
	\cref{lem:simultaneous-regularization} that the approximating sequence $\{b_n\}_n$ used in
	Step~1 has the form
	$
	b_n=\rho_{\eps_n}*(a_nb),
	$
	with $a_n$ the $b$-dependent truncation given by \eqref{eq:regularization-scalar-cutoff}. Thus
	\[
	b_n-b=(b_n-a_nb)+(a_n-1)b,
	\]
	and we can estimate these two contributions separately and show both tend to $0$ as $n\rightarrow \infty$.
	
	As a preliminary, for $p$ and $p_1$ as above, put
	\begin{equation}\label{eq:holder-a}
	a:=\big(p^{-1}-p_1^{-1}\big)^{-1}\Leftrightarrow	\frac1a=\frac1p-\frac1{p_1}.
	\end{equation}
	Since $p_1>p$, $a$ is finite.  Moreover,
	$\frac ap=\frac{p_1}{p_1-p}<q$
	by \eqref{eq:sde-exponents}.
	In the original diagonal choice of the mollification scales
	$\eps_n$ (see \eqref{eq:diagonal-local-errors} in the proof of Proposition \ref{prop:regularization}), we have imposed the additional requirement
	\begin{equation}\label{eq:mollification-error}
		\norm{\rho_{\eps_n}*(a_nb)-a_nb}_a\le2^{-n}.
	\end{equation}
	This is possible because, for each fixed $n$, the vector field
	$a_nb$ is bounded and compactly supported, and hence belongs to
	$L^a$. In turn, by the standard properties of the mollifiers,
	\[
	\norm{\rho_\eps*(a_nb)-a_nb}_a\rightarrow 0
	\qquad\text{as }\eps\downarrow0.
	\]
	Thus, \eqref{eq:mollification-error} can be imposed together
	with all the earlier requirements in the same diagonal construction.
	The functions $u_n$ are defined using this fixed sequence.
	
	We first estimate the truncation contribution $(a_n-1)b$.  Fix $R$ and write
	\[
	K_R=[s_0,T]\times\{\abs z\le R\},
	\qquad E_n=K_R\cap\{\abs b>n\}.
	\]
	For all sufficiently large $n$, the spatial--temporal cutoff
	$\chi_n$ in \eqref{eq:regularization-scalar-cutoff} equals one
	on $K_R$.  Therefore, on this cylinder,
	\[
	a_n=\min\left\{1,\frac{n}{\abs b}\right\},
	\qquad
	\abs{(a_n-1)b}
	=(\abs b-n)_+
	\le\abs b\,\mathbf1_{E_n},
	\]
	with the usual convention that $a_n=1$ on $\{b=0\}$.
	Using this inequality, the weighted
	Krylov estimate \eqref{eq:uniqueness-krylov-b} ensures that 
	\[
	\begin{aligned}
		&\left|
		\mathbb E^{\mathbb Q_{s_0,z_0}}\left[
		\int_{s_0}^{\tau_R}
		(a_n-1)b(r,Z_r)\cdot\nabla_vu_n(r,Z_r)\,dr
		\right]\right|\\
		&\quad\le
		\mathbb E^{\mathbb Q_{s_0,z_0}}\int_{s_0}^T
		\mathbf1_{E_n}(r,Z_r)\abs{b(r,Z_r)}
		\abs{\nabla_vu_n(r,Z_r)}\,dr\\
		&\quad\le
		 K_{\mathbb Q}
		\norm{\mathbf1_{E_n}\abs b^{1/p}\nabla_vu_n}_p.
	\end{aligned}
	\]
	H\"older's inequality with the expression \eqref{eq:holder-a} of $a$ gives
	\begin{equation*}
		\norm{\mathbf1_{E_n}\abs b^{1/p}\nabla_vu_n}_p
		\le
		\left(\int_{E_n}\abs b^{a/p}\,dr\,d\zeta\right)^{1/a}
		\norm{\nabla_vu_n}_{L^{p_1}(K_R)}
		\rightarrow 0.
	\end{equation*}
	Indeed, the second factor is uniformly bounded by
	\eqref{eq:un-convergence}, while
	\[
	\int_{E_n}\abs b^{a/p}\,dr\,d\zeta
	\le n^{a/p-q}\int_{K_R}\abs b^q\,dr\,d\zeta
	\rightarrow 0,
	\]
	because $b\in L^q_{\mathrm{loc}}$ and $a/p<q$.
	This proves that the truncation contribution tends to zero
	for every fixed $R$.
	
	For the mollification contribution $b_n-a_nb$, apply the unweighted Krylov
	estimate \eqref{eq:uniqueness-krylov-one}, followed by H\"older's
	inequality:
	\begin{align*}
		\left|
		\mathbb E^{\mathbb Q_{s_0,z_0}}\left[
		\int_{s_0}^{\tau_R}
		(b_n-a_nb)(r,Z_r)\cdot\nabla_vu_n(r,Z_r)\,dr
		\right]\right|
		&\le
		 K_{\mathbb Q}
		\norm{\mathbf1_{K_R}\abs{b_n-a_nb}
			\abs{\nabla_vu_n}}_p\\
		&\le
		 K_{\mathbb Q}
		\norm{b_n-a_nb}_a
		\norm{\nabla_vu_n}_{L^{p_1}(K_R)}\\
		&\le
		2^{-n} K_{\mathbb Q}
		C_T\norm F_{p_1}
		\rightarrow 0,
	\end{align*}
	using, for the last inequality, \eqref{eq:mollification-error} and
	\eqref{eq:un-convergence}.

	Combining the two bounds proves, for every fixed $R$,
	\begin{equation}\label{error-bound}
	\lim_n\left|
	\mathbb E^{\mathbb Q_{s_0,z_0}}\left[
	\int_{s_0}^{\tau_R}
	(b_n-b)(r,Z_r)\cdot\nabla_vu_n(r,Z_r)\,dr
	\right]\right|= 0.
	\end{equation}
	Therefore, letting $n\to\infty$ in the stopped identity \eqref{eq:uniqueness-error}, using the uniform
	convergence in \eqref{eq:un-convergence}, gives
	\[
	u_F(s_0,z_0)-\mathbb E^{\mathbb Q_{s_0,z_0}}\left[u_F(\tau_R,Z_{\tau_R})\right]=\mathbb E^{\mathbb Q_{s_0,z_0}}\left[
	\int_{s_0}^{\tau_R}F(r,Z_r)\,dr\right].
	\]
	For every continuous path on $[s_0,T]$, $\tau_R\rightarrow T$ as $R\rightarrow \infty$, and the boundedness of $u_F$ and $F$, together with $u_F(T,\cdot)=0$ yields the claim  \eqref{eq:conditional-occupation-duality}.

\smallskip
\noindent
\emph{Step 4: The Feller and Markov properties.}  We write, for all bounded Borel function $g$,
\begin{equation*}
 P^{s,t}g(z):=\mathbb E_{s,z}g(Z_t),
 \qquad 0\le s\le t\le T.
\end{equation*}
The Markov property of the weak solution $Z$ of \eqref{eq:kinetic-sde} is a direct consequence of Steps 1. and 2. yielding the wellposedness of the martingale problem for $b$ at initial time-state $(s,z)$ (see  \cite[Theorem 10.1.1]{StrookVaradhan-06}).

We can highlight that, if $(s_n,z_n)\rightarrow(s,z)$, the estimates in Step~1 are uniform for initial
points in compact sets and for starting times in $[0,T]$.  Every limit point
is therefore identified by Step~2 as a weak solution started from $(s,z)$
and satisfies the same Krylov estimates.  Conditional uniqueness gives
$\mathbb P_{s_n,z_n}\Rightarrow\mathbb P_{s,z}$, so
$P^{s,t}g\in C_b$ for $g\in C_\infty$.

Since each $\mathbb P_{s,z}$ is a probability measure on
$\Omega_{s,T}$, its transition kernel is conservative:
\begin{equation*}
 P^{s,t}\mathbf1(z)
 =\mathbb P_{s,z}(Z_t\in\R^{2d})=1,
 \qquad 0\le s\le t\le T.
\end{equation*}

We now prove strong continuity of $P^{s,t}$ in $C_\infty(\mathbb R^{2d})$. Fix
$g\in C_\infty(\R^{2d})$ and write $h=t-s$. Set
$\Phi_h(x,v)=(x+hv,v)$. The integral form of the SDE \eqref{eq:kinetic-sde} gives
\[
 Z_t=\Phi_h z+R_{s,t},
\]
where the velocity and position components of $R_{s,t}$ are
\[
 \begin{aligned}
 R^v_{s,t}
 &=-\int_s^t b(r,Z_r)\,dr+\sqrt2(W_t-W_s),\\
 R^x_{s,t}
 &=-\int_s^t(t-r)b(r,Z_r)\,dr
   +\sqrt2\int_s^t(W_r-W_s)\,dr.
 \end{aligned}
\]
From our PDE theory, uniformly in $(s,z)$,
\begin{equation*}
 \mathbb E_{s,z}\int_s^t\abs{b(r,Z_r)}\,dr
 \le C_T^{1/(1+\varepsilon_*)}
       \abs{t-s}^{\varepsilon_*/(1+\varepsilon_*)}\rightarrow0
 \quad\text{as }t-s\downarrow0.
\end{equation*}
This yields
\begin{equation*}
 \sup_z\mathbb E_{s,z}|R_{s,t}|
 \le C_T\left(
 h^{\varepsilon_*/(1+\varepsilon_*)}+h^{1/2}
 \right),
 \qquad 0\le s<t\le T.
\end{equation*}

Every function in $C_\infty(\R^{2d})$ is uniformly continuous.
Let
\[
 \omega_g(\delta)
 :=\sup\{|g(y)-g(y')|:\ |y-y'|\le\delta\}
\]
be the modulus of continuity of $g$. Splitting according to whether
$|R_{s,t}|\le\delta$ or not, and applying Markov's inequality, we obtain
\[
 \begin{aligned}
 |P^{s,t}g(z)-g(\Phi_hz)| & =\big| \mathbb E_{s,z} g(\Phi_h z + R_{s,t}) - g(\Phi_h z)\big| \\[2mm]
 &\le\omega_g(\delta)
   +2\|g\|_\infty
       \mathbb P_{s,z}(|R_{s,t}|>\delta)\\
 &\le\omega_g(\delta)
   +\frac{2\|g\|_\infty}{\delta}
       \mathbb E_{s,z}|R_{s,t}|.
 \end{aligned}
\]
Taking the supremum over $s$ and $z$, then letting $h\downarrow0$
and afterwards $\delta\downarrow0$, proves
\[
 \sup_{0\le s\le T-h}
 \|P^{s,s+h}g-g\circ\Phi_h\|_\infty\rightarrow 0.
\]
It remains to show that
$\|g\circ\Phi_h-g\|_\infty\to0$.
Here uniform continuity alone is insufficient, since
$|\Phi_hz-z|=h|v|$ is unbounded as $v$ varies.
Fix $\epsilon>0$ and choose $R$ so that
$|g(y)|<\epsilon$ whenever $|y|>R$.
For $0\le h\le1$, the inverse of $\Phi_h$ has operator norm
at most two, so $|\Phi_hz|\ge|z|/2$.
Thus, when $|z|>2R$,
\[
 |g(\Phi_hz)-g(z)|\le2\epsilon.
\]
When $|z|\le2R$, we instead use
$|\Phi_hz-z|\le2Rh$ to obtain
\[
 |g(\Phi_hz)-g(z)|\le\omega_g(2Rh).
\]
These two estimates prove the required convergence. Combining
it with the preceding estimate gives
\begin{equation*}
 \lim_{h\downarrow0}
 \sup_{0\le s\le T-h}
 \|P^{s,s+h}g-g\|_\infty=0.
\end{equation*}

Let us verify joint strong continuity in
the two time variables. Suppose
$(s_n,t_n)\to(s,t)$ with $s_n\le t_n$ and $s\le t$.
The weak continuity of the solution laws established above
implies that, whenever $z_n\rightarrow z$,
\[
 P^{s_n,t_n}g(z_n)\rightarrow  P^{s,t}g(z).
\]
To interpret this statement on a common path space, extend
each path by a constant before its starting time so that it is still in
$C([0,T];\R^{2d})$. The assertion then follows because
$(\omega,r)\mapsto\omega(r)$ is continuous for the uniform
topology on paths. This sequential statement implies
\[
 \sup_{|z|\le R}
 |P^{s_n,t_n}g(z)-P^{s,t}g(z)|\rightarrow 0
 \qquad\text{for every }R<\infty.
\]
Indeed, otherwise one could choose a sequence of points in
the closed ball witnessing failure of convergence and then
pass to a convergent subsequence, contradicting the preceding
statement and continuity of $P^{s,t}g$.

To pass from compact sets to the whole space, we use the
uniform version of the vanishing-at-infinity estimate.
Since
\[
 |\Phi_{t-s}z|\ge\frac{|z|}{1+T},
 \qquad
 \sup_{0\le s\le t\le T}\sup_z
       \mathbb E_{s,z}|R_{s,t}|\le C_T,
\]
we have, for $|z|>(1+T)R$,
\[
 |P^{s,t}g(z)|
 \le
 \sup_{|y|>R}|g(y)|
 +\frac{C_T\|g\|_\infty}{|z|/(1+T)-R}.
\]
Here the second term bounds the probability that $Z_t$ lies
in the ball of radius $R$: on this event,
$|R_{s,t}|\ge|\Phi_{t-s}z|-R$.
First choosing $R$ large and then $|z|$ large shows that
$P^{s,t}g(z)\to0$ uniformly over $0\le s\le t\le T$.
Together with convergence on compact sets, this proves
\[
 \|P^{s_n,t_n}g-P^{s,t}g\|_\infty\rightarrow 0,
\]
which is joint strong continuity.

Thus the transition operators form a strongly continuous
Feller propagator on $C_\infty(\R^{2d})$.

\begin{remark}
Another argument that could be applied above consists of adjoining time to the state space and using the standard result that weak continuity of semigroups implies strong continuity.
\end{remark}

\smallskip
\noindent
\emph{Step 5: The probability density of $Z_t$.} Fix
\begin{equation}\label{eq:density-exponent}
 1<r<\frac{Q-1}{Q-2}
\end{equation}
from Theorem \ref{thm:main}(iii) and let $\eta_{s,z}^{\lambda}$ denote the 
 solution of the point-source PDE:
\begin{equation*}
 \A\eta-\divv(b\eta)=\delta_{\{(s,z)\}}.
\end{equation*}
Our goal is to prove
\begin{equation}\label{eq:density-identification}
 e^{-\lambda(t-s)}\mathbb P_{s,z}(Z_t\in d\zeta)\,dt
 =\eta_{s,z}^{\lambda}(t,\zeta)\,dt\,d\zeta.
\end{equation}
For smooth $b_n$, the discounted space--time transition density from $(s,z)$
is the point-source solution of
\[
 \A\eta_n-\divv(b_n\eta_n)=\delta_{\{(s,z)\}}.
\]
At the exponent $r$ in \eqref{eq:density-exponent},
Theorem \ref{thm:main}(ii)--(iii) gives
$\eta_n\rightarrow\eta_{s,z}^{\lambda}$ in $\W_{\K}^{1/r,r}$.  On the other hand,
weak convergence $\mathbb P^n_{s,z}\Rightarrow \mathbb P_{s,z}$ of the path laws following Step 1, identifies the distributional limit of
$\eta_n\,dt\,d\zeta$ with
$e^{-\lambda(t-s)}\mathbb P_{s,z}(Z_t\in d\zeta)\,dt$.  The limits agree,
which proves \eqref{eq:density-identification} and yields (iv). \hfill \qed

\begin{remark}
We could give an alternative proof of Theorem \ref{thm:sde} that reverses the order of
the preceding proof: first one constructs a conservative Feller propagator
directly from the backward PDE, and only then identifies the corresponding probability measures, forming a strong Markov family parameterized by the initial time and the initial point, as weak solutions of
\eqref{eq:kinetic-sde}. Our proof in \cite{KiY} has this structure. This construction does not use
tightness of the laws of the smooth SDEs and does not require conditional uniqueness in the Krylov class. The same applies to the proof of Theorem \ref{thm:sde-local} that we give next.
\end{remark}

\bigskip

\section{Removal of global integrability condition using weights (Proof of Theorem \ref{thm:sde-local})}
\label{sec:local-removal}

We now remove the global \(L^{1+\varepsilon_0}\) assumption from Theorem \ref{thm:sde} and restore the bounded component of the drift permitted by the PDE
theory. Specifically, throughout this section we replace the global hypothesis
\eqref{eq:HK1} by the original condition \eqref{eq:HK}:
\begin{equation}\label{eq:HK1-local}
 b=b_{\mathrm s}+b_{\mathrm b},
 \qquad b_{\mathrm s}\in
 \M^q_{\K}(\R^{1+2d}, \R^d),
 \qquad b_{\mathrm b}\in L^\infty(\R^{1+2d}, \R^d),
 \qquad q>1,
\end{equation}
where no global Lebesgue $L^{1+\epsilon}$-integrability is imposed on $b_{\mathrm s}$.  The
Morrey norm of $b_{\mathrm s}$ is small, whereas the size of
$b_{\mathrm b}$ is arbitrary and is absorbed by taking $\lambda$ large.
All backward and forward smallness conditions on the denominator of the Duhamel series used below are assumed strict.

The localization used to handle \eqref{eq:HK} will be carried out using a vanishing at infinity weight, centered at
the point $\zeta_0$ from which the diffusion $Z_t=(X_t,V_t)$ in \eqref{eq:kinetic-sde} is started. Namely, define
\begin{equation*}
 \kappa>Q,
 \qquad
 \rho_{\zeta_0}(\zeta):=
       \bigl(1+d_*(\zeta,\zeta_0)\bigr)^{-\kappa},
 \qquad \zeta_0,\zeta\in\R^{1+2d},
\end{equation*}
and write
\[
 \norm f_{L^p_{\rho_{\zeta_0}}}
 :=\left(\int_{\R^{1+2d}}\abs{f(\zeta)}^p
                  \rho_{\zeta_0}(\zeta)\,d\zeta\right)^{1/p}.
\]
The centering at $\zeta_0$ is essential: $\rho_{\zeta_0}(\zeta_0)=1$, so a weighted
estimate followed by evaluation at $\zeta_0$ is uniform in the starting
point. Note also that the triangle inequality for $d_*$ gives
\begin{equation}\label{eq:weight-moderate}
 \frac{\rho_{\zeta_0}(\zeta)}{\rho_{\zeta_0}(\xi)}
 \le C_{d,\kappa}\bigl(1+d_*(\zeta,\xi)\bigr)^\kappa,
\end{equation}
uniformly in $\zeta_0$.  Moreover, by \eqref{eq:ball-volume} and
$\kappa>Q$,
\begin{equation}\label{eq:weight-integrable}
 \sup_{\zeta_0}\int\rho_{\zeta_0}(\zeta)\,d\zeta<\infty.
\end{equation}

\begin{lemma}[Centered weighted backward estimate]
\label{lem:weighted-backward}
Let $p>Q-1$, $q>1$, and let
$b=b_{\mathrm s}+b_{\mathrm b}$ satisfy \eqref{eq:HK1-local}.  For
$\lambda\ge1$, define
\begin{align}
 \mathfrak r_{p,\lambda}^{\rho}(b)
 &:=C^{R,\mathrm s}_{d,p,q,\kappa}
       \norm{b_{\mathrm s}}_{\M^q_{\K}}^{1/p}
   +C^{R,\mathrm b}_{d,p,\kappa}\lambda^{-1/(2p)}
       \norm{b_{\mathrm b}}_\infty^{1/p},
 \label{eq:weighted-r-bound}\\
 \mathfrak q_{p,\lambda}^{\rho}(b)
 &:=C^{Q,\mathrm s}_{d,p,q,\kappa}
       \norm{b_{\mathrm s}}_{\M^q_{\K}}^{1/p'}
   +C^{Q,\mathrm b}_{d,p,\kappa}\lambda^{-1/(2p')}
       \norm{b_{\mathrm b}}_\infty^{1/p'}.
 \label{eq:weighted-q-bound}
\end{align}
The operators in \eqref{eq:adjoint-backward-factors} extend to
$L^p_{\rho_{\zeta_0}}$, uniformly in the center $\zeta_0$, and satisfy
\begin{align}
 \norm{\widehat R_p(b)}_{L^p_{\rho_{\zeta_0}}\rightarrow
       L^p_{\rho_{\zeta_0}}}
 &\le \mathfrak r_{p,\lambda}^{\rho}(b),
 \label{eq:weighted-R}\\
 \norm{\widehat Q_p(b)}_{L^p_{\rho_{\zeta_0}}\rightarrow
       L^p_{\rho_{\zeta_0}}}
 &\le \mathfrak q_{p,\lambda}^{\rho}(b).
 \label{eq:weighted-Q}
\end{align}
Moreover,
\begin{equation}\label{eq:weighted-point-evaluation}
 \abs{(\A^*)^{-1/2-1/(2p)}f(\zeta_0)}
 \le C_{d,p,\lambda,\kappa}\norm f_{L^p_{\rho_{\zeta_0}}}.
\end{equation}
Therefore, if
\begin{equation}\label{eq:weighted-smallness}
 \Theta_{p,\lambda}^{\rho}(b)
 :=\mathfrak q_{p,\lambda}^{\rho}(b)
   \mathfrak r_{p,\lambda}^{\rho}(b)<1,
\end{equation}
then $I+\widehat Q_p(b)\widehat R_p(b)$ is invertible on
$L^p_{\rho_{\zeta_0}}$, with inverse norm bounded uniformly in
$\zeta_0$.
\end{lemma}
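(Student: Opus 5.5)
The plan is to reduce the weighted bounds to the unweighted Adams/Rellich machinery of Section~\ref{sec:morrey} by conjugating with the weight. Write $w:=\rho_{\zeta_0}^{1/p}$, so that $\norm{f}_{L^p_{\rho_{\zeta_0}}}=\norm{wf}_p$. Bounding an operator $T$ on $L^p_{\rho_{\zeta_0}}$ is then equivalent to bounding $wTw^{-1}$ on $L^p$.

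\textbf{The $\widehat R_p$ estimate.} For $T=\widehat R_p(b)$, the conjugated operator has kernel
\[
\abs b^{1/p}(\zeta)\,\nabla_v(\A^*)^{-1/2-1/(2p)}(\zeta,\xi)\,\frac{w(\zeta)}{w(\xi)} .
\]
By the moderateness estimate \eqref{eq:weight-moderate}, the ratio $w(\zeta)/w(\xi)$ is at most $C(1+d_*(\zeta,\xi))^{\kappa/p}$.

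The key step is a kernel bound that absorbs this polynomial factor using the exponential damping $e^{-\lambda\tau}$ in \eqref{eq:fractional-power}. I would revisit the proof of Proposition~\ref{prop:kernels} and aim for
\[
\abs{\nabla_v(\A^*)^{-(1+\gamma)/2}(\zeta,\xi)}\,(1+d_*(\zeta,\xi))^{\kappa/p}\le C_{\kappa}\, d_*(\zeta,\xi)^{\gamma-Q}\,
\]
for $\lambda\ge1$. The far region $d_*\ge1$ should be controlled by $e^{-c\lambda\tau}$ together with the Gaussian decay in \eqref{eq:gaussian-upper}. On that region the kernel decays faster than any power, so the extra factor is harmless, and constants stay uniform in $\lambda\ge1$. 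With $\gamma=1/p$, the conjugated operator is dominated pointwise by $\abs b^{1/p}I_{1/p}$. It can be split as $\abs{b_{\mathrm s}}^{1/p}I_{1/p}+\abs{b_{\mathrm b}}^{1/p}(\cdots)$.

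For the singular part, Proposition~\ref{prop:adams} gives the $\norm{b_{\mathrm s}}^{1/p}_{\M^q_\K}$ term. For the bounded part, I would use the $L^p$ bound of the weighted kernel, that is, an $L^1$ norm of the modified kernel in the style of Schur's test. This yields a factor $\lambda^{-1/(2p)}$, by repeating the computation behind \eqref{eq:gradient-power-Lp} with the polynomial factor inserted. The factor is absorbed as above for $\lambda\ge1$.

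\textbf{The $\widehat Q_p$ estimate.} Here $\widehat Q_p(b)=(\A^*)^{-1/(2p')}(b^{1/p'}\cdot)$, and I would treat it by duality. The adjoint of $w\widehat Q_p w^{-1}$ on $L^{p'}$ has kernel $\abs b^{1/p'}(\xi)\,\A^{-1/(2p')}(\xi,\zeta)\,w(\zeta)/w(\xi)$, up to the unit vector. The same weighted kernel bound, now with exponent $\kappa/p$ and $\gamma=1/p'$, reduces this to Proposition~\ref{prop:adams} at exponent $p'$. That gives $\norm{b_{\mathrm s}}^{1/p'}_{\M^q_\K}$, and the bounded part gives $\lambda^{-1/(2p')}$. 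Uniformity in $\zeta_0$ is automatic, since only \eqref{eq:weight-moderate} is used.

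\textbf{Point evaluation and invertibility.} For \eqref{eq:weighted-point-evaluation}, write
\[
(\A^*)^{-1/2-1/(2p)}f(\zeta_0)=\int K(\zeta_0,\xi)f(\xi)\,d\xi
\]
and apply H\"older with the weight: this is bounded by $\norm{f}_{L^p_{\rho_{\zeta_0}}}\bigl(\int \abs{K(\zeta_0,\xi)}^{p'}\rho_{\zeta_0}(\xi)^{-p'/p}d\xi\bigr)^{1/p'}$. Near $\zeta_0$, the condition $p>Q-1$ gives $(1+1/p-Q)p'>-Q$, so the singularity $d_*^{(1+1/p-Q)p'}$ is integrable; this is exactly the Sobolev threshold used in Proposition~\ref{prop:sobolev}(ii). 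At infinity, the exponential decay of $K$ for $\lambda>0$ beats the polynomial growth of $\rho_{\zeta_0}^{-p'/p}$. Left invariance of $d_*$ makes this integral independent of $\zeta_0$. Finally, invertibility under \eqref{eq:weighted-smallness} follows from the Neumann series, with inverse norm at most $(1-\Theta^\rho_{p,\lambda})^{-1}$.

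\textbf{Main obstacle.} The step I expect to be hardest is the weighted kernel bound: the $\lambda$-uniform absorption of $(1+d_*)^{\kappa/p}$. This requires re-running the heat-kernel integration of Proposition~\ref{prop:kernels} with the anisotropic Gaussian, tracking where $d_*(\zeta,\xi)$ is large relative to $\tau^{1/2}$. I would first try the crude bound $(1+d_*)^{m}\le C(1+\tau^{m/2})(1+(d_*/\tau^{1/2})^m)$. The first factor is then absorbed by $e^{-\lambda\tau/2}$, and the second by shrinking the Gaussian constant.
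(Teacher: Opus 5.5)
Your proposal is correct and follows essentially the same route as the paper: you conjugate by $\rho_{\zeta_0}^{1/p}$, use a kernel majorization uniform in $\lambda\ge1$ that absorbs $(1+d_*)^{\kappa/p}$, apply the Adams inequality to $b_{\mathrm s}$ and a Schur test with moment-weighted kernel $L^1$ bounds to $b_{\mathrm b}$, treat $\widehat Q_p$ by duality at exponent $p'$, obtain point evaluation from weighted H\"older using $p>Q-1$, and invert by a Neumann series. Your splitting $(1+d_*)^m\le C(1+\tau^{m/2})(1+(d_*/\sqrt\tau)^m)$ is a valid, somewhat cleaner repackaging of the paper's case analysis ($\tau\ge cr^2$ versus $\Phi_\tau\ge cr^2/\tau$) for the off-diagonal decay.
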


\begin{proof}
Put
\[
 M:=\norm{b_{\mathrm s}}_{\M^q_{\K}},
 \qquad B:=\norm{b_{\mathrm b}}_\infty.
\]
All constants  in the estimates below are independent of the center $\zeta_0$.

\smallskip
\noindent\emph{Step 1: Off-diagonal kernel bounds.}
Let $K_\gamma^*$ be the kernel of $(\A^*)^{-\gamma/2}$ (see \eqref{eq:fractional-kernel-formula} in Appendix \ref{prop:kernels_proof}) and let
$D_\gamma^*$ be the kernel of
$\nabla_v(\A^*)^{-(1+\gamma)/2}$.  The explicit formula used in the proof of
Proposition \ref{prop:kernels} (see again Appendix \ref{prop:kernels_proof}) shows, on the time half-space on which either kernel is
nonzero, that
\begin{equation}\label{eq:weighted-gaussian-kernels}
 \abs{K_\gamma^*(\zeta,\xi)}
 +\abs{D_\gamma^*(\zeta,\xi)}
 \le C_{d,\gamma}\tau^{(\gamma-Q)/2}
       e^{-\lambda\tau}e^{-c_d\Phi_\tau(\zeta,\xi)}.
\end{equation}
Here $\tau$ is the absolute time difference and $\Phi_\tau$ is the sum of
the two normalized quadratic expressions appearing in
\eqref{eq:gaussian-upper}.  Notice that the powers of $\tau$ are the same
for the two kernels: differentiation contributes $\tau^{-1/2}$, while the
fractional power in $D_\gamma^*$ is higher by one.

Set $r=d_*(\zeta,\xi)$.  By the equivalence of $d$ and $d_*$, either
$\tau\ge c r^2$, or the Gaussian exponent satisfies
$\Phi_\tau(\zeta,\xi)\ge c r^2/\tau$.  In the first case the factor
$e^{-\lambda\tau}$ is bounded by $e^{-c r^2}$ because $\lambda\ge1$.  In
the second case,
\[
 e^{-\lambda\tau-c r^2/\tau}\le e^{-c' r}
\]
by the arithmetic--geometric mean inequality.  It follows from
\eqref{eq:weighted-gaussian-kernels} that, for every $L>0$,
\begin{equation}\label{eq:weighted-off-diagonal}
 \abs{K_\gamma^*(\zeta,\xi)}
 +\abs{D_\gamma^*(\zeta,\xi)}
 \le C_{d,\gamma,L}
 r^{\gamma-Q}(1+r)^{-L},
 \qquad r\ge1.
\end{equation}
Together with the near-diagonal estimates of Proposition \ref{prop:kernels}, this
implies that, for every $\theta\ge0$ and $0<\gamma<Q$,
\begin{equation}\label{eq:weighted-kernel-majorization}
 \bigl(1+d_*(\zeta,\xi)\bigr)^\theta
 \bigl(\abs{K_\gamma^*(\zeta,\xi)}
       +\abs{D_\gamma^*(\zeta,\xi)}\bigr)
 \le C_{d,\gamma,\theta}
       d_*(\zeta,\xi)^{\gamma-Q}.
\end{equation}
Indeed, the polynomial factor is bounded near the diagonal and is absorbed
by the arbitrary power $(1+r)^{-L}$ in
\eqref{eq:weighted-off-diagonal} away from it.  The same statements hold
for the non-adjoint kernels.  We will use
\eqref{eq:weighted-kernel-majorization} with
$\gamma=1/p$ and $1/p'$; the off-diagonal form
\eqref{eq:weighted-off-diagonal} with $\gamma=1+1/p$ will be used for point
evaluation.

The constants in \eqref{eq:weighted-off-diagonal} and
\eqref{eq:weighted-kernel-majorization} can be chosen uniformly for
$\lambda\ge1$: for these bounds one uses $e^{-\lambda\tau}\le e^{-\tau}$
throughout.  This fixes the singular-part constants independently of
$\lambda$.  We will also retain the $\lambda$-dependence needed for the
bounded component.  Upon writing
\[
\mathcal E_\gamma^*(\zeta,\xi)
:=\abs{K_\gamma^*(\zeta,\xi)}
+\abs{D_\gamma^*(\zeta,\xi)},
\]
Gaussian integration and the kinetic scaling give, for every
$\theta\ge0$ and $0<\gamma<Q$, 
\begin{equation}\label{eq:weighted-kernel-L1}
 \begin{split}
 &\sup_\zeta\int
  (1+d_*(\zeta,\xi))^\theta
  \mathcal E_\gamma^*(\zeta,\xi)\,d\xi\\
 &\quad+\sup_\xi\int
  (1+d_*(\zeta,\xi))^\theta
  \mathcal E_\gamma^*(\zeta,\xi)\,d\zeta
 \le C_{d,\gamma,\theta}
       \bigl(\lambda^{-\gamma/2}
             +\lambda^{-(\gamma+\theta)/2}\bigr)
 \le C_{d,\gamma,\theta}\lambda^{-\gamma/2}.
 \end{split}
\end{equation}
Indeed, after integration of the normalized Gaussian in the space
variables, the terms without and with the distance moment are bounded by
constant multiples of
\[
 \int_0^\infty \tau^{\gamma/2-1}e^{-\lambda\tau}\,d\tau
 \quad\hbox{and}\quad
 \int_0^\infty \tau^{(\gamma+\theta)/2-1}
          e^{-\lambda\tau}\,d\tau,
\]
respectively.  The same computation applies with the variables reversed.

\smallskip
\noindent\emph{Step 2: Conjugation by the weight.}
Put
\[
 \omega_{\zeta_0}:=\rho_{\zeta_0}^{1/p},
 \qquad
 (U_{\zeta_0}f)(\zeta):=\omega_{\zeta_0}(\zeta)f(\zeta).
\]
Then
\[
 U_{\zeta_0}:L^p_{\rho_{\zeta_0}}\rightarrow L^p
\]
is an isometric isomorphism.  Thus the weighted operator norm of an
operator \(T\) is exactly the ordinary \(L^p\)-operator norm of
\(U_{\zeta_0}TU_{\zeta_0}^{-1}\).  From
\eqref{eq:weight-moderate},
\begin{equation}\label{eq:omega-ratio}
 \frac{\omega_{\zeta_0}(\zeta)}
      {\omega_{\zeta_0}(\xi)}
 \le C_{d,\kappa,p}
       \bigl(1+d_*(\zeta,\xi)\bigr)^{\kappa/p},
\end{equation}
uniformly in \(\zeta_0\).

We first treat \(\widehat R_p(b)\).  Since
\[
 \abs b^{1/p}\le\abs{b_{\mathrm s}}^{1/p}+B^{1/p},
\]
its conjugated kernel is bounded by the sum of a singular part and a bounded
part.  For the singular part, \eqref{eq:omega-ratio} and
\eqref{eq:weighted-kernel-majorization}, with
\(\theta=\kappa/p\) and \(\gamma=1/p\), give
\begin{equation*}
 \abs{U_{\zeta_0}\widehat R_p(b)U_{\zeta_0}^{-1}h}
 \le C\abs{b_{\mathrm s}}^{1/p}I_{1/p}(\abs h)
     +B^{1/p}\mathcal D_{p,\lambda}^{\rho}(\abs h),
\end{equation*}
where \(\mathcal D_{p,\lambda}^{\rho}\) is the positive integral operator
with kernel
\[
 C(1+d_*(\zeta,\xi))^{\kappa/p}
       \abs{D_{1/p}^*(\zeta,\xi)}.
\]
The Adams estimate \eqref{eq:adams-operator} controls the first term, while
\eqref{eq:weighted-kernel-L1} and the Schur test give
\[
 \norm{\mathcal D_{p,\lambda}^{\rho}}_{p\rightarrow p}
 \le C_{d,p,\kappa}\lambda^{-1/(2p)}.
\]
Hence
\[
 \norm{U_{\zeta_0}\widehat R_p(b)U_{\zeta_0}^{-1}}_{p\rightarrow p}
 \le C M^{1/p}
      +C\lambda^{-1/(2p)}B^{1/p}
 =\mathfrak r_{p,\lambda}^{\rho}(b),
\]
after fixing the constants in \eqref{eq:weighted-r-bound}.  This is
\eqref{eq:weighted-R}.

For \(\widehat Q_p(b)\), use
\[
 \abs b^{1/p'}
 \le\abs{b_{\mathrm s}}^{1/p'}+B^{1/p'}.
\]
The singular part of the conjugated kernel satisfies
\begin{equation*}
 \abs{U_{\zeta_0}\widehat Q_p(b)U_{\zeta_0}^{-1}h}
 \le CI_{1/p'}
       \bigl(\abs{b_{\mathrm s}}^{1/p'}\abs h\bigr)
     +B^{1/p'}\mathcal K_{p,\lambda}^{\rho}(\abs h),
\end{equation*}
where \(\mathcal K_{p,\lambda}^{\rho}\) has kernel
\[
 C(1+d_*(\zeta,\xi))^{\kappa/p}
       \abs{K_{1/p'}^*(\zeta,\xi)}.
\]
Since \(I_{1/p'}\) has a symmetric kernel, duality and
\eqref{eq:adams-operator}, used with exponent \(p'\), yield
\[
 \begin{split}
 \norm{I_{1/p'}
       (\abs{b_{\mathrm s}}^{1/p'}\abs h)}_p
 &=\sup_{\norm\varphi_{p'}\le1}
   \int\abs{b_{\mathrm s}}^{1/p'}\abs h\,
        I_{1/p'}(\abs\varphi)\\
 &\le\norm h_p\sup_{\norm\varphi_{p'}\le1}
       \norm{\abs{b_{\mathrm s}}^{1/p'}
              I_{1/p'}(\abs\varphi)}_{p'}\\
 &\le C M^{1/p'}\norm h_p.
 \end{split}
\]
The Schur test and \eqref{eq:weighted-kernel-L1}, now with
\(\gamma=1/p'\), give
\[
 \norm{\mathcal K_{p,\lambda}^{\rho}}_{p\rightarrow p}
 \le C_{d,p,\kappa}\lambda^{-1/(2p')}.
\]
It follows that
\[
 \norm{U_{\zeta_0}\widehat Q_p(b)U_{\zeta_0}^{-1}}_{p\rightarrow p}
 \le C M^{1/p'}
      +C\lambda^{-1/(2p')}B^{1/p'}
 =\mathfrak q_{p,\lambda}^{\rho}(b),
\]
which proves \eqref{eq:weighted-Q}.  Initially the calculation is made on
compactly supported functions.  Since \(C_c^\infty\) is dense in
\(L^p_{\rho_{\zeta_0}}\), the two conjugated estimates give the asserted
extensions.

\smallskip
\noindent\emph{Step 3: Evaluation at the center and inversion.}
By \eqref{eq:fractional-kernel-bound}, the kernel of
\((\A^*)^{-1/2-1/(2p)}\) is bounded near the diagonal by
\(Cd_*(\zeta,\xi)^{1+1/p-Q}\).  Its \(p'\)-th power is locally integrable if
and only if
\begin{equation}\label{eq:local-kernel-integrability}
 \left(1+\frac1p-Q\right)p'+Q>0,
 \qquad\text{equivalently}\qquad
 1+\frac1p>\frac Qp.
\end{equation}
Condition \eqref{eq:local-kernel-integrability} is precisely \(p>Q-1\).

Let \(G^*(\zeta_0,\xi)\) be the kernel of
\((\A^*)^{-1/2-1/(2p)}\).  H\"older's inequality with the weight gives
\begin{equation*}
 \abs{(\A^*)^{-1/2-1/(2p)}f(\zeta_0)}
 \le\norm f_{L^p_{\rho_{\zeta_0}}}
 \left(\int\abs{G^*(\zeta_0,\xi)}^{p'}
       \rho_{\zeta_0}(\xi)^{-p'/p}\,d\xi\right)^{1/p'}.
\end{equation*}
On $B_1(\zeta_0)$ the last integral is finite by
\eqref{eq:local-kernel-integrability}.  On its complement,
\eqref{eq:weighted-off-diagonal} and
$\rho_{\zeta_0}^{-p'/p}\le(1+d_*(\zeta_0,\cdot))^{\kappa p'/p}$
make it finite after taking $L$ sufficiently large.  This proves
\eqref{eq:weighted-point-evaluation}.

Finally, $L^p_{\rho_{\zeta_0}}$ is a Banach space, and
\eqref{eq:weighted-R}--\eqref{eq:weighted-Q} imply
\[
 \norm{\widehat Q_p(b)\widehat R_p(b)}_{
       L^p_{\rho_{\zeta_0}}\rightarrow L^p_{\rho_{\zeta_0}}}
 \le\mathfrak q_{p,\lambda}^{\rho}(b)
       \mathfrak r_{p,\lambda}^{\rho}(b)
 =\Theta_{p,\lambda}^{\rho}(b)<1.
\]
Therefore, the series
\[
 \bigl(I+\widehat Q_p(b)\widehat R_p(b)\bigr)^{-1}
 =\sum_{n=0}^\infty
   \bigl(-\widehat Q_p(b)\widehat R_p(b)\bigr)^n
\]
converges in operator norm on $L^p_{\rho_{\zeta_0}}$, uniformly in the
center.  This completes the proof.
\end{proof}

The weighted space does not impose an additional assumption on the drift.

In the next lemma the choice $\kappa>Q$ in the definition of the weight is used in an essential manner.

\begin{lemma}[Centered weighted moments of the full drift]
\label{lem:weighted-morrey-integrability}
Let $b=b_{\mathrm s}+b_{\mathrm b}$ satisfy
\eqref{eq:HK1-local}, 
$0<a\le q$, and let $\kappa>Q$.  Then
\begin{equation*}
 \sup_{\zeta_0\in\R^{1+2d}}
 \int_{\R^{1+2d}}\abs{b(\zeta)}^a
       \rho_{\zeta_0}(\zeta)\,d\zeta
 \le C_{d,q,a,\kappa}
       \left(\norm{b_{\mathrm s}}_{\M^q_{\K}}^a
             +\norm{b_{\mathrm b}}_\infty^a\right).
\end{equation*}
\end{lemma}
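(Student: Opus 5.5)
We reduce to the two components separately and then decompose space--time into dyadic annuli around the center $\zeta_0$. Since $\abs b^a\le 2^{a}\bigl(\abs{b_{\mathrm s}}^a+\abs{b_{\mathrm b}}^a\bigr)$ for $a>0$, and $\abs{b_{\mathrm b}}^a\le\norm{b_{\mathrm b}}_\infty^a$, the bounded part contributes at most $\norm{b_{\mathrm b}}_\infty^a\sup_{\zeta_0}\int\rho_{\zeta_0}$. This is finite and uniform in $\zeta_0$ by \eqref{eq:weight-integrable}, which is where $\kappa>Q$ first enters. It therefore remains to bound $\int\abs{b_{\mathrm s}}^a\rho_{\zeta_0}$ by $C\norm{b_{\mathrm s}}_{\M^q_{\K}}^a$.

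For the singular part we split $\R^{1+2d}=B_1(\zeta_0)\cup\bigcup_{j\ge1}\bigl(B_{2^j}(\zeta_0)\setminus B_{2^{j-1}}(\zeta_0)\bigr)$. On the $j$-th annulus $\rho_{\zeta_0}\le(1+2^{j-1})^{-\kappa}\lesssim 2^{-j\kappa}$, and on $B_1(\zeta_0)$ we have $\rho_{\zeta_0}\le1$. On each ball $B_R(\zeta_0)$, H\"older's inequality with exponent $q/a\ge1$ (this is where $a\le q$ is used) combined with the definition of the Morrey norm (Definition \ref{def:morrey}) gives
\[
 \int_{B_R(\zeta_0)}\abs{b_{\mathrm s}}^a
 \le\abs{B_R}^{1-a/q}\Bigl(\int_{B_R(\zeta_0)}\abs{b_{\mathrm s}}^q\Bigr)^{a/q}
 \le\abs{B_R}\,R^{-a}\norm{b_{\mathrm s}}_{\M^q_{\K}}^a
 \le C_d R^{Q-a}\norm{b_{\mathrm s}}_{\M^q_{\K}}^a,
\]
using \eqref{eq:ball-volume}. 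Summing these bounds over the decomposition yields
\[
 \int\abs{b_{\mathrm s}}^a\rho_{\zeta_0}\le C\norm{b_{\mathrm s}}_{\M^q_{\K}}^a\Bigl(1+\sum_{j\ge1}2^{-j\kappa}2^{j(Q-a)}\Bigr).
\]
The series converges because $\kappa>Q>Q-a$. All constants depend only on $d,q,a,\kappa$, and in particular not on $\zeta_0$, since the ball-volume bounds and the Morrey supremum are uniform in the center.

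The only point requiring care is the local integrability on the unit ball. For $a\le q$ this follows from the same H\"older step with $R=1$, so no singular behaviour causes trouble there. The main check is therefore just that the exponent bookkeeping $Q-a-\kappa<0$ holds, which is guaranteed by $\kappa>Q$. When $a<1$, the elementary inequality for the sum of the two components still holds with the constant $2^a$ replaced by $1$, so the argument applies unchanged.
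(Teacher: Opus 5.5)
Your proof is correct and follows essentially the same route as the paper: H\"older's inequality with exponent $q/a$ combined with the Morrey definition on $d_*$-balls gives $\int_{B_R(\zeta_0)}\abs{b_{\mathrm s}}^a\le C\norm{b_{\mathrm s}}_{\M^q_{\K}}^aR^{Q-a}$, this is summed over dyadic annuli around $\zeta_0$ against the decay of $\rho_{\zeta_0}$, and the bounded part is handled by the uniform integrability of the weight. Your side remark is also accurate: the singular part only needs $\kappa>Q-a$, and the full assumption $\kappa>Q$ is used only for the bounded component.
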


\begin{proof}
Put $M=\norm{b_{\mathrm s}}_{\M^q_{\K}}$ and
$B=\norm{b_{\mathrm b}}_\infty$.  From the definition of the Morrey norm and
\eqref{eq:ball-volume},
\[
 \int_{B_R(\zeta_0)}\abs{b_{\mathrm s}}^q
 \le C_dM^qR^{Q-q}.
\]
Hence H\"older's inequality gives, for every $R>0$,
\begin{equation}\label{eq:morrey-ball-lower-power}
 \int_{B_R(\zeta_0)}\abs{b_{\mathrm s}}^a
 \le |B_R(\zeta_0)|^{1-a/q}
      \left(\int_{B_R(\zeta_0)}\abs{b_{\mathrm s}}^q\right)^{a/q}
 \le C_{d,q,a}M^aR^{Q-a}.
\end{equation}
Now set $A_0=B_1(\zeta_0)$ and
\[
 A_{k+1}=B_{2^{k+1}}(\zeta_0)\setminus B_{2^k}(\zeta_0),
 \qquad k\ge0.
\]
On $A_{k+1}$ one has $\rho_{\zeta_0}\le2^{-k\kappa}$, and therefore
\eqref{eq:morrey-ball-lower-power} with $R=2^{k+1}$ yields
\[
 \int_{A_{k+1}}\abs{b_{\mathrm s}}^a\rho_{\zeta_0}
 \le C M^a2^{-k\kappa}2^{k(Q-a)}
 =CM^a2^{-k(\kappa-Q+a)}.
\]
The contribution of $A_0$ is bounded by
\eqref{eq:morrey-ball-lower-power} with $R=1$, and the displayed geometric
series converges because $\kappa>Q$.  Thus
\[
 \sup_{\zeta_0}\int\abs{b_{\mathrm s}}^a\rho_{\zeta_0}
 \le CM^a.
\]
Moreover, \eqref{eq:weight-integrable} gives
\[
 \sup_{\zeta_0}\int\abs{b_{\mathrm b}}^a\rho_{\zeta_0}
 \le C B^a.
\]
The result follows from
$\abs{b_{\mathrm s}+b_{\mathrm b}}^a
 \le C_a(\abs{b_{\mathrm s}}^a+\abs{b_{\mathrm b}}^a)$.
\end{proof}

We call $p$ \textit{SDE admissible} if it is backward
admissible in the sense of \eqref{eq:backward-smallness} and also satisfies
$\Theta_{p,\lambda}^{\rho}(b)<1$ in
\eqref{eq:weighted-smallness}.  By
\eqref{eq:weighted-r-bound} and \eqref{eq:weighted-q-bound}, this condition
holds after increasing $\lambda$, provided that the Morrey norm of
$b_{\mathrm s}$ satisfies the corresponding strict smallness condition.
No smallness is imposed on $\norm{b_{\mathrm b}}_\infty$.

\begin{lemma}[Higher-power Krylov estimate]
\label{lem:higher-power-krylov}
Let $p>Q-1$ be SDE admissible and choose
\begin{equation*}
 0<\varepsilon<\min\left\{1,\frac{q-1}{p}\right\}.
\end{equation*}
Let $b_n=b_{n,\mathrm s}+b_{n,\mathrm b}$ be the 
regularization from
Proposition \ref{lem:simultaneous-regularization}, chosen with a uniform strict bound in
\eqref{eq:weighted-smallness}.  Then
\begin{equation*}
 \sup_n\sup_{0\le s<T}\sup_{z\in\R^{2d}}
 \mathbb E^n_{s,z}\int_s^T
       \abs{b_n(r,Z^n_r)}^{1+\varepsilon}\,dr
 \le C_T<\infty.
\end{equation*}
\end{lemma}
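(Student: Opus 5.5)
The plan is to repeat the proof of Lemma~\ref{lem:global-power-krylov}, working in the centered weighted space $L^p_{\rho_{\zeta_0}}$ with $\zeta_0=(s,z)$ instead of $L^p$. The global $L^{1+\varepsilon_0}$ bound on $b_n$ is then replaced by Lemma~\ref{lem:weighted-morrey-integrability}. Fix $(s,z)$ and $n$. For $R>0$ put $h_R=(\abs{b_n}\wedge R)^{\varepsilon}$ and $F_R=\abs{b_n}h_R\mathbf 1_{[s,T]}$, and let $u$ solve
\[
\A^*u+b_n\cdot\nabla_v u=F_R .
\]
Since $b_n$ is smooth and bounded, $u$ is smooth and bounded. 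It\^o's formula applied to $e^{-\lambda(t-s)}u(t,Z^n_t)$, exactly as in the proof of Lemma~\ref{lem:kinetic-krylov}, gives
\[
\mathbb E^n_{s,z}\int_s^T\abs{b_n}^{1+\varepsilon}\wedge(\cdots)\,dr\le e^{\lambda T}\bigl(\abs{u(s,z)}+\norm{u}_\infty\bigr).
\]
We therefore need to bound $u(\zeta_0)$ uniformly in $\zeta_0$ and $n$. The term $\norm u_\infty$ is handled by writing $u(T,\cdot)$ in the same way at the other centers; alternatively one can use the terminal-value form \eqref{eq:terminal-approximation}, which removes this term altogether.

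The representation \eqref{eq:adjoint-backward-representation} gives
\[
u(\zeta_0)=(\A^*)^{-1/2-1/(2p)}(I+\widehat Q_p\widehat R_p)^{-1}(\A^*)^{-1/(2p')}F_R\,(\zeta_0).
\]
We estimate this in three steps using Lemma~\ref{lem:weighted-backward}:
\begin{enumerate}[label=(\alph*)]
\item point evaluation \eqref{eq:weighted-point-evaluation} at the center;
\item the uniform inverse bound on $L^p_{\rho_{\zeta_0}}$, which follows from the strict bound $\sup_n\Theta^{\rho}_{p,\lambda}(b_n)<1$;
\item a weighted bound for $(\A^*)^{-1/(2p')}F_R$.
\end{enumerate}
This reduces everything to
\[
\norm{(\A^*)^{-1/(2p')}F_R}_{L^p_{\rho_{\zeta_0}}}\le C.
\]
For (c) we write $F_R=\abs{b_n}^{1/p'}\cdot\abs{b_n}^{1/p}h_R$ and reuse the estimate of $\widehat Q_p$ in Step~2 of Lemma~\ref{lem:weighted-backward}. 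We conjugate by $\omega_{\zeta_0}$ and majorize the kernel using \eqref{eq:weighted-kernel-majorization}. The singular part is then controlled by $I_{1/p'}(\abs{b_{n,\mathrm s}}^{1/p'}\,\cdot)$ via duality and the Adams estimate, and the bounded part by Schur's test. This gives
\[
\norm{(\A^*)^{-1/(2p')}F_R}_{L^p_{\rho_{\zeta_0}}}\le \mathfrak q^\rho_{p,\lambda}(b_n)\,\norm{\abs{b_n}^{1/p}h_R}_{L^p_{\rho_{\zeta_0}}}.
\]
The last factor equals $\bigl(\int\abs{b_n}^{1+\varepsilon p}\rho_{\zeta_0}\bigr)^{1/p}$ up to the truncation, and we bound it by Lemma~\ref{lem:weighted-morrey-integrability} with $a=1+\varepsilon p$. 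The hypothesis $\varepsilon<(q-1)/p$ is exactly $a<q$, which is what that lemma requires. Since Proposition~\ref{lem:simultaneous-regularization} keeps $\norm{b_{n,\mathrm s}}_{\M^q_\K}$ bounded and $\norm{b_{n,\mathrm b}}_\infty\le\norm{b_{\mathrm b}}_\infty$, the resulting bound is uniform in $n$ and in $\zeta_0$. Letting $R\to\infty$ by monotone convergence finishes the proof.

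The main obstacle is the uniformity in the center in step (c). The weight $\rho_{\zeta_0}$ must be transported through $(\A^*)^{-1/(2p')}$ by means of \eqref{eq:omega-ratio}, and only the rapid off-diagonal kernel decay, which needs $\lambda\ge1$, absorbs the polynomial factor $(1+d_*)^{\kappa/p}$. We also have to check that the weighted inverse genuinely coincides with the unweighted solution operator. For smooth bounded $b_n$ both Neumann series converge on $L^p\cap L^p_{\rho_{\zeta_0}}$, which contains compactly supported data, so the two agree there. If $F_R$ is not in that class, we first truncate it by spatial cutoffs and pass to the limit by Fatou's lemma.
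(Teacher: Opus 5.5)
Your proposal is correct and follows the paper's proof essentially step by step. You center the weight at $\zeta_0=(s,z)$ and route the source $\abs{b_n}^{1+\varepsilon}\mathbf 1_{[s,T]}=b_n^{1/p'}\cdot\ell_n$ through $\widehat Q_p(b_n)$ and \eqref{eq:weighted-Q}. You then bound $\norm{\ell_n}_{L^p_{\rho_{\zeta_0}}}$ by Lemma~\ref{lem:weighted-morrey-integrability} with $a=1+p\varepsilon<q$, invert and point-evaluate via Lemma~\ref{lem:weighted-backward}, and identify the weighted and global Neumann series, all as the paper does.

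The deviations are inessential. The truncation $h_R$ and the $\norm{u}_\infty$ term are superfluous, since each $b_n$ is bounded and compactly supported and backward causality gives $u(T,\cdot)=0$. Also, $u$ is not literally smooth because of the factor $\mathbf 1_{[s,T]}$, so It\^o's formula must be applied to smooth approximations of the source, as in Lemma~\ref{lem:kinetic-krylov}.
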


\begin{proof}
Fix $n,s,T,z$ and put $\zeta_0=(s,z)$.  Set
$F_n=\mathbf1_{[s,T]}\abs{b_n}^{1+\varepsilon}$ and define the vector fields
\[
 \ell_n:=\mathbf1_{[s,T]}\sgn(b_n)
              \abs{b_n}^{1/p+\varepsilon},
 \qquad
 \sgn(b_n):=\frac{b_n}{\abs{b_n}}\mathbf1_{\{b_n\ne0\}},
\]
so that $F_n=b^{\frac{1}{p'}}_n\cdot\ell_n$.  Since
$1+p\varepsilon<q$, Lemma \ref{lem:weighted-morrey-integrability}, with
$a=1+p\varepsilon$, gives
\begin{equation}\label{eq:ell-weighted}
 \norm{\ell_n}_{L^p_{\rho_{\zeta_0}}}^p
 =\int_{\{s\le t\le T\}}\abs{b_n}^{1+p\varepsilon}
       \rho_{\zeta_0}
 \le C_{d,p,q,\varepsilon,\kappa}
       \left(\norm{b_{n,\mathrm s}}_{\M^q_{\K}}^{1+p\varepsilon}
             +\norm{b_{n,\mathrm b}}_\infty^{1+p\varepsilon}\right)
 \le C.
\end{equation}
The centered weighted backward factorization is therefore well-defined and
gives
\begin{equation*}
 u_n=(\A^*)^{-1/2-1/(2p)}
 \bigl(I+\widehat Q_p(b_n)\widehat R_p(b_n)\bigr)^{-1}
 \widehat Q_p(b_n)\ell_n.
\end{equation*}
Because $b_n$ and $\ell_n$ are compactly supported, the same Duhamel
partial sums occur in the global $L^p$ factorization from Theorem
\ref{thm:backward}.  They converge both in global $L^p$ and in
$L^p_{\rho_{\zeta_0}}$. Since both convergences imply local $L^p$
convergence, the two limits coincide.  Thus $u_n$ is the usual smooth
resolvent solution.  By
\eqref{eq:weighted-Q}, \eqref{eq:weighted-point-evaluation},
\eqref{eq:weighted-smallness}, and \eqref{eq:ell-weighted},
\begin{equation}\label{eq:power-backward-point}
 \abs{u_n(s,z)}
 \le C\mathfrak q_{p,\lambda}^{\rho}(b_n)
       \norm{\ell_n}_{L^p_{\rho_{\zeta_0}}}
 \le C.
\end{equation}
The adjoint resolvent is backward causal, so $u_n$ vanishes for $t>T$.
Apply It\^o's formula to $u_n(t,Z^n_t)$, first with a smooth approximation
of $F_n$.  The resolvent parameter gives the ``discount'' factor
$e^{-\lambda(r-s)}$.  Positivity of $F_n$, \eqref{eq:power-backward-point},
and removal of the discount on $[s,T]$ yield
\[
 \mathbb E^n_{s,z}\int_s^T
       \abs{b_n(r,Z^n_r)}^{1+\varepsilon}\,dr
 \le e^{\lambda T}\abs{u_n(s,z)}\le C_T.
\]
The constant is uniform in $n,s,z$. Applying Fatou's lemma
ends the proof.
\end{proof}

We are now in a position to 
demonstrate Theorem \ref{thm:sde-local}.

\begin{proof}[Proof of Theorem \ref{thm:sde-local}]
Choose, simultaneously at \(p,p_1\)
satisfying $p>Q-1$, $p_1>pq/(q-1)$, and, when necessary, \(r\), the
single sequence
\[
 b_n=b_{n,\mathrm s}+b_{n,\mathrm b}
\]
from Proposition \ref{lem:simultaneous-regularization}. Both  approximating  components use the same
cutoff index and the same convolution scale.  In view of
\eqref{eq:local-regularization} and
\eqref{eq:weighted-r-bound}--\eqref{eq:weighted-q-bound}, the sequence may
be chosen so that
\begin{equation*}
 \sup_n\Theta_{p,\lambda}^{\rho}(b_n)<1.
\end{equation*}
Here one first fixes the strict smallness margin for
\(\norm{b_{\mathrm s}}_{\M^q_{\K}}\) and then takes \(\lambda\) large
enough to absorb the common \(L^\infty\) bound of
\(b_{n,\mathrm b}\).

Starting from the diffusion $Z^n=(X^n,V^n)$ with drift $b_n=b_{n,\mathrm s}+b_{n,\mathrm b}$, we can extend each main step in the preceding proof of Theorem \ref{thm:sde}.

\noindent
\emph{Tightness}. Apply Lemma \ref{lem:higher-power-krylov} with
\(\varepsilon=\varepsilon_*\) and its conclusion is exactly the control
\eqref{eq:uniform-power-drift}. Subsequently, the pathwise Hölder estimate
\eqref{eq:drift-holder-modulus}, the tail bounds
\eqref{eq:Bn-tightness-conditions}, and the control \eqref{eq:compact-containment} 
in Step~1 of Theorem \ref{thm:sde} apply verbatim.  Thus, again up to a subsequence $\mathbb P^n_{s,z}:=\mathbb P\circ(Z^n)^{-1}$ 
are tight and converge to a limit $\mathbb P_{s,z}$. 

\smallskip
\noindent\emph{Occupation measures, passage to a weak solution and Krylov bounds.}
The proof of the bounds \eqref{first-Krylov-bound} and \eqref{second-Krylov-bound} proceeds as in parts $1$ and $2$ in Step 2 of the proof of Theorem \ref{thm:sde}. Regarding the control $\mathbb E_{s,z}\int_s^T|b(r,Z_r)|^{1+\varepsilon_*}\,dr$, we verify explicitly how the two components $b_{\mathrm s}$, $b_{\mathrm b}$ enter the arguments of part 3 of Step~2.
Let \(\chi\in C_c^\infty(\R^{1+2d})\), \(0\le\chi\le1\),
be one on the space--time support of the derivative of a martingale test
function, and set
\[
 k_{n,\mathrm s}:=\chi(b_{n,\mathrm s}-b_{\mathrm s}),
 \qquad
 h_{n,\mathrm b}:=\chi(b_{n,\mathrm b}-b_{\mathrm b}).
\]
Then
\[
 \sup_n\norm{k_{n,\mathrm s}}_{\M^q_{\K}}<\infty,
 \qquad \norm{k_{n,\mathrm s}}_1\rightarrow0,
 \qquad \norm{h_{n,\mathrm b}}_p\rightarrow0.
\]
The first two assertions follow from the Morrey bound and local
\(L^q\)-convergence; the last follows from the local \(L^p\)-convergence of
the bounded components.  Applying respectively the weighted and unweighted
Krylov estimates \eqref{eq:krylov-smooth} and \eqref{eq:krylov-one} gives
\begin{equation*}
 \begin{split}
 &\mathbb E^n_{s,z}\int_s^T
   \abs{b_n-b}(r,Z_r)
   \abs{\nabla_v\varphi(Z_r
   	)}\,dr\\
 &\quad\le C_{T,\varphi}\left(
   \norm{k_{n,\mathrm s}}_{\M^q_{\K}}^{1/p'}
       \norm{k_{n,\mathrm s}}_1^{1/p}
   +\norm{h_{n,\mathrm b}}_p\right)
 \rightarrow0.
 \end{split}
\end{equation*}
Thus the varying coefficient causes no new difficulty.

For the fixed coefficient, repeat the weighted occupation-density argument
of Step~2 with \(k=\chi b_{\mathrm s}\).  The bounded component is simpler:
the signed occupation densities are bounded in \(L^{p'}\) by
\eqref{eq:krylov-one}, while
\(\chi b_{\mathrm b}\cdot\nabla_v\varphi\in L^p\); hence ordinary
\(L^p\)-\(L^{p'}\) duality passes this term to the limit.  The same
decomposition proves, for every
\(\psi\in C_c(\R^{1+2d};\R^d)\),
\[
 \int b_n\cdot\psi\,d\mu_n\rightarrow
 \int b\cdot\psi\,d\mu.
\]
Consequently the argument
\eqref{eq:local-drift-duality-convergence},
\eqref{eq:power-lower-semicontinuity} applies to the full drift and gives
\[
 \mathbb E_{s,z}\int_s^T\abs{b(r,Z_r)}^{1+\varepsilon_*}\,dr
 \le C_T.
\]
Owing to this estimate, \eqref{eq:direct-coefficient-error} and all the arguments of  Step~2.4 hold and ensure that $\mathbb P_{s,z}$ is a solution to the martingale problem for $b$.

\smallskip
\noindent\emph{Conditional uniqueness.}
Only the error term \eqref{error-bound} in Step~3 of Theorem \ref{thm:sde} needs comments.
Let
\[
 \frac1a=\frac1p-\frac1{p_1};
\]
then \(a/p<q\).  On the compact cylinder determined by the stopping time,
write
\[
 \widetilde b_{n,\mathrm s}:=a_n b_{\mathrm s},
 \qquad
 b_{n,\mathrm s}=\rho_{\eps_n}*\widetilde b_{n,\mathrm s}.
\]
Here $a_n$ is the common scalar cutoff of
\eqref{eq:regularization-scalar-cutoff}.
On the fixed cylinder, $\chi_n=1$ for all sufficiently large $n$, and
\[
 |b_{\mathrm s}-\widetilde b_{n,\mathrm s}|
 \le |b_{\mathrm s}|\mathbf1_{\{|b|>n\}}
 \le (|b|+\|b_{\mathrm b}\|_\infty)\mathbf1_{\{|b|>n\}}.
\]
Set $E_n=\{|b|>n\}$ on this cylinder.  These sets decrease to a null set.
The two conditional Krylov bounds in Theorem \ref{thm:sde}(iii) give
\begin{equation}\label{eq:singular-tail-with-bounded-part}
 \begin{split}
 &\mathbb E^{\mathbb Q}_{s,z}\int_s^T
   \abs{b_{\mathrm s}-\widetilde b_{n,\mathrm s}}(r,Z_r)
       \abs{\nabla_vu_n(r,Z_r)}\,dr\\
 &\quad\le C_{\mathbb Q}
  \left(\norm{\abs b^{1/p}\mathbf1_{E_n}\nabla_vu_n}_p
  +\norm{b_{\mathrm b}}_\infty
       \norm{\mathbf1_{E_n}\nabla_vu_n}_p\right)\\
 &\quad\le C_{\mathbb Q}\norm{\nabla_vu_n}_{p_1}
  \left(\norm{\abs b^{1/p}\mathbf1_{E_n}}_a
  +\norm{b_{\mathrm b}}_\infty
       \norm{\mathbf1_{E_n}}_a\right)\rightarrow0.
 \end{split}
\end{equation}
All norms in this display are restricted to the fixed cylinder.  The last
limit follows from \(a/p<q\),
\(b=b_{\mathrm s}+b_{\mathrm b}\in L^{a/p}_{\mathrm{loc}}\), and absolute
continuity of the integral.

The single scale \(\eps_n\) in
Proposition \ref{lem:simultaneous-regularization} can simultaneously be chosen so that
\[
 \norm{\widetilde b_{n,\mathrm s}
       -\rho_{\eps_n}*\widetilde b_{n,\mathrm s}}_a\le2^{-n}
\]
on the \(n\)th compact cylinder, without changing any factor convergence.
The remaining singular error is therefore controlled by the unweighted
conditional Krylov estimate and the uniform \(L^{p_1}\)-bound on
\(\nabla_vu_n\).  For the bounded component the same estimate gives,
on every fixed cylinder,
\begin{equation}\label{eq:bounded-component-uniqueness-error}
 \begin{split}
 &\mathbb E^{\mathbb Q}_{s,z}\int_s^T
   \abs{b_{n,\mathrm b}-b_{\mathrm b}}(r,Z_r)
       \abs{\nabla_vu_n(r,Z_r)}\,dr\\
 &\qquad\le C_{\mathbb Q}
   \norm{b_{n,\mathrm b}-b_{\mathrm b}}_a
   \norm{\nabla_vu_n}_{p_1}\rightarrow0
 \end{split}
\end{equation}
by \eqref{eq:local-regularization}. 
It follows from \eqref{eq:singular-tail-with-bounded-part} and
\eqref{eq:bounded-component-uniqueness-error} that the error term 
 tends to zero.  The same estimates hold after
multiplication by any bounded $\mathcal F_{t_0}$-measurable factor $G$,
with the right-hand sides multiplied by $\|G\|_\infty$.  Consequently
they prove the representation \eqref{eq:conditional-occupation-duality}, and by extension uniqueness of the martingale problem for the full drift $b=b_{\mathrm s}+b_{\mathrm b}$.

\noindent
\emph{The Feller and Markov properties}. Finally, the Feller and strong Markov arguments use only the uniform
higher-power estimate for the full drift, and the density identification is
Theorem \ref{thm:main}(ii)--(iii) for the decomposition
\(b=b_{\mathrm s}+b_{\mathrm b}\).  Steps~4 and 5 of
Theorem \ref{thm:sde} therefore apply without further changes and complete the
proof.
\end{proof}

\appendix

\bigskip

\section{Fractional powers of linear operators}\label{app:fractional-powers}

We recall the construction of the fractional powers of operators that we use throughout the paper. 

We follow Komatsu \cite[Sections~7 and~11]{Komatsu1966}.
Specifically, see Propositions~11.1 and~11.4 there, and
Theorem~11.8 for the definition of positive fractional powers via Bochner-Phillips subordination. (Regarding the definition via subordination, see also
the exposition in Yosida~\cite[Chapter~IX, \S 11]{Yosida}.)

\textbf{1.~}For $\lambda>0$ and $s\in\R$, recall that the Fourier transform definition of the fractional power of the (shifted) Laplacian on $L^2=L^2(\R^d)$ is
\begin{equation}\label{eq:app-fp-fourier}
	\begin{split}
		\mathscr F\bigl[(\lambda-\Delta)^sf\bigr](\xi)
		&=(\lambda+|\xi|^2)^s\widehat f(\xi),\\
		D((\lambda-\Delta)^s)
		&=\{f\in L^2:(\lambda+|\xi|^2)^s\widehat f\in L^2\},
	\end{split}
\end{equation}
where $\mathscr F$ is the unitary Fourier transform normalized by
$\mathscr F(-\Delta f)(\xi)=|\xi|^2\widehat f(\xi)$.

\textbf{2.~}The semigroup formulation below also works in $L^p$, and agrees with the Fourier transform definition.

Let $1<p<\infty$, and let $-B$ generate a strongly continuous semigroup
$T(t)$ on $L^p=L^p(E
,\mu)$, where $(E
,\mu)$ is a fixed
$\sigma$-finite measure space, such that
$\|T(t)\|_{p\to p}\le M e^{-\omega t}$ for some $M\ge1$ and $\omega>0$.
For every $\beta>0$, the negative (fractional) power $B^{-\beta}$ of $B$ is the bounded operator
\begin{equation}\label{eq:app-fp-negative}
	B^{-\beta}f:=\frac1{\Gamma(\beta)}
	\int_0^\infty t^{\beta-1}T(t)f\,dt,
	\qquad \|B^{-\beta}\|_{p\to p}\le M\omega^{-\beta},
\end{equation}
where the integral converges in $L^p$. Using the identity
\begin{equation}\label{eq:app-fp-beta-integral}
	\int_0^1 u^{\beta-1}(1-u)^{\delta-1}\,du
	=\frac{\Gamma(\beta)\Gamma(\delta)}{\Gamma(\beta+\delta)},
	\qquad \beta,\delta>0,
\end{equation}
the semigroup property $T(t)T(s)=T(t+s)$, Fubini's theorem and the change of variables
$r=t+s$, $u=t/(t+s)$, we obtain
$B^{-\beta}B^{-\delta}=B^{-(\beta+\delta)}$.

\begin{remark}[Injectivity and dense range]\label{rem:app-fp-injectivity}
	The Laplace-resolvent formula identifies $B^{-1}$ with the inverse of $B$.
	To see that $B^{-\beta}$ is injective, choose an integer $m>\beta$.
	Then $B^{-\beta}f=0$ implies
	$B^{-m}f=B^{-(m-\beta)}B^{-\beta}f=0$, hence $f=0$, since
	$B^{-m}=(B^{-1})^m$ is injective. Moreover,
	$\operatorname{Ran}(B^{-\beta})$ contains
	$\operatorname{Ran}(B^{-m})=D(B^m)$, which is dense by the standard
	resolvent approximation.
\end{remark}

We therefore define the positive power as the closed operator
$B^\beta=(B^{-\beta})^{-1}$, with
$D(B^\beta)=\operatorname{Ran}(B^{-\beta})$.
Thus $B^\beta B^{-\beta}=I$ on $L^p$ and
$B^{-\beta}B^\beta=I$ on $D(B^\beta)$.
For $0<\alpha<1$, the scalar identity
\begin{equation}\label{eq:app-fp-scalar-positive}
	r^\alpha=\frac{\alpha}{\Gamma(1-\alpha)}
	\int_0^\infty(1-e^{-tr})\,\frac{dt}{t^{1+\alpha}},
	\qquad r\ge0,
\end{equation}
motivates the corresponding semigroup representation: $B^\alpha$ is
equivalently the closure of
\begin{equation}\label{eq:app-fp-positive}
	B^\alpha f=\frac{\alpha}{\Gamma(1-\alpha)}
	\int_0^\infty\bigl(f-T(t)f\bigr)\,\frac{dt}{t^{1+\alpha}},
	\qquad f\in D(B).
\end{equation}
In particular, $D(B)$ is a core for $B^\alpha$. The positive powers satisfy
$B^\beta B^\delta=B^{\beta+\delta}$ on the natural composition domain.

B\"ochner subordination further shows that $-B^\alpha$, $0<\alpha<1$,
generates a strongly continuous semigroup and represents it as an average
of $T$. Namely, let $\eta_{\alpha,t}$ be the probability measures on
$[0,\infty)$ characterized by
$\int_{[0,\infty)}e^{-\rho r}\eta_{\alpha,t}(dr)=e^{-t\rho^\alpha}$,
$\rho\ge0$. Bochner subordination gives
\begin{equation}\label{eq:app-fp-subordination}
	e^{-tB^\alpha}f
	=\int_{[0,\infty)}T(r)f\,\eta_{\alpha,t}(dr),
	\qquad
	\|e^{-tB^\alpha}\|_{p\to p}\le M e^{-t\omega^\alpha}.
\end{equation}
The representation also preserves positivity whenever $T$ is positive.

If $B^*$ is the Banach
adjoint on $L^{p'}$, $p'=p/(p-1)$, its powers satisfy
$(B^{-\beta})^*=(B^*)^{-\beta}$ and
$(B^\beta)^*=(B^*)^\beta$.
Semigroups $T_p$ and $T_q$ on $L^p(\Omega,\mu)$ and $L^q(\Omega,\mu)$
are called \emph{consistent} if
$T_p(t)f=T_q(t)f$ almost everywhere for every $t\ge0$ and
$f\in L^p\cap L^q$. For such semigroups satisfying the preceding bounds,
the negative powers agree on $L^p\cap L^q$, and the positive powers agree
on their common domains.

\medskip

\textbf{3.~}We now specify the previous construction to the kinetic operator. Recall the operators introduced in Section~\ref{sec:geometry}:
\begin{equation}\label{eq:app-fp-kinetic-operator}
	\K=\partial_t+v\cdot\nabla_x-\Delta_v,
	\qquad \A=\lambda+\K,\quad \lambda>0,
	\qquad \A^*=\lambda-\partial_t-v\cdot\nabla_x-\Delta_v.
\end{equation}
The realization of $-\K$ on $L^p(\R^{1+2d})$ is the generator of the
strongly continuous contraction semigroup
\begin{equation}\label{eq:app-fp-kinetic-semigroup}
	(\mathsf P_\tau f)(t,x,v)
	=\int_{\R^{2d}}p_\tau(x,v;y,w)f(t-\tau,y,w)\,dy\,dw,
\end{equation}
where $p_\tau$ is the kernel in~\eqref{eq:heat-kernel}; thus
\eqref{eq:app-fp-kinetic-semigroup} is~\eqref{eq:spacetime-semigroup}.
Consequently
$e^{-\tau\A}=e^{-\lambda\tau}\mathsf P_\tau$, and the preceding statements
apply with $M=1$ and $\omega=\lambda$. We suppress the $L^p$ subscript on
the realizations of $\A$ and $\A^*$; in adjoint identities they act on the
dual exponents $p$ and $p'$. For $\gamma>0$,
\begin{equation}\label{eq:app-fp-kinetic-negative}
	\A^{-\gamma/2}f
	=\frac1{\Gamma(\gamma/2)}\int_0^\infty
	\tau^{\gamma/2-1}e^{-\lambda\tau}\mathsf P_\tau f\,d\tau,
	\qquad \|\A^{-\gamma/2}\|_{p\to p}\le\lambda^{-\gamma/2}.
\end{equation}
This recovers~\eqref{eq:fractional-power} and the corresponding bound
in~\eqref{eq:power-Lp}. The same formula and bound hold for $\A^*$ with $\mathsf P_\tau^*$.
In particular,
$\langle\A^{-\gamma/2}f,g\rangle
=\langle f,(\A^*)^{-\gamma/2}g\rangle$
for $f\in L^p$ and $g\in L^{p'}$. For positive power, as previously discussed, $\A^{\gamma/2}=(\A^{-\gamma/2})^{-1}$, $\gamma>0$.

The following lemma is needed to establish the compatibility with test functions used
in the weak formulation~\eqref{eq:weak-distribution} and in
Lemma~\ref{lem:factorization}.

\begin{lemma}[Compatibility with test functions]\label{lem:app-fp-test-functions}
	Let $\lambda>0$. For every $s\in\R$, the operators $\A^s$ and $(\A^*)^s$
	restrict to continuous automorphisms of $\cS(\R^{1+2d})$, with inverses
	$\A^{-s}$ and $(\A^*)^{-s}$, respectively. Their extensions to $\cS'$ by
	transposition satisfy
	\begin{equation}\label{eq:app-fp-transposition}
		\langle\A^su,\varphi\rangle=\langle u,(\A^*)^s\varphi\rangle,
		\qquad \A^s\A^r=\A^{s+r}\quad\text{on }\cS',\qquad r,s\in\R,
	\end{equation}
	and the analogous identities hold with $\A$ replaced by $\A^*$.
\end{lemma}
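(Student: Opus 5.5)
The natural route is the Fourier side on the group $\R^{1+2d}$ viewed as an abelian group for transforms. Writing $\widehat\varphi(\theta,\xi,\nu)$ for the Euclidean Fourier transform in $(t,x,v)$, the operator $\K$ becomes the first-order-in-$\nu$ operator $i\theta-\xi\cdot\nabla_\nu+|\nu|^2$. The semigroup $\mathsf P_\tau$ then acts by transport along the characteristics $\nu\mapsto\nu+\tau\xi$ with a multiplier. Explicitly, from \eqref{eq:spacetime-semigroup}--\eqref{eq:heat-kernel},
\[
\widehat{\mathsf P_\tau\varphi}(\theta,\xi,\nu)=e^{-i\tau\theta}\exp\Bigl(-\int_0^\tau|\nu+\sigma\xi|^2\,d\sigma\Bigr)\widehat\varphi(\theta,\xi,\nu+\tau\xi),
\]
up to sign conventions. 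My first step is to verify this formula and note that the multiplier is a smooth function of $(\tau,\theta,\xi,\nu)$ with all derivatives polynomially bounded in $(1+\tau)$ and the frequencies. I will also note that $\mathsf P_\tau$ maps $\cS$ to $\cS$, with every Schwartz seminorm of $\mathsf P_\tau\varphi$ bounded by $C(1+\tau)^{N}$ times a finite collection of seminorms of $\varphi$. This can also be read directly from the physical-side formula: the kernel is a Gaussian in $(y-x+\tau v,w-v)$ composed with the affine map $(x,v)\mapsto(x-\tau v,v)$ and a time shift, and derivatives and weights pass through with polynomial growth in $\tau$.

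Given this, the negative powers are handled through \eqref{eq:app-fp-kinetic-negative}. Because $e^{-\lambda\tau}$ beats any polynomial in $\tau$, the Bochner integral $\Gamma(s)^{-1}\int_0^\infty\tau^{s-1}e^{-\lambda\tau}\mathsf P_\tau\varphi\,d\tau$ converges in every Schwartz seminorm for $s>0$. Hence $\A^{-s}$ is continuous $\cS\to\cS$, and it agrees with the $L^p$ definition. For positive powers, I write $\A^{s}=\A^{m}\A^{-(m-s)}$ with an integer $m>s$. Here $\A^m=(\lambda+\K)^m$ is a differential operator with polynomial coefficients and therefore maps $\cS$ to $\cS$ continuously. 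The identity $\A^{-\beta}\A^{-\delta}=\A^{-(\beta+\delta)}$ from Appendix \ref{app:fractional-powers} transfers to $\cS$, and $\A\A^{-1}=I$ holds on $\cS$ (the resolvent identity for the generator, valid since $\cS\subset D(\A)$). Together these give the group law $\A^s\A^r=\A^{s+r}$ on $\cS$ for all real $r,s$, and therefore bijectivity with inverse $\A^{-s}$. Everything applies verbatim to $\A^*$ with $\mathsf P^*_\tau$.

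For the transposition identities I use the $L^2$ duality $\langle\A^{-\gamma}f,g\rangle=\langle f,(\A^*)^{-\gamma}g\rangle$, already recorded for $L^p$–$L^{p'}$, on Schwartz functions. For positive powers I combine it with integration by parts for $\A^m$. The extension to $u\in\cS'$ is then defined by $\langle\A^su,\varphi\rangle:=\langle u,(\A^*)^s\varphi\rangle$. This is a continuous extension because $(\A^*)^s$ is continuous on $\cS$, and it is consistent with the $L^p$ realization. The composition law on $\cS'$ follows by transposing the composition law for $\A^*$ on $\cS$.

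The main obstacle is the polynomial-in-$\tau$ control of the Schwartz seminorms of $\mathsf P_\tau\varphi$. The shear $x-\tau v$ produces factors $\tau^k$ when one commutes $\nabla_v$ or weights in $x$ through the semigroup, and the map is not a Fourier multiplier in the naive sense. I would handle this on the Fourier side, where $\widehat\varphi$ is evaluated at $\nu+\tau\xi$, so weights $|\nu|^k$ cost $(1+\tau|\xi|)^k$ and are absorbed by the rapid decay of $\widehat\varphi$. Derivatives of the Gaussian factor produce polynomials in $\tau,\xi,\nu$ times the same bounded exponential. The exponential decay $e^{-\lambda\tau}$ from $\lambda>0$ is exactly what closes the argument.
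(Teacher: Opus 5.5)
Your proposal is correct and follows the paper's proof essentially step for step. Both arguments bound the Schwartz seminorms of $\mathsf P_\tau\varphi$ and $\mathsf P_\tau^*\varphi$ polynomially in $\tau$, let $e^{-\lambda\tau}$ make the negative-power integral converge in $\cS$, and treat positive powers through $\A^s=\A^m\A^{-(m-s)}$ followed by transposition to $\cS'$. The only difference is that you check the seminorm bound on the Fourier side (transport $\nu\mapsto\nu+\tau\xi$ times a Gaussian multiplier), while the paper reads it off the physical-space Gaussian kernel via derivatives and Gaussian moments; either way works.
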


\begin{proof}
	Both $\mathsf P_\tau$ and $\mathsf P_\tau^*$ act continuously
	on $\cS(\R^{1+2d})$, and the Gaussian formula gives, for every Schwartz
	seminorm $q$, a seminorm $\widetilde q$ and constants $C,N$ such that
	\begin{equation}\label{eq:app-fp-schwartz}
		q(\mathsf P_\tau\varphi)+q(\mathsf P_\tau^*\varphi)
		\le C(1+\tau)^N\widetilde q(\varphi),\qquad \tau\ge0.
	\end{equation}
	Indeed, differentiation produces polynomial factors in $\tau$, while the
	polynomial weights are controlled by the corresponding Gaussian moments.
	The factor $e^{-\lambda\tau}$ therefore makes
	\eqref{eq:app-fp-kinetic-negative} convergent in $\cS$.
	For $s>0$, choosing an integer $m>s$ and using
	$\A^s=\A^m\A^{-(m-s)}$ shows that positive powers also preserve $\cS$.
	Together with the inverse identities, this proves the automorphism
	assertions on $\cS$. Transposition gives the extensions to $\cS'$.
	Then the power identities follow from those on $\cS$ for the adjoint powers.
\end{proof}

For the Bessel-potential spaces of Section~\ref{sec:spaces}, this gives,
for $\gamma>0$,
$\W_{\K}^{\gamma,p}=D(\A^{\gamma/2})=\A^{-\gamma/2}L^p$
with norm $\|\A^{\gamma/2}u\|_p$. With $\W_{\K}^{0,p}=L^p$ and the
negative-order spaces defined by completion in the norms
$\|\A^{\gamma/2}\cdot\|_p$, one obtains, for every $\gamma\in\R$,
\begin{equation}\label{eq:app-fp-scale}
	\A:\W_{\K}^{\gamma,p}\longrightarrow\W_{\K}^{\gamma-2,p}
	\quad\text{isometrically onto}
	\qquad
	(\W_{\K}^{\gamma,p})^*=\W_{\K^*}^{-\gamma,p'}.
\end{equation}
The duality is the distributional pairing in~\eqref{eq:Bessel-duality}.
These identities justify the fractional factorizations in
Lemma~\ref{lem:factorization} and Section~\ref{sec:main-results}.

\section{Proof of Proposition \ref{prop:kernels}}
\label{prop:kernels_proof}

\noindent
\emph{Step 1: The kernel of the fractional resolvent.}
Let $\zeta=(t,x,v)$ and $\xi=(s,y,w)$.  Formula
\eqref{eq:spacetime-semigroup} shows that the space--time kernel of
$\A^{-\gamma/2}$ vanishes for $t\le s$.  If $t>s$ and
$\tau:=t-s$, the integration variable in \eqref{eq:fractional-power} is
forced to equal $t-s$ by the time translation in
\eqref{eq:spacetime-semigroup}.  Consequently 
\[
\mathscr{A}^{-\gamma/2} f(\zeta)
=
\int_{\mathbb{R}^{1+2d}}
K_{\gamma,\lambda}(\zeta,\xi) f(\xi)\,d\xi,
\]
where the integral kernel is
\begin{equation}\label{eq:fractional-kernel-formula}
 K_{\gamma,\lambda}(\zeta,\xi)
 =\frac{\mathbf1_{\{t>s\}}}{\Gamma(\gamma/2)}
   \tau^{\gamma/2-1}e^{-\lambda\tau}
   p_\tau(x,v;y,w).
\end{equation}
For $t>s$, the definition \eqref{eq:d} reads
\begin{equation}\label{eq:distance-positive-time}
 R:=d(\zeta,\xi)
 =\max\left\{\sqrt\tau,\ \abs{w-v},\
       \abs{y-x+\tau v}^{1/3}\right\}.
\end{equation}
In particular, $R/\sqrt\tau\ge1$.  From
\eqref{eq:gaussian-upper} and $Q=4d+2$ we obtain
\begin{equation}\label{eq:kernel-before-distance}
 \begin{split}
 \abs{K_{\gamma,\lambda}(\zeta,\xi)}
 &\le C\tau^{\gamma/2-1-2d}
 \exp\left[-c\left(
  \frac{\abs{w-v}^2}{\tau}
 +\frac{\abs{y-x+\tau v}^2}{\tau^3}\right)\right]\\
 &=C\tau^{(\gamma-Q)/2}
 \exp\left[-c\left(
  \frac{\abs{w-v}^2}{\tau}
 +\frac{\abs{y-x+\tau v}^2}{\tau^3}\right)\right].
 \end{split}
\end{equation}
Here and below we have used $e^{-\lambda\tau}\le1$, which is why the
pointwise constants do not depend on $\lambda$.

We now compare the Gaussian exponent with the intrinsic distance.  If the
maximum in \eqref{eq:distance-positive-time} is attained by the velocity
term, then
$\abs{w-v}^2/\tau=R^2/\tau$.  If it is attained by the position term, then
\[
 \frac{\abs{y-x+\tau v}^2}{\tau^3}
 =\left(\frac{R}{\sqrt\tau}\right)^6
 \ge \frac{R^2}{\tau}.
\]
If it is attained by $\sqrt\tau$, then $R^2/\tau=1$.  After changing the
constants, all three cases are therefore summarized by
\begin{equation}\label{eq:gaussian-distance-reduction}
 \exp\left[-c\left(
  \frac{\abs{w-v}^2}{\tau}
 +\frac{\abs{y-x+\tau v}^2}{\tau^3}\right)\right]
 \le C\exp\left(-c'\frac{R^2}{\tau}\right).
\end{equation}
Set $u=R/\sqrt\tau\ge1$.  Combining
\eqref{eq:kernel-before-distance} and
\eqref{eq:gaussian-distance-reduction} gives
\begin{equation}\label{eq:kernel-scaling-reduction}
 \abs{K_{\gamma,\lambda}(\zeta,\xi)}
 \le C R^{\gamma-Q}u^{Q-\gamma}e^{-c'u^2}
 \le C_{d,\gamma}R^{\gamma-Q},
\end{equation}
because $Q-\gamma>0$ and
$\sup_{u\ge1}u^{Q-\gamma}e^{-c'u^2}<\infty$.  Finally,
$d\le d_*\le(1+2^{1/3})d$ by
\eqref{eq:reverse-comparable}--\eqref{eq:dstar}.  Since
$\gamma-Q<0$, replacing $R=d(\zeta,\xi)$ by the equivalent metric $d_*$
changes only the constant.  This proves the estimate for
$\A^{-\gamma/2}$ in \eqref{eq:fractional-kernel-bound}.

The kernel of $(\A^*)^{-\gamma/2}$ is
$K_{\gamma,\lambda}^*(\zeta,\xi)
=K_{\gamma,\lambda}(\xi,\zeta)$.  Since $d_*$ is symmetric, the same bound
proves the adjoint part of \eqref{eq:fractional-kernel-bound}.

\smallskip
\noindent
\emph{Step 2: Differentiated kernels.}
Differentiating the explicit quadratic form in
\eqref{eq:heat-kernel} shows that
\begin{equation}\label{eq:two-velocity-derivatives}
 \abs{\nabla_v p_\tau(x,v;y,w)}
 +\abs{\nabla_w p_\tau(x,v;y,w)}
 \le C\tau^{-1/2}\overline p_\tau(x,v;y,w),
\end{equation}
where $\overline p_\tau$ satisfies the same estimate as
\eqref{eq:gaussian-upper}, with a smaller positive constant in the
exponential.  To see the power $\tau^{-1/2}$ directly, differentiation of
the exponent produces terms bounded by
\[
 C\left(
  \frac{\abs{w-v}}{\tau}
 +\frac{\abs{y-x+\tau v}}{\tau^2}
 \right)p_\tau.
\]
Each normalized linear factor can be absorbed into a slightly wider
Gaussian by the elementary inequality
$a e^{-ca^2}\le C_c e^{-ca^2/2}$.  The derivative with respect to $w$ is
handled in exactly the same way.  This proves
\eqref{eq:two-velocity-derivatives}.

For test functions, differentiation under the integral is justified by
\eqref{eq:two-velocity-derivatives}.  The kernel of
$\nabla_v\A^{-(1+\gamma)/2}$ is therefore bounded by
\[
 C\tau^{(1+\gamma)/2-1}\tau^{-1/2}\tau^{-2d}
 \exp\left[-c\left(
  \frac{\abs{w-v}^2}{\tau}
 +\frac{\abs{y-x+\tau v}^2}{\tau^3}\right)\right].
\]
The power of $\tau$ is again
$(1+\gamma)/2-1-1/2-2d=(\gamma-Q)/2$.  Thus the argument in
\eqref{eq:gaussian-distance-reduction}--
\eqref{eq:kernel-scaling-reduction} applies without change and proves the
first term in \eqref{eq:gradient-fractional-kernel-bound}.  The adjoint
kernel uses the $w$-derivative in
\eqref{eq:two-velocity-derivatives}; symmetry of $d_*$ then gives the second
term.

\smallskip
\noindent
\emph{Step 3: $L^r$ bounds.}
Both $\mathsf P_\tau$ and its adjoint are contractions on $L^r$.  Hence
\[
 \begin{split}
 \norm{\A^{-\gamma/2}f}_r
 &\le\frac{\norm f_r}{\Gamma(\gamma/2)}
   \int_0^\infty\tau^{\gamma/2-1}e^{-\lambda\tau}\,d\tau
 =\lambda^{-\gamma/2}\norm f_r.
 \end{split}
\]
The identical calculation for $\A^*$ proves
\eqref{eq:power-Lp}.

The broadened Gaussian in \eqref{eq:two-velocity-derivatives} has uniformly
bounded mass. Therefore,
\begin{equation}\label{eq:gradient-semigroup-Lr}
 \norm{\nabla_v\mathsf P_\tau f}_r
 +\norm{\nabla_v\mathsf P_\tau^*f}_r
 \le C_{d,r}\tau^{-1/2}\norm f_r.
\end{equation}
Initially this is proved for smooth compactly supported functions and extended to $L^r$ using a standard density argument.  Using
\eqref{eq:gradient-semigroup-Lr} in the fractional power formula gives
\[
 \begin{split}
 \norm{\nabla_v\A^{-(1+\gamma)/2}f}_r
 &\le
 \frac{C_{d,r}}{\Gamma((1+\gamma)/2)}
 \int_0^\infty
 \tau^{(1+\gamma)/2-1}\tau^{-1/2}e^{-\lambda\tau}\,d\tau\,\norm f_r\\
 &=C_{d,r,\gamma}\lambda^{-\gamma/2}\norm f_r.
 \end{split}
\]
The same calculation for $\A^*$ completes the proof of
\eqref{eq:gradient-power-Lp}. \hfill \qed

\bigskip

\section{Proof of Proposition \ref{prop:sobolev}}
\label{prop:sobolev_proof}

We give the proof for $\A$ and then indicate why it also applies to
$\A^*$.  We will only need the doubling property, the volume estimate
\eqref{eq:ball-volume} and the kernel estimates of
Proposition \ref{prop:kernels}.

\smallskip
\noindent
\emph{Step 1: Hardy--Littlewood--Sobolev estimate.}
Let $\mathsf M$ be the centered Hardy--Littlewood maximal operator associated
with the metric $d_*$ and Lebesgue measure.  Since the space is doubling,
$\mathsf M$ is bounded on $L^r$ for every $1<r<\infty$, see, for example,
\cite[Chapter~2]{Heinonen}.  We recall the
standard argument that converts this maximal estimate into the fractional
integration estimate needed here.

Fix $\zeta\in\R^{1+2d}$ and $R>0$.  Decompose the integral defining
$I_\gamma f(\zeta)$ into the regions
$d_*(\zeta,\xi)<R$ and $d_*(\zeta,\xi)\ge R$.  A decomposition of the first
region into the annuli
\[
 2^{-k-1}R\le d_*(\zeta,\xi)<2^{-k}R,\qquad k\ge0,
\]
together with \eqref{eq:ball-volume}, gives
\begin{equation}\label{eq:HLS-near}
 \begin{split}
 \int_{\{d_*(\zeta,\xi)<R\}}
 \frac{\abs{f(\xi)}}{d_*(\zeta,\xi)^{Q-\gamma}}\,d\xi
 &\le C\sum_{k=0}^\infty
 (2^{-k}R)^{\gamma-Q}
 \int_{B_{2^{-k}R}(\zeta)}\abs f\,d\xi\\
 &\le C R^\gamma\mathsf Mf(\zeta).
 \end{split}
\end{equation}
For the outer region, H\"older's inequality gives
\[
 \int_{\{d_*(\zeta,\xi)\ge R\}}
 \frac{\abs{f(\xi)}}{d_*(\zeta,\xi)^{Q-\gamma}}\,d\xi
 \le\norm f_r
 \left(\int_{\{d_*(\zeta,\xi)\ge R\}}
 d_*(\zeta,\xi)^{(\gamma-Q)r'}\,d\xi\right)^{1/r'}.
\]
Decomposing into the annuli
$2^kR\le d_*(\zeta,\xi)<2^{k+1}R$ and again using
\eqref{eq:ball-volume}, we find
\begin{equation}\label{eq:HLS-far}
 \int_{\{d_*(\zeta,\xi)\ge R\}}
 \frac{\abs{f(\xi)}}{d_*(\zeta,\xi)^{Q-\gamma}}\,d\xi
 \le C\norm f_r R^{\gamma-Q/r}.
\end{equation}
The geometric series used here converges precisely when
$\gamma<Q/r$.

At points where $\mathsf Mf(\zeta)>0$, choose
\[
 R=\left(\frac{\norm f_r}{\mathsf Mf(\zeta)}\right)^{r/Q}.
\]
The cases $\mathsf Mf(\zeta)=0$ and $f=0$ are immediate.  Equations
\eqref{eq:HLS-near}--\eqref{eq:HLS-far} yield the pointwise Hedberg
inequality
\begin{equation}\label{eq:HLS-Hedberg}
 I_\gamma f(\zeta)
 \le C\norm f_r^{\gamma r/Q}
       \bigl(\mathsf Mf(\zeta)\bigr)^{1-\gamma r/Q}.
\end{equation}
The relation $1/s=1/r-\gamma/Q$ is equivalent to
\[
 s\left(1-\frac{\gamma r}{Q}\right)=r.
\]
Taking the $L^s$ norm in \eqref{eq:HLS-Hedberg} and using the
$L^r$ boundedness of $\mathsf M$ therefore gives
\begin{equation*}
 \norm{I_\gamma f}_s\le C_{d,\gamma,r}\norm f_r.
\end{equation*}
The pointwise estimates \eqref{eq:power-by-I} now imply
\eqref{eq:HLS} for both $\A$ and $\A^*$. 

\smallskip
\noindent
\emph{Step 2: Uniform $L^{r'}$ bound for the kernel.}
Assume now that $\gamma>Q/r$.  We first verify explicitly the kernel
integrability used for point evaluation.  From
\eqref{eq:fractional-kernel-formula}, for fixed
$\zeta=(t,x,v)$,
\[
 \int_{\R^{1+2d}}
 \abs{K_{\gamma,\lambda}(\zeta,\xi)}^{r'}\,d\xi
 =C_{\gamma,r}\int_0^\infty
 \tau^{(\gamma/2-1)r'}e^{-\lambda r'\tau}
 \left(\int_{\R^{2d}}p_\tau(x,v;y,w)^{r'}\,dy\,dw\right)d\tau.
\]
The change of variables
\[
 a=\frac{w-v}{\sqrt\tau},\qquad
 c=\frac{y-x+\tau v}{\tau^{3/2}}
\]
has Jacobian $\tau^{2d}$.  The explicit Gaussian formula
\eqref{eq:heat-kernel} consequently gives
\begin{equation*}
 \int_{\R^{2d}}p_\tau(x,v;y,w)^{r'}\,dy\,dw
 =C_{d,r}\tau^{-2d(r'-1)}.
\end{equation*}
It follows that
\begin{equation*}
 \int\abs{K_{\gamma,\lambda}(\zeta,\xi)}^{r'}\,d\xi
 =C\int_0^\infty
 \tau^{((\gamma-Q)r'+Q)/2-1}e^{-\lambda r'\tau}\,d\tau.
\end{equation*}
The integral is finite near zero exactly when
\[
 (\gamma-Q)r'+Q>0,
\]
which is equivalent to $\gamma>Q/r$; it is finite at infinity because
$\lambda>0$.  The Gamma-integral also shows that
\begin{equation*}
 \sup_{\zeta\in\R^{1+2d}}
 \norm{K_{\gamma,\lambda}(\zeta,\cdot)}_{r'}
 \le C_{d,\gamma,r}
       \lambda^{-(\gamma-Q/r)/2}.
\end{equation*}
The adjoint kernel is obtained by interchanging $\zeta$ and $\xi$, and
Lebesgue measure is both left and right invariant.  Thus the same estimate
holds for its kernel sections.  H\"older's inequality now gives the
$L^r$-to-$L^\infty$ bound
\begin{equation}\label{eq:Bessel-Linfty}
 \norm{\A^{-\gamma/2}f}_\infty
 +\norm{(\A^*)^{-\gamma/2}f}_\infty
 \le C_{d,\gamma,r}\lambda^{-(\gamma-Q/r)/2}\norm f_r.
\end{equation}

\smallskip
\noindent
\emph{Step 3: Continuity.}
The operator $\A^{-\gamma/2}$ is left invariant, so its kernel sections are
left translates of one fixed function in $L^{r'}$.  Left translations are
strongly continuous on $L^{r'}$ for $1\le r'<\infty$: this follows first
for continuous compactly supported functions and then for all $L^{r'}$
functions by density and invariance of Haar measure.  Hence, if
$\zeta_n\rightarrow\zeta$,
\[
 \norm{K_{\gamma,\lambda}(\zeta_n,\cdot)
       -K_{\gamma,\lambda}(\zeta,\cdot)}_{r'}\rightarrow0.
\]
By H\"older's inequality,
\[
 \abs{\A^{-\gamma/2}f(\zeta_n)
       -\A^{-\gamma/2}f(\zeta)}
 \le\norm{K_{\gamma,\lambda}(\zeta_n,\cdot)
       -K_{\gamma,\lambda}(\zeta,\cdot)}_{r'}\norm f_r,
\]
which proves continuity.  The same reasoning applies to $\A^*$.  More explicitly, if
$K(\zeta,\xi)=k(\zeta^{-1}\circ\xi)$, then
$K^*(\zeta,\xi)=k^\vee(\zeta^{-1}\circ\xi)$ with
$k^\vee(\eta)=k(\eta^{-1})$.  Inversion preserves Haar measure on this
unimodular group, so $k^\vee\in L^{r'}$ and its left translates are
strongly continuous as well.

\smallskip
\noindent
\emph{Step 4: Vanishing at infinity.}
Let first $f\in C_c^\infty$ and let $E=\supp f$.  Since $d_*$ is a proper
metric compatible with the Euclidean topology, the triangle inequality
implies
\[
 \inf_{\xi\in E}d_*(\zeta,\xi)\rightarrow\infty
 \qquad\text{as }\zeta\rightarrow\infty.
\]
Because $\gamma<Q$, \eqref{eq:fractional-kernel-bound} gives
\[
 \abs{\A^{-\gamma/2}f(\zeta)}
 \le C\norm f_1
 \left(\inf_{\xi\in E}d_*(\zeta,\xi)\right)^{\gamma-Q}
 \rightarrow0.
\]
The same argument applies to $\A^*$.  For a general $f\in L^r$, choose
$f_n\in C_c^\infty$ with $f_n\rightarrow f$ in $L^r$.  Estimate
\eqref{eq:Bessel-Linfty} shows that the corresponding potentials converge
uniformly.  As a uniform limit of functions in $C_\infty$, each limiting
potential belongs to $C_\infty$.  This proves part~\textup{(ii)}. \hfill \qed

\bigskip

\section{Many-particle and Lebesgue class drifts}

\begin{proposition}[Morrey norm for pairwise interaction kernels]
\label{prop:many-particle}
Let $N\ge2$, $d_0\ge2$, and $d=Nd_0$. Write
$x=(x^1,\ldots,x^N)$ and $v=(v^1,\ldots,v^N)$, with
$x^i,v^i\in\mathbb R^{d_0}$ and suppose that $b(t,x,v)=b(x,v)$ with
\[
 b_i(x,v)
 =\sum_{j\ne i}K_{ij}(x^i-x^j,v^i-v^j),
 \qquad b=(b_1,\ldots,b_N),
\]
where the measurable kernels satisfy, almost everywhere,
\[
 |K_{ij}(\xi,\nu)|
 \le \kappa_{ij}|\xi|^{-1/3}+\gamma_{ij}|\nu|^{-1},
 \qquad \kappa_{ij}, \gamma_{ij}\ge0.
\]
Then, for every $1<q<d_0$,
\[
 \|b\|_{\mathcal M_{\mathcal K}^q}
 \le C_{N,d_0,q}
       \sum_{i\ne j}(\kappa_{ij}+\gamma_{ij}).
\]
If all $\gamma_{ij}$ vanish, the same conclusion holds for
$1<q<3d_0$.
\end{proposition}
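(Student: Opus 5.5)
By the triangle inequality in $L^q(B_r(\zeta))$ the Morrey norm is subadditive, so it suffices to bound a single term. For a fixed pair $i\ne j$ we show
\[
 \bigl\||x^i-x^j|^{-1/3}\bigr\|_{\M^q_\K}\le C_{d_0,q},
 \qquad
 \bigl\||v^i-v^j|^{-1}\bigr\|_{\M^q_\K}\le C_{d_0,q}.
\]
Summing over the finitely many pairs (and over the $N$ components of $b$) then gives the constant $C_{N,d_0,q}$. As in the proof of Proposition~\ref{prop:hardy-nonexistence}, we take a ball $B_r(\zeta)$ with $\zeta=(t,x,v)$ and use that every $(s,y,w)$ in it satisfies $|s-t|<r^2$, $|w-v|<r$ and $|y-x-(s-t)v|<r^3$. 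We integrate in a convenient order and aim for $\int_{B_r(\zeta)}|g|^q\le C r^{Q-q}$. Combined with \eqref{eq:ball-volume}, this is exactly the Morrey bound.

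\emph{Velocity term.} Write $w=(w^1,\dots,w^N)$. For fixed $s,y$ and the other $w$-blocks, the constraint $|w-v|<r$ confines $w^i-w^j$, viewed as a function of $w^i$, to a ball of radius $r$ in $\R^{d_0}$. The one-dimensional-in-spirit estimate from the proof of Proposition~\ref{prop:hardy-nonexistence},
\[
\int_{|u-c|<r}|u-e|^{-q}\,du\le C_{d_0,q}r^{d_0-q},
\]
holds uniformly in $c,e$ when $q<d_0$. Apply it in the variable $w^i$ with $w^j$ frozen. The remaining $d-d_0$ velocity coordinates contribute $r^{d-d_0}$, and $y$ and $s$ contribute $r^{3d}r^2$. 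This gives $C r^{2+3d+d-q}=Cr^{Q-q}$.

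\emph{Position term.} This is the only step needing care, because the $y$-constraint is the sheared one, $|y-x-(s-t)v|<r^3$. For fixed $s$ it is still a Euclidean ball of radius $r^3$ in $\R^d$, just centred at $x+(s-t)v$. We therefore integrate in $y^i$ first, with $y^j$ frozen. The same lemma at scale $r^3$ and exponent $q/3$ gives
\[
\int_{|y^i-c|<r^3}|y^i-y^j|^{-q/3}\,dy^i\le C r^{3d_0-q},
\]
valid for $q/3<d_0$, i.e.\ $q<3d_0$. The remaining position coordinates contribute $r^{3(d-d_0)}$, $w$ contributes $r^{d}$ and $s$ contributes $r^2$. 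The total is again $Cr^{Q-q}$.

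Hence both single-pair terms lie in $\M^q_\K$ for $1<q<d_0$. When all $\gamma_{ij}=0$ only position terms remain, and those allow the wider range $1<q<3d_0$. The bound on $|K_{ij}|$ yields the stated estimate, with constants independent of $\zeta$ and $r$. The three single examples from Section~\ref{sec:intro} follow in the same way. For $|t-t_0|^{-1/2}$, the time integral $\int_{|s-t|<r^2}|s-t_0|^{-q/2}ds\le Cr^{2-q}$ holds for $q<2$. For $v/|v|^2$, use $d$ in place of $d_0$ in the velocity computation. For $|x-x_0|^{-1/3}$, use $d$ in place of $d_0$ in the position computation.
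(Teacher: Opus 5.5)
Your proof is correct and follows the paper's argument. You reduce by subadditivity of the Morrey norm to a single pair, and integrate first in the $i$-th block with the $j$-th block frozen, using the translation-uniform estimate $\int_{|u-c|<R}|u-e|^{-\beta}\,du\le C R^{d_0-\beta}$ for $\beta<d_0$ (with $R=r^3$, $\beta=q/3$ for positions and $R=r$, $\beta=q$ for velocities); counting powers then gives $Cr^{Q-q}$. The only cosmetic difference is that you work directly on $d_*$-balls via their inclusion in the sheared box, whereas the paper works on kinetic cylinders $\mathcal C_r$ and then invokes the equivalence of the two Morrey norms.
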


\begin{proof}
We first recall the elementary estimate
\[
 \sup_{a\in\mathbb R^{d_0}}
 \int_{|y-a|<R}|y|^{-\beta}\,dy
 \le C_{d_0,\beta}R^{d_0-\beta},
 \qquad 0<\beta<d_0.
\]
Indeed, on the set $\{|y|<R\}$ one integrates in  spherical coordinates.
On its complement the integrand is at most $R^{-\beta}$,
while the integration domain has volume at most $C R^{d_0}$.

Consider a kinetic cylinder $\mathcal C_r:=\mathcal C_r(t_0,x_0,v_0)$  as in \eqref{eq:backward-cylinder}.
Its time interval has length $r^2$, and each time slice is
contained in the product of the sets
\[
 |x^k-x_0^k-(t-t_0)v_0^k|<r^3,
 \qquad |v^k-v_0^k|<r,
 \qquad k=1,\ldots,N.
\]
Fix $i\ne j$. Integrating first in $x^i$, with all other
variables fixed, and applying the preceding estimate with
$\beta=q/3$ gives
\[
 \begin{aligned}
 \int_{\mathcal C_r}|x^i-x^j|^{-q/3}\,dt\,dx\,dv
 &\le C r^2 r^d r^{3(d-d_0)}r^{3d_0-q}\\
 &=C r^{4d+2-q},
 \qquad q<3d_0.
 \end{aligned}
\]
Similarly, integrating first in $v^i$ gives
\[
 \begin{aligned}
 \int_{\mathcal C_r}|v^i-v^j|^{-q}\,dt\,dx\,dv
 &\le C r^2 r^{3d}r^{d-d_0}r^{d_0-q}\\
 &=C r^{4d+2-q},
 \qquad q<d_0.
 \end{aligned}
\]
Both estimates are uniform in the cylinder center $(t_0,x_0,v_0)$:
the elementary integral estimate is uniform under translations,
including the time-dependent spatial shifts $x_0+(t-t_0)v_0$.

Since $|b|\le\sum_{i\ne j}|K_{ij}|$, we have
\[
 \|b\|_{L^q(\mathcal C_r)}
 \le C\sum_{i\ne j}(\kappa_{ij}+\gamma_{ij})
          r^{(4d+2)/q-1}.
\]
Dividing by $|\mathcal C_r|^{1/q}$, multiplying by $r$,
and taking the supremum proves the assertion since
$|\mathcal C_r| =c r^{4d+2}$ and the definitions of the Morrey norm using kinetic cylinders and metric balls are equivalent.
\end{proof}

\begin{proposition}
\label{prop:lebesgue-class}
Let $p_t,p_v,p_x\in(1,\infty]$ satisfy
\[
 \frac{2}{p_t}+\frac{d}{p_v}+\frac{3d}{p_x}=1.
\]
If
\[
 |b|\in L^{p_t}_tL^{p_v}_vL^{p_x}_x:=L^{p_t}\bigl(\mathbb R_t,
       L^{p_v}(\mathbb R_v^d, L^{p_x}(\mathbb R_x^d))\bigr),
\]
then, for every $1<q\le\min\{p_t,p_v,p_x\}$,
\[
 \|b\|_{\mathcal M_{\mathcal K}^q}
 \le C\|b\|_{L^{p_t}_tL^{p_v}_vL^{p_x}_x}.
\]
The same assertion holds with the integration order of the $x$ and $v$
norms reversed.
\end{proposition}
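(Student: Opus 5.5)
By the equivalence of the ball and cylinder definitions of $\M^q_{\K}$ recalled in Section~\ref{sec:geometry}, it suffices to bound
\[
r\Bigl(\abs{\cC_r}^{-1}\int_{\cC_r}\abs b^q\Bigr)^{1/q}
\]
uniformly over kinetic cylinders $\cC_r=\cC_r(t_0,x_0,v_0)$, with $\abs{\cC_r}=c\,r^Q$.

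The plan is a single application of the mixed-norm H\"older inequality on the cylinder. Fix $q\le\min\{p_t,p_v,p_x\}$ and write $\abs b^q\mathbf 1_{\cC_r}$. For fixed $t\in(t_0-r^2,t_0)$ the time slice of $\cC_r$ is the product set
\[
\{\abs{v-v_0}<r\}\times\{\abs{x-x_0-(t-t_0)v_0}<r^3\}.
\]
This is a product of a $v$-ball and an $x$-ball whose center depends only on $t$, not on $v$. That independence is the key structural point, because it lets the innermost $L^{p_x}$ integration run over a ball of fixed radius. The $x$-ball is a translate of $\{\abs x<r^3\}$, and $L^{p_x}_x$ norms are translation invariant.

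The three H\"older steps go from the inside out.

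\begin{itemize}
\item In $x$, with exponent $p_x/q\ge1$:
\[
\int_{\abs{x-c(t)}<r^3}\abs b^q\,dx\le \norm{b(t,\cdot,v)}_{L^{p_x}}^q\,(r^{3d})^{1-q/p_x}.
\]
\item In $v$ over $\{\abs{v-v_0}<r\}$, with exponent $p_v/q$, we gain the factor $(r^d)^{1-q/p_v}$ and get $\norm{b(t)}_{L^{p_v}_vL^{p_x}_x}^q$.
\item In $t$ over an interval of length $r^2$, we gain $(r^2)^{1-q/p_t}$ and get $\norm b^q_{L^{p_t}_tL^{p_v}_vL^{p_x}_x}$.
\end{itemize}

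Multiplying these factors gives
\[
\int_{\cC_r}\abs b^q\le \norm b^q\, r^{Q-q\left(\frac{2}{p_t}+\frac{d}{p_v}+\frac{3d}{p_x}\right)}=\norm b^q\,r^{Q-q},
\]
using the scaling relation. Dividing by $\abs{\cC_r}\sim r^Q$, taking the $q$-th root and multiplying by $r$ yields exactly $C\norm b$, uniformly in the center and in $r$. The infinite exponents are the trivial case of H\"older, and local $L^q$ integrability follows from the same bound.

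For the reversed order $L^{p_t}_tL^{p_x}_xL^{p_v}_v$, we integrate first in $v$ over $\{\abs{v-v_0}<r\}$ and then in $x$. For fixed $t$ the slice is still a product set whose $x$-ball center does not depend on $v$, so Fubini and the same three H\"older steps apply verbatim.

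The only potential obstacle is the sheared geometry of the cylinder, namely the $(s-t)v$ term in the $x$ constraint. If a ball centered at a general point were used, the $x$-center would depend on $v$. Working with the backward cylinders $\cC_r$, where the shear involves the fixed $v_0$, removes this difficulty entirely. Everything else is routine bookkeeping of the exponents.
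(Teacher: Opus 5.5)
Your proposal is correct and follows essentially the same argument as the paper: successive H\"older inequalities in $x$, $v$, $t$ over the backward cylinder $\cC_r$, with the scaling relation turning the total power into $r^{Q-q}$, and the same reordering for the reversed mixed norm. Your explicit observation that each time slice of $\cC_r$ is a product set, with an $x$-center depending only on $t$, makes precise the Fubini step that the paper uses without comment.
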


\begin{proof}
	Let $\mathcal C_r$ be as in \eqref{eq:backward-cylinder}.
Applying H\"older's inequality
successively in $x$, $v$, and $t$, we obtain, for $q\in(1,\min\{p_t,p_v,p_x\}]$,
\[
 \|b\|_{L^q(\mathcal C_r)}
 \le
 C r^{3d(1/q-1/p_x)}
   r^{d(1/q-1/p_v)}
   r^{2(1/q-1/p_t)}
   \|b\|_{L^{p_t}_tL^{p_v}_vL^{p_x}_x}.
\]
By the assumed relation between
the exponents, the total power of $r$ is given by
\[
 \frac{4d+2}{q}
 -\frac{3d}{p_x}-\frac{d}{p_v}-\frac{2}{p_t}
 =\frac{4d+2}{q}-1.
\]
Since $|\mathcal C_r| = cr^{4d+2}$, it follows that
\[
 r\left(\frac{1}{|\mathcal C_r|}
              \int_{\mathcal C_r}|b|^q\right)^{1/q}
 \le C\|b\|_{L^{p_t}_tL^{p_v}_vL^{p_x}_x}.
\]
Taking the supremum over all centers and radii proves
the claim. 

One can finally note that, H\"older's inequality (applied to $v$, $x$ and $t$) also yields
\[
r\left(\frac{1}{|\mathcal C_r|}
\int_{\mathcal C_r}|b|^q\right)^{1/q}
\le C\|b\|_{L^{p_t}_tL^{p_x}_xL^{p_v}_v},
\]
and conclude the estimate $\|b\|_{\mathcal M_{\mathcal K}^q}
\le C
\|b\|_{L^{p_t}_tL^{p_x}_xL^{p_v}_v}$.
\end{proof}

\bigskip

\makeatletter
\let\savedtocwrite\@tocwrite
\let\@tocwrite\@gobbletwo
\section*{AI usage}
\let\@tocwrite\savedtocwrite
\makeatother

The authors used OpenAI ChatGPT 5.6 Sol and 6 Astra for light editorial assistance (namely, grammar and misprints checks, fixing typos, provide style suggestions), and also for  light assistance in carrying out additional literature surveys on pages 5 and 6. 
The motivations, key ideas and all results and proof arguments are the sole outcomes of the authors' works.

\end{document}